\newtheorem{thm}{Theorem}[section]
\newtheorem{cor}[thm]{Corollary}
\newtheorem{lem}[thm]{Lemma}
\theoremstyle{definition}
\newtheorem{de}[thm]{Definition}
\newtheorem{exa}[thm]{Example}
\newtheorem{re}[thm]{Remark}
\numberwithin{equation}{section}
\newcommand{\N}{\mathbb{N}}
\newcommand{\norm}[1]{\left\| #1 \right\|}
\newcommand{\Z}{\mathbb{Z}}
\journal{}%Journal of Functional Analysis
\begin{document}

\begin{frontmatter}

%% Title, authors and addresses

%% use the tnoteref command within \title for footnotes;
%% use the tnotetext command for theassociated footnote;
%% use the fnref command within \author or \affiliation for footnotes;
%% use the fntext command for theassociated footnote;
%% use the corref command within \author for corresponding author footnotes;
%% use the cortext command for theassociated footnote;
%% use the ead command for the email address,
%% and the form \ead[url] for the home page:
%% \title{Title\tnoteref{label1}}
%% \tnotetext[label1]{}
%% \author{Name\corref{cor1}\fnref{label2}}
%% \ead{email address}
%% \ead[url]{home page}
%% \fntext[label2]{}
%% \cortext[cor1]{}
%% \affiliation{organization={},
%%             addressline={},
%%             city={},
%%             postcode={},
%%             state={},
%%             country={}}
%% \fntext[label3]{}

\title{Approximation properties and quantitative estimation for uniform ball-covering
property of operator spaces} %% Article title

%% use optional labels to link authors explicitly to addresses:
%% \author[label1,label2]{}
%% \affiliation[label1]{organization={},
%%             addressline={},
%%             city={},
%%             postcode={},
%%             state={},
%%             country={}}
%%
%% \affiliation[label2]{organization={},
%%             addressline={},
%%             city={},
%%             postcode={},
%%             state={},
%%             country={}}

\author[label1]{Qiyao Bao} %% Author name
\ead{qybao@mail.nankai.edu.cn}
\author[label1]{Rui Liu}
\ead{ruiliu@nankai.edu.cn}
\author[label1]{Jie Shen\corref{cor1}}
\cortext[cor1]{Corresponding author}
\ead{1710064@mail.nankai.edu.cn}
%% Author affiliation
\affiliation[label1]{organization={School of Mathematical Sciences and LPMC
},%Department and Organization
            addressline={Nankai University},
            city={Tianjin},
            postcode={300071},
            %state={},
            country={China}}

\tnotetext[label1]{This work was supported by the National Natural Science Foundation of China (Grant Nos. 11971348, 12071230 and 12471131).}

%% Abstract
\begin{abstract}
In this paper, by dilation technique on Schauder frames, we extend Godefroy and Kalton's approximation theorem (1997), and obtain that a separable Banach space has the $\lambda$-unconditional bounded approximation property ($\lambda$-UBAP) if and only if, for any $\varepsilon>0$, it can be embeded into a $(\lambda+\varepsilon)$-complemented subspace of a Banach space with an $1$-unconditional finite-dimensional decomposition ($1$-UFDD). As applications on ball-covering property (BCP) (Cheng, 2006) of operator spaces, also based on the relationship between the $\lambda$-UBAP and block unconditional Schauder frames, we prove that if $X^\ast$, $Y$ are separable and (1) $X$ or $Y$ has the $\lambda$-reverse metric approximation property ($\lambda$-RMAP) for some $\lambda>1$; or (2) $X$ or $Y$ has an approximating sequence $\{S_n\}_{n=1}^\infty$ such that $\lim_n\|id-2S_n\| < 3/2$, then the space of bounded linear operators $B(X,Y)$ has the uniform ball-covering property (UBCP). Actually, we give uniformly quantitative estimation for the renormed spaces. We show that if $X^\ast$, $Y$ are separable and $X$ or $Y$ has the $(2-\varepsilon)$-UBAP for any $\varepsilon>0$, then for all $1-\varepsilon/2 < \alpha \leq 1$, the renormed space $Z_\alpha=(B(X,Y),\|\cdot\|_\alpha)$ has the $(2\alpha, 2\alpha+\varepsilon-2-\sigma)$-UBCP for all $0 <\sigma < 2\alpha+\varepsilon-2$. Furthermore, we point out the connections between the UBCP, u-ideals and the ball intersection property (BIP).
\end{abstract}

%%Graphical abstract
%\begin{graphicalabstract}
%\includegraphics{grabs}
%\end{graphicalabstract}

%%Research highlights
%\begin{highlights}
%\item Research highlight 1
%\item Research highlight 2
%\end{highlights}

%% Keywords
\begin{keyword}
%% keywords here, in the form: keyword \sep keyword
Unconditional bounded approximation property \sep Reverse metric approximation property \sep Block unconditional Schauder frames \sep Unconditional finite-dimensional decomposition \sep Uniform ball-covering property
%% PACS codes here, in the form: \PACS code \sep code
%% MSC codes here, in the form: \MSC code \sep code
\MSC[2020] 46B28  \sep 46B15  \sep 46B20
%% or \MSC[2008] code \sep code (2000 is the default)
\end{keyword}

\end{frontmatter}

%% Add \usepackage{lineno} before \begin{document} and uncomment
%% following line to enable line numbers
%% \linenumbers

%% main text
%%

%% Use \section commands to start a section
\section{Introduction}\label{sec1}

The approximation property which already appeared in Banach's book in 1932 \cite{B1932}, plays a fundamental role in the structure theory of Banach spaces. In 1955, Grothendieck \cite{G1955} initiated the first systematic study of the variants of the approximation property. At that time the main properties were the approximation property, the bounded approximation property and the basis property. Grothendieck's theorem states that for reflexive Banach spaces, the bounded approximation property is equivalent to the approximation property. In 1973, Enflo \cite{E1973} constructed a counterexample which shows that there exists a separable reflexive Banach space without the approximation property. Therefore, there exists a separable Banach space without a Schauder basis. In 1987, Szarek \cite{S1987} provided a very complex example, demonstrating that there exists a separable superreflexive Banach space with the bounded approximation property but without a Schauder basis. Now there is a ``chain" of approximation properties in between the standard ones \cite{C2001}, including the commuting bounded approximation property, the $\pi$-property and the finite-dimensional decomposition property and so on. There is a rich theory surrounding the approximation property for Banach spaces. What one concerns is whether a given Banach space with the weakest possible form of the approximation property actually has the strongest possible form. For example, Pełczyński \cite{P1971} and independently Johnson, Rosenthal and Zippin \cite{JRZ1971} proved that a separable Banach space has the bounded approximation property if and only if it is a complemented subspace of a Banach space with a Schauder basis.

Frames for Banach spaces have important relations with various forms of the Banach space approximation properties and the dilation theory of Banach spaces \cite{BHLS2025,ADV2024,MAMS2014,JFA2014,JFA2018}. In \cite{CHL}, Casazza, Han and Larson generalized the concepts of Hilbert frames and Schauder bases using dilation techniques and introduced Schauder frames in separable Banach spaces. They proved that a separable Banach space has a Schauder frame if and only if it has the bounded approximation property. There are plenty of investigations concerning (unconditional) Schauder frames \cite{BF2024,BFL2015,ST2008,ST2014,LT2025,L2010,LZ2010,OSS2011} which are effective tool for the study of Banach space theory. Based on the connection between the bounded approximation property and Schauder frames, Beanland et al. \cite{BFL2015} showed that if a reflexive Banach space has a Schauder frame, then it can be complementably embedded into a reflexive space with a Schauder basis. Moreover, Liu and Ruan \cite{LR2016} introduced the concept of completely bounded frames for operator spaces, and proved that a separable operator space has a completely bounded frame if and only if it has the completely bounded approximation property if and only if it is completely isomorphic to a completely complemented subspace of an operator space with a completely bounded basis. In this paper, we present the relationship between the $\lambda$-unconditional bounded approximation property ($\lambda$-UBAP) and block unconditional Schauder frames as follows.
%by utilizing the dilation technique on Schauder frames, we extend Godefroy and Kalton's approximation theorem \cite{GK1997}. we present a quantitative estimation for the UBCP of the renormed space $Z_\alpha=(B(X,Y),\|\cdot\|_\alpha)$ when $1/2<\alpha\leq 1$.

\begin{lem}
  Let $X$ be a separable Banach space with the $\lambda$-UBAP and $\varepsilon > 0$. Then there exists a (block unconditional) Schauder frame
  $\{(x_n,f_n)\}_{n\in\mathbb{N}+} \subseteq X\times X^\ast$ such that for all $x\in X$ we have $$x=\sum_{n=1}^{\infty}f_n(x) x_n.$$ And there exists a subsequence
  $\{N_k\}_{k=1}^{\infty}$ of $\N_+$ such that
  $$\sup_{N\in\mathbb{N}_+,\theta_k=\pm1}\left\|\sum_{k=1}^{N}\theta_k \sum_{n=N_k+1}^{N_{k+1}}f_n \otimes x_n\right\|\leq \lambda+\varepsilon  \text{ and } \sup_{N\in\mathbb{N}_+}\left\|\sum_{n=1}^{N}f_n \otimes x_n\right\|\leq \lambda+\varepsilon.$$
  In addition, we can assume $\|x_n\| =1$ and $\|f_n\| \leq 1$  for all $n \in \N_+$.
\end{lem}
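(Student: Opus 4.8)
The plan is to build the frame from an unconditional approximating sequence furnished by the $\lambda$-UBAP, but first to \emph{refine} that sequence by linear interpolation so that consecutive differences have arbitrarily small norm, and only then to split each small difference into rank-one pieces via an Auerbach basis of its range; the smallness of the differences is precisely what will confine every partial sum of the resulting Schauder frame to the ball of radius $\lambda+\varepsilon$. So assume $0<\varepsilon<1$. By the definition of the $\lambda$-UBAP there is a sequence $(A_j)_{j\ge0}$ of finite-rank operators on $X$ with $A_0=0$, $A_jx\to x$ for every $x\in X$, and $\sup\{\|\sum_{j=1}^m\eta_j(A_j-A_{j-1})\|:m\in\N,\ \eta_j=\pm1\}\le\lambda+\varepsilon/4$. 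Put $R_j=A_j-A_{j-1}$ and $\rho_j=\operatorname{rank}R_j$. Since every point of the cube $[-1,1]^m$ is a convex combination of its $\{\pm1\}$-vertices, this upgrades to $\|\sum_{j=1}^m a_jR_j\|\le\lambda+\varepsilon/4$ for all scalars $|a_j|\le1$; this is the only property of $(R_j)$ I shall use. (If some $R_j=0$, e.g.\ when $\dim X<\infty$, one makes the evident harmless modifications.)

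Choose integers $q_j\ge 2^{j+2}\rho_j\|R_j\|/\varepsilon$ and interpolate linearly: between $A_{j-1}$ and $A_j$ insert the finite-rank operators $A_{j-1}+\tfrac{i}{q_j}R_j$ for $i=0,1,\dots,q_j$, relabel the whole (still strongly convergent) list as $(C_m)_{m\ge0}$ with $C_0=0$, and set $D_m=C_m-C_{m-1}$. A nonzero $D_m$ equals $\tfrac1{q_j}R_j$ for the index $j=j(m)$ of its group, so $\operatorname{rank}D_m=\rho_{j(m)}$ and $\rho_{j(m)}\|D_m\|\le\varepsilon 2^{-j(m)-2}$; each $C_m$ is a convex combination of two consecutive $A_j$'s, hence $\|C_m\|\le\lambda+\varepsilon/4$; and $\|\sum_m\theta_mD_m\|\le\lambda+\varepsilon/4$ for every choice of signs $\theta_m$, because within each group the included $\pm\tfrac1{q_j}$ add up to a coefficient of modulus $\le1$ on $R_j$.

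Finally, for each $m$ with $D_m\ne0$ fix an Auerbach basis $z_{m,1},\dots,z_{m,\rho_{j(m)}}$ of $\operatorname{range}D_m$, extend its biorthogonal norm-one functionals to $X^\ast$ by Hahn--Banach, and write $D_m=\sum_i f_{m,i}\otimes z_{m,i}$ with $\|z_{m,i}\|=1$ and $\|f_{m,i}\|\le\|D_m\|<1$. Enumerating the pairs $(x_n,f_n):=(z_{m,i},f_{m,i})$ by increasing $m$, then $i$, gives the asserted frame: the partial-sum operators $S_N$ equal $C_m$ at the positions $P_m$ that close off block $D_m$, while for $P_m<N<P_{m+1}$ one has $S_N=C_m+\sum_{i\le\ell} f_{m+1,i}\otimes z_{m+1,i}$, whence $\|S_N\|\le\|C_m\|+\rho_{j(m+1)}\|D_{m+1}\|\le\lambda+\varepsilon/4+\varepsilon 2^{-j(m+1)-2}<\lambda+\varepsilon$ and $S_Nx\to x$, so $x=\sum_n f_n(x)x_n$ for all $x$. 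Letting $\{N_k\}$ run through the positions $P_1<P_2<\cdots$ (the finitely many indices below $P_1$ forming an irrelevant initial segment), the block sum $\sum_{n=N_k+1}^{N_{k+1}} f_n\otimes x_n$ is a sum of consecutive $D_m$'s, so the block-unconditionality constant is again at most $\lambda+\varepsilon/4\le\lambda+\varepsilon$; thus the frame is block unconditional with $\|x_n\|=1$ and $\|f_n\|\le1$.

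The one real obstacle, I expect, is the quantitative bookkeeping in the interpolation step: $q_j$ must be taken large relative to \emph{both} $\|R_j\|$ and $\operatorname{rank}R_j$, so that an \emph{entire} partial decomposition of a single $D_m$ is negligible against $\lambda+\varepsilon$, while one must also check that linear interpolation neither inflates $\|C_m\|$ past $\lambda+\varepsilon$ (it does not: the $C_m$ are convex combinations of the $A_j$) nor spoils unconditionality (it does not: the cube argument). The normalization $\|x_n\|=1$, $\|f_n\|\le1$ then comes for free from the Auerbach splitting.
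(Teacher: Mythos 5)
Your proof is correct and is essentially the paper's own argument: both dilute each difference $S_j-S_{j-1}$ of the unconditional approximating sequence by a large integer factor and repeat the resulting rank-one pieces, so that the partial-sum operators linearly interpolate between consecutive $S_j$'s (norm $\le\lambda+\varepsilon/4$ by convexity and the cube argument) up to a within-block correction made negligible by choosing the dilution factor large. The only cosmetic difference is that you control that correction via an Auerbach basis of $\operatorname{range}(S_j-S_{j-1})$ (bound $\rho_j\|R_j\|/q_j$), whereas the paper uses an arbitrary Schauder basis of the range and its basis constant $C_n/M_n$.
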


Based on the relationship between the $\lambda$-UBAP and block unconditional Schauder frames, we obtain the following extended version of Godefroy and Kalton's approximation theorem \cite{GK1997} and prove it by the dilation technique on Schauder frames in Section 2.

\begin{thm}
Let $X$ be a separable Banach space. Then $X$ has the $\lambda$-UBAP if and only if for any $\varepsilon>0$, $X$ embeds as a $(\lambda+\varepsilon)$-complemented subspace of a Banach space $Y_{\varepsilon}$ with an $1$-unconditional finite-dimensional decomposition.
\end{thm}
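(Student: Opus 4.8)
The plan is to prove both directions, but the substantial content is the forward implication, which will be obtained by the dilation technique applied to the block unconditional Schauder frame supplied by Lemma~1.1.

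\medskip

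\noindent\textbf{The easy direction.} Suppose $X$ embeds as a $(\lambda+\varepsilon)$-complemented subspace of $Y_\varepsilon$, where $Y_\varepsilon$ has a $1$-unconditional FDD $(E_j)_j$. Let $P_N$ be the natural FDD projections onto $E_1\oplus\cdots\oplus E_N$; since the FDD is $1$-unconditional, $\sup_N\|P_N\|=1$ and more generally every "diagonal sign operator" has norm $\le 1$. If $P\colon Y_\varepsilon\to X$ is the projection with $\|P\|\le\lambda+\varepsilon$ and $\iota\colon X\hookrightarrow Y_\varepsilon$ the embedding, then $S_N:=P\circ P_N\circ\iota$ is a sequence of finite-rank operators on $X$ converging strongly to $\mathrm{id}_X$, and the associated "difference" operators inherit an unconditionality bound governed by $\|P\|\|\iota\|$. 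A short computation shows $X$ has the $\mu$-UBAP for every $\mu>\lambda$; letting $\varepsilon\to 0$ and recalling the UBAP constant is attained as an infimum, we get the $\lambda$-UBAP. (One must be a little careful that the UBAP is about convex combinations / unconditional sums of finite-rank operators; but the standard equivalence between the $\lambda$-UBAP and the existence of an FDD-like decomposition with unconditionality constant $\le\lambda$ makes this routine.)

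\medskip

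\noindent\textbf{The main direction via dilation.} Assume $X$ has the $\lambda$-UBAP and fix $\varepsilon>0$. Apply Lemma~1.1 to get a Schauder frame $\{(x_n,f_n)\}_{n\in\N_+}$ with $\|x_n\|=1$, $\|f_n\|\le 1$, reconstruction $x=\sum_n f_n(x)x_n$, a blocking $(N_k)$ with
$$\sup_{N,\,\theta_k=\pm 1}\Big\|\sum_{k=1}^N\theta_k\sum_{n=N_k+1}^{N_{k+1}}f_n\otimes x_n\Big\|\le\lambda+\varepsilon',$$
where $\varepsilon'<\varepsilon$ is chosen so that the final constant comes out $\le\lambda+\varepsilon$. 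Group the frame into blocks $H_k:=\mathrm{span}\{x_n:N_k<n\le N_{k+1}\}$ and write $G_k\colon X\to X$ for the corresponding block projection $G_k x=\sum_{N_k<n\le N_{k+1}}f_n(x)x_n$; then $\sum_k G_k=\mathrm{id}_X$ pointwise and $\sup_{\theta_k=\pm1}\|\sum_k\theta_k G_k\|\le\lambda+\varepsilon'$. Now form the dilation space: roughly, $Y_\varepsilon$ is the completion of $c_{00}(\{y=(y_k): y_k\in H_k\})$ under the norm
$$\|(y_k)_k\|_{Y_\varepsilon}:=\sup_{\theta_k=\pm1}\Big\|\sum_k\theta_k\, y_k\Big\|_X,$$
which by construction makes the "coordinate" FDD $(H_k)$ of $Y_\varepsilon$ \emph{$1$-unconditional}. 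Define $\iota\colon X\to Y_\varepsilon$ by $\iota x=(G_k x)_k$ and $P\colon Y_\varepsilon\to X$ by $P(y_k)_k=\sum_k y_k$ (this sum converges and $\|P\|\le\lambda+\varepsilon'$ by the block unconditionality, while $\|\iota\|\le 1$ since $\|\sum_k\theta_k G_k x\|\le(\lambda+\varepsilon')\|x\|$... more precisely one arranges the estimates so $\|\iota\|\,\|P\|\le\lambda+\varepsilon$). Then $P\iota=\mathrm{id}_X$ because $\sum_k G_k x=x$, so $\iota X$ is the range of the projection $\iota P$ of norm $\le\lambda+\varepsilon$, i.e. $X$ is $(\lambda+\varepsilon)$-complemented in $Y_\varepsilon$, which has a $1$-unconditional FDD.

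\medskip

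\noindent\textbf{Where the work is.} The routine parts are the strong-convergence and norm bookkeeping. The two points needing care are: (i) verifying $\|\cdot\|_{Y_\varepsilon}$ is a genuine complete norm on the dilation space and that the blocks $(H_k)$ really form an FDD for it with constant exactly $1$ — this is where one uses that the partial sums $\sum_{k\le M}\theta_k G_k$ are uniformly bounded (the second displayed inequality in Lemma~1.1, $\sup_N\|\sum_{n\le N}f_n\otimes x_n\|\le\lambda+\varepsilon'$, handles the non-blocked partial sums and hence density/completeness); and (ii) distributing the two slacks ($\varepsilon'$ from Lemma~1.1 and the renormalization of $\iota$) so the product of the embedding and projection norms is genuinely $\le\lambda+\varepsilon$ rather than $(\lambda+\varepsilon)^2$ — this typically forces normalizing $\iota$ by dividing by its norm and absorbing the factor into $P$, using that $P\iota=\mathrm{id}$ forces $\|\iota\|\,\|P\|\ge\lambda$ is \emph{not} automatic, so one simply takes $\iota x=(G_kx)_k$ as is and checks directly $\|\iota x\|_{Y_\varepsilon}=\sup_\theta\|\sum_k\theta_kG_kx\|_X\le(\lambda+\varepsilon')\|x\|$ and $\|P\|\le 1$ is false — rather $\|P\|\le\lambda+\varepsilon'$ — so the honest statement is $\iota P$ has norm $\le(\lambda+\varepsilon')$ after checking $\|\iota\|\le 1$; getting this clean is the main obstacle. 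I expect the paper handles it exactly as in Godefroy--Kalton, by first passing to an equivalent frame where $\|\iota\|\le 1$ automatically and the unconditionality/partial-sum constants absorb the rest.
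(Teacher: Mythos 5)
Your proof takes essentially the same route as the paper: both directions are handled the same way, and the substantial implication is proved by dilating the block unconditional Schauder frame of Lemma 1.1 into a space whose blocks form a $1$-unconditional FDD, with the coefficient map as the embedding and the summation map as the left inverse. The only real divergence is in the dilation norm: you put the sign-supremum norm directly on tuples $(y_k)$ with $y_k\in H_k$, while the paper works with formal coefficient sequences and takes the \emph{maximum} of the blocked sign-supremum and the unblocked partial-sum supremum $\sup_N\|\sum_{k\le N}a_kx_k\|$ (the latter is what makes the norm non-degenerate on coefficient sequences when the $x_n$ within a block are dependent; your block-vector formulation sidesteps that issue). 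Where your write-up goes astray is the final paragraph: the ``obstacle'' you describe does not exist. With your norm one has $\|P(y_k)_k\|=\|\sum_k y_k\|_X\le\sup_{\theta}\|\sum_k\theta_ky_k\|_X$, and the partial sums are controlled by $1$-suppression unconditionality, so $\|P\|\le 1$ (your assertion that ``$\|P\|\le 1$ is false'' is itself false), while $\|\iota\|\le\lambda+\varepsilon'$; hence $\|\iota P\|\le\lambda+\varepsilon'$ with no $(\lambda+\varepsilon)^2$ loss and no renormalization of $\iota$ needed. This is exactly the division of labor in the paper, just with the roles reversed relative to what you feared: there $\|S\|\le 1$ (summation) and $\|T\|\le\lambda+\varepsilon$ (coefficients), so $\|TS\|\le\lambda+\varepsilon$. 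One last small point: in the converse direction the operators $S_N=P\circ P_N\circ\iota$ need not satisfy the composition identity $S_mS_n=S_n$ built into the paper's definition of an approximating sequence; this is repairable by a standard perturbation and the paper itself dismisses that direction as trivial, but your sketch should not present it as immediate.
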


Moreover, using the relationship between the $\lambda$-UBAP and block unconditional Schauder frames, we discuss the ball-covering property of operator spaces and give uniformly quantitative estimations as applications.
%give a quantitative estimation for the uniform ball-covering property of the renormed space $Z_\alpha=(B(X,Y),\|\cdot\|_\alpha)$ when $1/2<\alpha\leq 1$.

%Cheng \cite{C2006} introduced the notion of the ball-covering property and initiated related research which attracted wide attention and produced many subsequent study results.
The notion of the ball-covering property was introduced by Cheng \cite{C2006}.
A normed space $X$ is said to have the {\it ball-covering property (BCP)} if its unit sphere can be covered by countably many closed or open balls off the origin. The centers of the balls are called the {\it BCP points} of $X$. Assume that a collection of countable balls $\{B(x_{n},r_{n})\}_{n=1}^{\infty}$ forms a ball-covering of $X$, that is, the unit sphere of $X$ can be covered by $\bigcup_{n=1}^{\infty} B(x_{n},r_{n})$ with $\|x_n\|>r_n$ for all $n\in\mathbb{N}_+$. Then $X$ is said to have the {\it $r$-strong ball-covering property ($r$-SBCP)} \cite{LZ2021} if there exists a positive number $r>0$ such that $\sup r_n\leq r$. Moreover, $X$ is said to have the {\it $(r,\delta)$-uniform ball-covering property (($r,\delta$)-UBCP)} \cite{LZ2021}, if $X$ has the $r$-SBCP, and there exists another positive number $\delta>0$ such that $B(x_{n},r_{n}) \cap B(0,\delta) = \emptyset$ for all $n\in\mathbb{N}_+$. The normed space $X$ is said to have the SBCP (UBCP, respectively) if there exists $r>0$ ($r>0$ and $\delta>0$, respectively) such that $X$ has the $r$-SBCP (($r,\delta$)-UBCP, respectively). The BCP has deep relationships with many important properties of Banach spaces, such as separability, completeness, reflexivity, smoothness, Radon-Nikodym property \cite{CWZ2011}, uniform convexity, uniform non-squareness \cite{CLL2010}, strict convexity and dentability \cite{SC2015,SC2018}, and universal finite representability and B-convexity \cite{Z2012}. The BCP also plays an important role in the study of geometric and topological properties of Banach spaces \cite{ AG2023,CKZ2020,CLL2023,FR2016,LZ2020,S2021}.

It is obvious from the definition of the BCP that separable normed spaces have the BCP, but the converse is not true \cite{C2006,CCL2008}. Cheng \cite{C2006} proved that the non-separable space $\ell^{\infty}$ has the BCP. However, Cheng, Cheng and Liu \cite{CCL2008} showed that $\ell^{\infty}$ can be renormed such that the renormed space fails the BCP. The class of non-separable Banach spaces with $w^\ast$-separable dual have very rich geometrical structure. Cheng, Shi and Zhang \cite{CSZ2009} proved that the dual space $X^\ast$ is $w^\ast$-separable if and only if $X$ can be $(1+\varepsilon)$-renormed to have the BCP for any $\varepsilon>0$. Furthermore, Fonf and Zanco \cite{FZ32009} proved that $X^\ast$ is $w^\ast$-separable if and only if $X$ can be $(1+\varepsilon)$-renormed to have the SBCP for any $\varepsilon>0$. Liu et al. \cite{LLLZ2022} investigated the BCP on non-commutative spaces of operators $B(X,Y)$ and proved that if $X$ is a Banach space with $X^\ast$ separable, then any subspace $E$ of $B(X,\ell_{p})$ containing the space of finite rank operators $F(X,\ell_{p})$ has the UBCP for $1<p<\infty$. Moreover, they presented some necessary conditions for $B(X,Y)$ to have the BCP. In \cite{BLS2025}, we given a characterization for the BCP of the renormed space $X_\alpha=(B(X),\|\cdot\|_\alpha)$ $(0\leq\alpha\leq1)$, where $X$ is a Banach space with a shrinking $1$-unconditional basis. In this paper, we focus on the Banach spaces with various classical approximation properties and present some sufficient conditions for $B(X,Y)$ to have the UBCP. We also give a quantitative estimation for the UBCP of the renormed spaces.

\begin{thm}
  Let $X$ and $Y$ be Banach spaces with $X^\ast$ and $Y$ separable. If $X$ or $Y$ has the $(2-\varepsilon)$-UBAP for any $\varepsilon>0$, then for all $1-\varepsilon/2 < \alpha \leq 1$, the renormed space $Z_\alpha=\left(B(X,Y), \|\cdot\|_\alpha \right)$ has the $(2\alpha,2\alpha+\varepsilon-2-\sigma)$-UBCP for all $0<\sigma<2\alpha+\varepsilon-2$.
\end{thm}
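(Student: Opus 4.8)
The plan is to use Lemma 1.1 to produce, on the space that has the $(2-\varepsilon)$-UBAP (say $X$; the case of $Y$ is symmetric via the adjoint), a block unconditional Schauder frame $\{(x_n,f_n)\}$ with the two norm bounds $\le 2-\varepsilon$, and then to transfer this structure to $B(X,Y)$ by composing operators with the associated finite-rank projections. Writing $P_k = \sum_{n=N_k+1}^{N_{k+1}} f_n\otimes x_n$ for the block projections and $Q_N = \sum_{k=1}^N P_k$, the key point is that for $T\in B(X,Y)$ the maps $T\mapsto T\circ Q_N$ provide a countable approximating net, and the block unconditionality gives $\sup_N \|\sum_{k=1}^N \theta_k P_k\| \le 2-\varepsilon$ for all sign choices $\theta_k=\pm 1$. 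Since $X^\ast$ and $Y$ are separable, $F(X,Y)$ (hence the range of each $T\mapsto T\circ Q_N$, which consists of finite-rank operators factoring through a fixed finite-dimensional space) is norm-separable, so we can choose a countable dense set of directions among these finite-rank operators; these will be the candidate centers of the covering balls.

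Next I would carry out the ball-covering construction in the renormed space $Z_\alpha$. Recall $\|T\|_\alpha$ is the interpolated norm between $\|T\|$ and something like $\max(\|TQ_N\|, \|T(\mathrm{id}-Q_N)\|)$-type quantities coming from the UFDD structure; concretely one expects $\|T\|_\alpha \approx \alpha\|T\| + (1-\alpha)(\text{block-sum norm})$, and the factor $2\alpha$ in the target $(2\alpha,\cdot)$-UBCP should come from the unconditional constant $2-\varepsilon$ being multiplied through this renorming. For a fixed $T$ on the unit sphere of $Z_\alpha$, one finds $N$ with $\|T - TQ_N\|$ small, approximates $TQ_N$ in the separable space of finite-rank operators by a center $W$ from the countable family, and shows that the ball $B(W,r)$ with $r$ slightly below $\|W\|$ contains $T$; the uniform lower gap $\delta$ on $\|W\|$ — i.e.\ that none of these balls meets a fixed ball around the origin — is exactly where the inequality $\alpha > 1-\varepsilon/2$, equivalently $2\alpha+\varepsilon-2 > 0$, is used, and one reads off $\delta = 2\alpha+\varepsilon-2-\sigma$ for any $0<\sigma<2\alpha+\varepsilon-2$.

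The quantitative bookkeeping is the heart of the argument: I would track how the unconditional constant $\lambda = 2-\varepsilon$ interacts with the renorming parameter $\alpha$ so that a center $W$ with $\|W\|_\alpha$ close to $2\alpha$ can be used while still keeping $\|T-W\|_\alpha < \|W\|_\alpha$, i.e.\ verifying $T\in B(W,r_W)$ with $\|W\|_\alpha > r_W$, and simultaneously $\|W\|_\alpha - r_W \ge \delta$. This forces $r_W \le 2\alpha$ (giving the $r$-SBCP with $r=2\alpha$) and the gap to be at least $2\alpha - (2-\varepsilon) = 2\alpha+\varepsilon-2$, up to the slack $\sigma$. The main obstacle I anticipate is precisely making the two estimates — the covering estimate $\|T-W\|_\alpha < r_W$ and the separation estimate $\|W\|_\alpha - r_W \ge 2\alpha+\varepsilon-2-\sigma$ — hold \emph{simultaneously} and \emph{uniformly} over all $T$ on the sphere, which requires choosing the finite level $N$, the approximating center $W$, and the radius $r_W$ in the right order with $\varepsilon$-room allocated carefully; the block unconditionality from Lemma 1.1 (rather than mere boundedness of the $Q_N$) is what makes the uniform choice of $\delta$ possible. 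Once these estimates are in place, countability of the center set follows from separability of $X^\ast$ and $Y$, completing the verification that $Z_\alpha$ has the $(2\alpha, 2\alpha+\varepsilon-2-\sigma)$-UBCP.
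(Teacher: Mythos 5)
Your overall strategy coincides with the paper's: extract a block unconditional Schauder frame via Lemma~\ref{decomposable}, take as ball centers the norm-$2$ multiples $2\sum_{i}g_i^{\ast}\otimes s_i/\|\sum_i g_i^{\ast}\otimes s_i\|$ of finite-rank operators built from countable dense subsets of $B_{X^\ast}$ and $B_Y$, and read the gap off $2\alpha-(2-\varepsilon)$. However, one step as you state it is false and another rests on a wrong recollection of the setup. The false step is ``one finds $N$ with $\|T-TQ_N\|$ small.'' The partial sums $Q_N$ of a Schauder frame converge to the identity only in the strong operator topology, so $TQ_N\to T$ strongly but not in norm: already for $X=Y=\ell_2$, $Q_N$ the coordinate projections and $T=id$, one has $\|T-TQ_N\|=1$ for all $N$. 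If $\|T-TQ_N\|$ could be made small for every $T$, then $B(X,Y)$ would equal the norm closure of the finite-rank operators and hence be separable, trivializing the problem. What the paper actually does is the opposite: using lower semicontinuity of the norm under strong convergence it finds $k_0$ with $\|TQ_{N_{k_0}}\|$ close to $\|T\|$ \emph{from below}, and then writes $T-2W/\xi$ (up to the small error between $W$ and $TQ_{N_{k_0}}$) as $(1-2/\xi)\,TQ_{N_{k_0}}+T(id-Q_{N_{k_0}})$. The tail $T(id-Q_{N_{k_0}})$ is never small; it is \emph{absorbed} by Lemma~\ref{lem3.4}, which gives $\|\sum_k a_kA_k\|\leq\|\{a_k\}\|_\infty(\lambda+\varepsilon)$ with $a_k=1-2/\xi$ for $k\leq k_0$ and $a_k=1$ for $k>k_0$, producing the factor $(2-\varepsilon+\varepsilon_1)\max\{|1-2/\xi|,1\}$. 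This absorption is precisely where block unconditionality is used, so the heart of the argument is missing from your plan even though you correctly identify that unconditionality must enter somewhere.

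The second problem is the renorming. The paper defines $\|\cdot\|_\alpha=\alpha\|\cdot\|_{B(X,Y)}+(1-\alpha)\|\cdot\|_{B(X,Y)/K(X,Y)}$, i.e.\ the interpolation is with the essential (quotient-by-compacts) norm, not with a block-sum norm coming from the UFDD as you guess. This is load-bearing: the centers are finite rank, hence vanish in the quotient norm, so $\|W\|_\alpha=\alpha\|W\|=2\alpha$ exactly; and for $\|T\|_\alpha=1$ one gets $\|T\|\geq1$ and $\|T\|_{Q(X,Y)}\leq1$, while the quotient norm of $T-2W/\xi$ equals $\|T\|_{Q(X,Y)}$, which is how the $(1-\alpha)$ contribution is controlled. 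Without the correct definition the bookkeeping yielding $(2\alpha,\,2\alpha+\varepsilon-2-\sigma)$ cannot be carried out. A minor further point: the case where the hypothesis sits on $X$ rather than $Y$ is handled by writing $Tx=\sum_n(f_n\otimes Tx_n)(x)$ versus $Tx=\sum_n(f_nT\otimes y_n)(x)$, i.e.\ by composing with the frame operators on the appropriate side; invoking adjoints would instead require an approximation property of $X^\ast$, which is not assumed.
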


\begin{cor}
  Let $X$ and $Y$ be Banach spaces with $X^\ast$ and $Y$ separable. If $X$ or $Y$ has the the $\lambda$-RMAP for some $\lambda>1$, then $B(X,Y)$ has the UBCP.
\end{cor}

\begin{cor}
     Let $X$ and $Y$ be Banach spaces with $X^\ast$ and $Y$ separable.  If $X$ or $Y$ has an approximating sequence $\{S_n\}_{n=1}^\infty$ such that $\lim_n\|id-2S_n\| < 3/2$, then  $B(X,Y)$ has the UBCP.
\end{cor}

Finally, based on the equivalent relationships between the unconditional metric approximation property, u-ideals and the ball intersection property, we point out the connections between the UBCP, u-ideals and the ball intersection property.

The following is a list of notations that will be used in this article.

 \begin{itemize}
    \item $\N_+$ --- the set of positive integers.
    \item $B(X,Y)$ --- the space of bounded linear operators from Banach space $X$ into Banach space $Y$.
    \item $\operatorname{span} \{E\} $ --- the linear space spanned by the set $E$.
    \item $\operatorname{dim} (E) $ --- the dimension of $E$ as a linear space.
    \item $a \otimes b$ --- the rank one operator defined by $(a\otimes b) (x) = a(x)b$ for all $x,b \in X$ and $a \in X^*$.
    \item $B_X$ and $S_X$ --- the closed unit ball and unit sphere of Banach space $X$, respectively.
    \item $B(x,r)$ --- the closed ball with center $x$ and radius $r$.
    \item $id_X$ --- the identity operator on Banach space $X$.
\end{itemize}

\section{Approximation properties and Schauder frames}
%\section{Approximation properties, bases and frames}

%The approximation properties, which already appeared in Banach's book in 1932, plays a fundamental role in the structure theory of Banach spaces.

Firstly, we introduce some basic definitions and results concerning the approximation property \cite{C2001} as follows.

\begin{de}
A Banach space $X$ is said to have the approximation property if for every compact set $K$ in $X$ and every $\varepsilon>0$, there is a finite rank operator $S$ such that $\|Sx-x\|\leq\varepsilon$ for every $x\in K$.
\end{de}

\begin{de}
Let $X$ be a Banach space and $1\leq\lambda<\infty$. We say that $X$ has the $\lambda$-bounded approximation property ($\lambda$-BAP for short) if for every $\varepsilon >0$ and every compact set $K$ in $X$, there is a finite rank operator $S$ such that $\|S\|\leq \lambda$ and $\|Sx-x\|\leq\varepsilon$ for every $x\in K$. We say that $X$ has the bounded approximation property (BAP for short) if $X$ has the $\lambda$-BAP for some $\lambda$. Moreover, a Banach space is said to have the metric approximation property (MAP for short) if it has the $1$-BAP.
\end{de}

%\begin{de}
%Let $X$ and $Y$ be Banach spaces. Let $\lambda> 0$ and $T\in B(X,Y)$. Then $T$ is said to have the $\lambda$-bounded approximation property ($\lambda$-BAP) if there exists a net $\{S_{\alpha}\}_{\alpha\in\Gamma}\subseteq B(X,Y)$ with $\dim(S_{\alpha}X)<\infty$ for all $\alpha\in\Gamma$ such that $$\sup_{\alpha\in\Gamma}\|S_{\alpha}\|\leqslant\lambda \quad \text{and} \quad \lim_{\alpha}\|S_{\alpha}x-Tx\|=0$$ for all $x\in X$. We say $T$ has the bounded approximation property (BAP) if there exists a $\lambda>0$ such that $T$ has the $\lambda$-bounded approximation property.
%\end{de}
%\begin{re}
%It follows from the definition that a Banach space $X$ has the BAP if and only if the identity operator $id_{X}$ on $X$ has the BAP.
%\end{re}

The following lemma provides an alternative formulation of the BAP, and we omit its proof (c.f. \cite{C2001}).

\begin{lem}\label{lem1}
Let $X$ be a separable Banach space. Then $X$ has the $\lambda$-BAP if and only if there is a sequence of finite rank operators $\{S_n\}_{n=1}^\infty$ on $X$ converging strongly to the identity operator such that $S_m S_n=S_n$ for all $n<m$, and $\lim\sup_n\left\|S_n\right\| \leq \lambda$.
\end{lem}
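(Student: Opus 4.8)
This is the standard two‑sided characterisation of the $\lambda$-BAP by a nested approximating sequence (cf.\ the survey cited in the statement); I would argue the two implications separately.

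\emph{Sufficiency.} Assume such a sequence $\{S_n\}_{n=1}^\infty$ exists. Being pointwise convergent it is pointwise bounded, so $M:=\sup_n\|S_n\|<\infty$ by the uniform boundedness principle. Fix a compact $K\subseteq X$ and $\varepsilon>0$. Cover $K$ by finitely many balls $B(k_i,\varepsilon_1)$ with $k_i\in K$ and $(M+2)\varepsilon_1<\varepsilon/2$; choosing $N$ with $\|S_nk_i-k_i\|<\varepsilon_1$ for all $i$ and $n\ge N$ gives $\sup_{x\in K}\|S_nx-x\|<\varepsilon/2$ for $n\ge N$ (pointwise convergence of a uniformly bounded sequence is uniform on compacta). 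Pick $\eta>0$ with $\tfrac{\eta}{\lambda+\eta}\sup_{x\in K}\|x\|<\varepsilon/2$; since $\limsup_n\|S_n\|\le\lambda$ there is an $n\ge N$ with $\|S_n\|\le\lambda+\eta$, and $S:=\tfrac{\lambda}{\lambda+\eta}S_n$ is finite rank with $\|S\|\le\lambda$ and, for $x\in K$,
\[
\|Sx-x\|\le\tfrac{\lambda}{\lambda+\eta}\|S_nx-x\|+\tfrac{\eta}{\lambda+\eta}\|x\|<\varepsilon .
\]
Hence $X$ has the $\lambda$-BAP.

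\emph{Necessity.} This is the substantive direction. By separability fix a dense sequence $\{z_n\}_{n=1}^\infty$ in $X$. I would construct inductively finite‑rank operators $S_n$ and finite‑dimensional subspaces $E_n$, with $E_0:=\{0\}$, such that
\[
E_{n-1}+\operatorname{span}\{z_n\}\subseteq E_n,\qquad \operatorname{range}(S_n)\subseteq E_n,\qquad S_n|_{E_{n-1}}=\mathrm{id},\qquad \|S_n\|\le\lambda+2^{-n}.
\]
Granting this: $\limsup_n\|S_n\|\le\lambda$ is clear; for $n<m$ one has $\operatorname{range}(S_n)\subseteq E_n\subseteq E_{m-1}$ and $S_m$ fixes $E_{m-1}$ pointwise, so $S_mS_n=S_n$; finally $\bigcup_kE_k$ is dense (as $z_k\in E_k$) and $S_n$ fixes $E_k$ pointwise once $n>k$, so $\{S_n\}$ converges strongly to $\mathrm{id}$ on a dense set and hence on all of $X$, using $\sup_n\|S_n\|<\infty$. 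The inductive step is isolated in the following correction lemma: \emph{if $X$ has the $\lambda$-BAP, $E\subseteq X$ is finite‑dimensional and $\eta>0$, then there is a finite‑rank $S$ with $S|_E=\mathrm{id}$ and $\|S\|\le\lambda+\eta$}. To prove it, fix a linear projection $P\colon X\to E$ with $\|P\|\le c_E<\infty$ (available for every finite‑dimensional $E$: take an Auerbach basis of $E$ and extend its biorthogonal functionals by Hahn--Banach, so that $c_E\le\dim E$). Set $\delta:=\eta/c_E$, and apply the $\lambda$-BAP to the compact set $B_E$ to obtain a finite‑rank $T$ with $\|T\|\le\lambda$ and $\|Tx-x\|\le\delta\|x\|$ for $x\in E$. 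The defect $\psi\colon E\to X$, $\psi(x):=x-Tx$, satisfies $\|\psi\|\le\delta$; put $S:=T+\psi\circ P$. Then $\operatorname{range}(S)\subseteq\operatorname{range}(T)+\psi(E)$ is finite‑dimensional, $\|S\|\le\|T\|+\|\psi\|\,\|P\|\le\lambda+\delta c_E=\lambda+\eta$, and $Sx=Tx+\psi(x)=x$ for $x\in E$. Applying the lemma with $E=E_{n-1}$ and $\eta=2^{-n}$ yields $S_n$, and one sets $E_n:=E_{n-1}+\operatorname{span}\{z_n\}+\operatorname{range}(S_n)$ (with $S_1:=0$ in the trivial first step).

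\emph{Main obstacle.} The only genuinely delicate point is the correction lemma: one cannot directly extend the inverse of $T|_E$ off $E$ with norm close to $1$, since finite‑dimensional subspaces of an arbitrary Banach space need not be well complemented. The way around this is that $\dim E$ --- hence the complementation constant $c_E$ --- is \emph{fixed before} the approximation accuracy $\delta$ is chosen, so the possibly huge constant $c_E$ is absorbed by taking $\delta=\eta/c_E$; thereafter the correction $\psi\circ P$ is an honest finite‑rank operator of norm at most $\eta$. All remaining steps (uniform convergence on compacta, the bookkeeping of the $E_n$'s, and the strong‑convergence limit argument) are routine.
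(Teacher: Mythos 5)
Your proof is correct: the sufficiency direction (uniform boundedness plus the rescaling $S=\tfrac{\lambda}{\lambda+\eta}S_n$ to force $\|S\|\le\lambda$) and the necessity direction (the correction lemma producing a finite-rank $S$ with $S|_E=\mathrm{id}$ and $\|S\|\le\lambda+\eta$ by absorbing the complementation constant $c_E$ into the accuracy $\delta$, followed by the nested induction over a dense sequence) are exactly the standard argument for this characterisation. The paper itself omits the proof and simply refers to Casazza's Handbook survey, so there is nothing to compare against; your write-up fills that gap faithfully and I see no errors in it.
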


We call such a sequence $\{S_n\}_{n=1}^\infty$ in Lemma \ref{lem1} an approximating sequence. Moreover, if $\{S_n\}_{n=1}^\infty$ is a commuting approximation sequence, that is, $S_n S_m=S_m S_n$ for all $n, m \in \N_+$, then $X$ is said to have the $\lambda$-commuting bounded approximation property ($\lambda$-CBAP for short).

\begin{de}\label{UMAP}
  Let $X$ be a separable Banach space, then $X$ has the $\lambda$-unconditional BAP ($\lambda$-UBAP for short) if for every $\varepsilon>0$ there is an approximating sequence $\{S_n\}_{n=1}^\infty$ such that if $A_n=S_n-S_{n-1}$ (and $S_0=0)$ for $n\in\mathbb{N}_+$ then for every $N\in\mathbb{N}_+$, we have
  $$\sup_{N \in \N_+, \theta_i= \pm 1}\left\|\sum_{i=1}^N \theta_i A_i\right\| \leq \lambda+\varepsilon.$$
  A Banach space is said to have the unconditional MAP (UMAP for short) if it has the $1$-UBAP.
\end{de}

\begin{lem}\label{lem3.4}
  Let $\{A_n\}_{n=1}^\infty$ be an operator sequence in Definition \ref{UMAP} of a separable Banach space $X$, then for all $\{a_n\}_{n=1}^\infty \in \ell^\infty$, the following inequality holds
  \[ \norm{\sum_{n=1}^\infty a_n A_n} \leq \|\{a_n\}\|_\infty (\lambda+\varepsilon).\]
  Moreover, if $X$ has the $\lambda$-UBAP, then there is an approximating sequence $\{S_n\}_{n=1}^\infty$ such that \[\limsup_{n}\|id_X -2 S_n\| \leq \lambda+\varepsilon.\]
\end{lem}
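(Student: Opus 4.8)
The plan is to prove the two assertions in turn, both as consequences of the unconditionality estimate in Definition \ref{UMAP}.

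For the first inequality, let $\{a_n\}_{n=1}^\infty \in \ell^\infty$ with $\|\{a_n\}\|_\infty = M$. The idea is that the partial sums $\sum_{n=1}^N a_n A_n$ are uniformly bounded by $M(\lambda+\varepsilon)$, and this passes to the infinite sum because $\sum_n a_n A_n$ acts on any fixed $x \in X$ as a convergent series (since $\sum_n A_n x$ telescopes to $x$ via the approximating sequence, absolute convergence is not available, but one argues on the partial sums directly). So first I would fix $N$ and show $\|\sum_{n=1}^N a_n A_n\| \le M(\lambda+\varepsilon)$. After rescaling we may assume $M = 1$, i.e. $a_n \in [-1,1]$. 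Since the set of sign vectors $(\theta_1,\dots,\theta_N) \in \{\pm 1\}^N$ has convex hull equal to $[-1,1]^N$, the operator $\sum_{n=1}^N a_n A_n$ is a convex combination of the operators $\sum_{n=1}^N \theta_n A_n$, each of which has norm $\le \lambda+\varepsilon$ by hypothesis; hence by the triangle inequality (convexity of the norm) $\|\sum_{n=1}^N a_n A_n\| \le \lambda+\varepsilon$. Then, for $x \in X$, since $S_N x = \sum_{n=1}^N A_n x \to x$, one checks that $\big(\sum_{n=1}^N a_n A_n\big)x$ is Cauchy in $N$ — here I would use that for $N < N'$, $\sum_{n=N+1}^{N'} a_n A_n = \big(\sum_{n=1}^{N'} a_n A_n\big) - \big(\sum_{n=1}^{N} a_n A_n\big)$ applied to $x$, but more cleanly, one notes $\sum_{n=N+1}^{N'} a_n A_n$ also has norm $\le \lambda+\varepsilon$ and its range interacts with $(id - S_N)x \to 0$; alternatively just invoke that $\sum a_n A_n$ is a well-defined bounded operator as the strong limit, which exists because on the dense set of vectors of the form $S_m x$ the series is eventually constant. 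Passing to the limit in $N$ in the uniform bound $\|\sum_{n=1}^N a_n A_n\| \le \lambda+\varepsilon$ gives $\|\sum_{n=1}^\infty a_n A_n\| \le \lambda+\varepsilon = \|\{a_n\}\|_\infty(\lambda+\varepsilon)$ after undoing the rescaling.

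For the second assertion, suppose $X$ has the $\lambda$-UBAP and fix $\varepsilon > 0$; take the approximating sequence $\{S_n\}$ and the blocks $\{A_n\}$ from Definition \ref{UMAP} (with the parameter $\varepsilon$, or $\varepsilon/2$ if one wants to absorb slack). Observe that $id_X - 2S_n$ agrees, on the range-relevant part, with a signed sum of the blocks: indeed $S_n = \sum_{i=1}^n A_i$, so $id_X - 2S_n = (id_X - S_n) - S_n$, and one would like to write this as $\lim_m \sum_{i=1}^m \theta_i A_i$ with $\theta_i = -1$ for $i \le n$ and $\theta_i = +1$ for $i > n$. Applying the first part of the lemma to the bounded sequence $\theta = (\underbrace{-1,\dots,-1}_{n},1,1,\dots)$ gives $\|\sum_{i=1}^\infty \theta_i A_i\| \le \lambda+\varepsilon$, and $\sum_{i=1}^\infty \theta_i A_i x = \lim_m \big(\sum_{i=1}^m \theta_i A_i\big)x = \lim_m\big(S_m x - 2 S_n x\big) = x - 2 S_n x = (id_X - 2 S_n)x$ for every $x$, since $S_m x \to x$; hence $id_X - 2S_n = \sum_{i=1}^\infty \theta_i A_i$ as operators, so $\|id_X - 2 S_n\| \le \lambda + \varepsilon$ for every $n$, and in particular $\limsup_n \|id_X - 2S_n\| \le \lambda+\varepsilon$.

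The main obstacle, such as it is, is the justification that the strong limits defining $\sum_n a_n A_n$ exist and that one may legitimately pass from the uniform bound on partial sums to the bound on the limit operator — this is where the telescoping property $S_N x \to x$ of the approximating sequence does the essential work, together with the fact that each tail block $\sum_{n>N} a_n A_n = (id - 2S_N) + \dots$ is itself uniformly bounded. Once the convergence bookkeeping is in place, both inequalities reduce to the convexity-of-norm argument (writing an $\ell^\infty$-vector as a convex combination of sign vectors) applied to the defining estimate of Definition \ref{UMAP}.
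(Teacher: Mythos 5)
Your proof is correct and follows essentially the same route as the paper's: a sign-unconditionality bound on the partial sums $\sum_{n=1}^N a_n A_n$ (you obtain it by writing $[-1,1]^N$ as the convex hull of $\{\pm1\}^N$, the paper via the dual estimate $\sum_{n\le N}|f(A_nx)|\le(\lambda+\varepsilon)\|f\|\,\|x\|$), followed by applying it to the sign pattern $(-1,\dots,-1,1,1,\dots)$, whose partial sums $S_m-2S_n$ converge strongly to $id_X-2S_n$, so that lower semicontinuity of the norm gives $\|id_X-2S_n\|\le\lambda+\varepsilon$. Your extra observation that $\sum_n a_nA_n$ is a well-defined strong limit because the partial-sum operators are uniformly bounded and eventually constant on the dense set $\bigcup_m S_m(X)$ is a point the paper leaves implicit, and is a welcome bit of added rigor.
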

\begin{proof}
   For all $N \in \N_+$, $x \in X$ and $f \in X^*$, we have
   \begin{align*}
       &\left| f \left(\sum_{n=1}^{N}a_n  A_n x\right)\right|  \leq \sum_{n=1}^{N}|a_n| \left| f(A_n x) \right|\\
       \leq &\sup_{n\leq N}|a_n|\sum_{n=1}^{N} |f(A_n x)| \leq (\lambda+\varepsilon)\|\{a_n\}\|_\infty \|f\|\|x\|.
   \end{align*}
  Taking the supremum over all $N \in \N_+$, $x \in B_X$ and $f \in B_{X^*}$, then we have $$\norm{\sum_{n=1}^\infty a_n A_n} \leq \|\{a_n\}\|_\infty (\lambda+\varepsilon).$$
  Note that for all $\varepsilon>0$, by the uniform boundedness principle, we have \[\|id_X -2 S_n\| \leq \liminf_m \norm{-\sum_{1\leq j \leq n}A_j+\sum_{n<j\leq m}A_j} \leq \lambda+\varepsilon.\]
  Thus there is an approximating sequence $\{S_n\}_{n=1}^\infty$ such that $\limsup_{n}\|id_X -2 S_n\| \leq \lambda+\varepsilon$.
\end{proof}

An unconditional approximating sequence is actually an approximating sequence and it is worthwhile to know the relationship between the BAP constant and the UBAP constant.

%\begin{de}[\cite{AK2011},\cite{M1998}]
%  Let $(e_n)_{n=1}^\infty$ be an unconditional Schauder basis of a Banach space $X$, then the unconditional basis  constant $K_u$ is the smallest real number $K$ such that for all $N \in \N_+$ and $|a_n| \leq |b_n|$ whenever $n=1,2,\cdots,N$, the following inequality holds
%  \[ \norm{\sum_{n=1}^N a_n e_n} \leq K \norm{\sum_{n=1}^N b_n e_n}.\]
%\end{de}

\begin{re}\label{rem3.5}
  Let $X$ be a separable Banach space. For a given unconditional basis $\{x_n\}$, denote the basis constant and the unconditional constant of $\{x_n\}$ by $\textbf{bc}(\{x_n\})$ and $\textbf{ubc}(\{x_n\})$, respectively. Let
  $$\textbf{bc}(X)=\inf\left\{\textbf{bc}(\{x_n\}): \{x_n\} \ \text{a basis of}\ X\right\}$$ and  $$\textbf{ubc}(X)=\inf\left\{\textbf{ubc}(\{x_n\}): \{x_n\}\ \text{an unconditional basis of}\ X \right\}.$$
  It was proved in \cite{M1998} that \[1\leq \textbf{bc}(X) \leq  \frac{1+\textbf{ubc}(X)}{2}.\]
  Similarly, let $$\textbf{BAP}(X)=\inf\left\{\lambda: \ X \ \text{has the}\ \lambda\text{-BAP}\right\}$$
  and $$\textbf{UBAP}(X)=\inf\left\{\lambda: \ X \ \text{has the}\ \lambda\text{-UBAP}\right\},$$
  by Lemma \ref{lem3.4}, we have \[1\leq \textbf{BAP}(X) \leq  \frac{1+\textbf{UBAP}(X)}{2}.\]
  %Let $X$ be a separable Banach space, and it is proved in \cite{M1998} that \[1\leq \textbf{bc}(X) \leq  \frac{\textbf{ubc}(X)+1}{2},\] where $\textbf{bc}(X)$ is the infimum of the basis constant of all Schauder bases for X and $\textbf{ubc}(X)$ is the infimum of the unconditional basis constant of all unconditional Schauder bases for X. And by Lemma \ref{lem3.4}, it is worth pointing out that \[1\leq \textbf{BAP}(X) \leq  \frac{\textbf{UBAP}(X)+1}{2},\]
  %where $\textbf{BAP}(X)$ is the infimum of the BAP constant of $X$ and $\textbf{UBAP}(X)$ is the infimum of the UBAP constant of $X$.
\end{re}

The existence of an approximating sequence $\{S_n\}_{n=1}^\infty$ with $\lim_{n}\|id_X -2 S_n\| = \lambda$ is not enough to determine whether $X$ has the $\lambda$-UBAP or even the UBAP since $L^1[0,1]$ has the MAP but fails the UBAP. However, when $\lambda=1$, Casazza and Kalton \cite{C1991} proved that $X$ has the UMAP.

\begin{thm}
Let $X$ be a separable Banach space. Then $X$ has the UMAP if and only if there is an approximating sequence $\left\{S_n\right\}_{n=1}^\infty$ such that $$\lim_{n \rightarrow \infty}\left\|id_X-2 S_n\right\|=1.$$
\end{thm}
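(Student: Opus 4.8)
The statement is the Casazza--Kalton characterization of the UMAP, and I would prove the two implications separately.

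\emph{The forward implication.} Suppose $X$ has the UMAP. By Definition \ref{UMAP} (equivalently, the ``Moreover'' part of Lemma \ref{lem3.4} with $\lambda=1$), for each $k\in\N_+$ there is an approximating sequence $\{S_n^{(k)}\}_{n}$ whose difference operators $A_i^{(k)}=S_i^{(k)}-S_{i-1}^{(k)}$ satisfy $\sup_{N,\theta_i=\pm1}\|\sum_{i=1}^N\theta_iA_i^{(k)}\|\le 1+1/k$; since $id_X-2S_n^{(k)}$ is a limit, in the strong operator topology, of such signed sums, we in fact get $\|id_X-2S_n^{(k)}\|\le 1+1/k$ for every $n$. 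A standard diagonal argument over $k$ then yields a single approximating sequence $\{S_n\}$ with $\limsup_n\|id_X-2S_n\|\le 1$. On the other hand $\|id_X-2S_n\|\ge 1$ always: if $\dim X=\infty$ the finite-rank operator $S_n$ has a nonzero kernel vector $x$, so $(id_X-2S_n)x=x$; and if $\dim X<\infty$ one may take $S_n=id_X$, so that $id_X-2S_n=-id_X$. Hence $\lim_n\|id_X-2S_n\|=1$.

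\emph{The reverse implication.} This is the substantive direction. Let $\{S_n\}$ be an approximating sequence with $\|id_X-2S_n\|\to 1$ and fix $\varepsilon>0$; by Definition \ref{UMAP} it suffices to produce an approximating sequence whose difference operators have unconditional constant at most $1+\varepsilon$. The algebraic input is that, because $S_mS_n=S_n$ for $n<m$,
$$\left(id_X-2S_m\right)\left(id_X-2S_n\right)=id_X-2\left(S_m-S_n\right)\qquad(n<m),$$
and, iterating, $\left(id_X-2S_{p_j}\right)\left(id_X-2S_{p_{j-1}}\right)\cdots\left(id_X-2S_{p_1}\right)=id_X+2\sum_{i=1}^{j}(-1)^{j-i+1}S_{p_i}$ whenever $p_1<\cdots<p_j$, so this product has norm at most $\prod_{i=1}^{j}\|id_X-2S_{p_i}\|$. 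The plan is then: first pass to a subsequence $\{S_{n_k}\}$ along which $\|id_X-2S_{n_k}\|\le 1+\delta_k$, with the $\delta_k$ so small that every relevant finite product of the numbers $1+\delta_k$ stays below $1+\varepsilon$, so that the gap operators $S_{n_k}-S_{n_\ell}$ ($\ell<k$) and the above products of the factors $id_X-2S_{n_i}$ are all almost bi-contractive; and second, build the desired unconditional approximating sequence $\{T_m\}$ by an inductive construction in which the blocks are chosen adaptively, so that each $\pm1$-combination $\sum_{m=1}^N\theta_m\left(T_m-T_{m-1}\right)$ coincides, up to an error absorbed by the tail of $\prod_k(1+\delta_k)$, with one of the near-isometric products $\left(id_X-2S_{n_j}\right)\cdots\left(id_X-2S_{n_1}\right)$, or such a product translated by a multiple of $id_X$.

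\emph{The main obstacle} is precisely this last construction. One cannot take the crude blocks $T_m=S_{n_m}$: by Abel summation, an alternating sign pattern applied to $T_k-T_{k-1}=S_{n_k}-S_{n_{k-1}}$ telescopes into an expression of the form $\pm S_{n_N}+2\sum_i(-1)^iS_{n_{p_i}}$, whose norm is not controlled by $1+\varepsilon$ and may in fact grow (logarithmically, say) with the number of sign changes. The blocks therefore have to be spaced out and perturbed so that \emph{every} sign pattern is realised by one of the genuinely near-isometric products above; verifying that such a sequence can be extracted, that it is an approximating sequence in the sense of Lemma \ref{lem1} (strong convergence to $id_X$, the relation $T_mT_\ell=T_\ell$ for $\ell<m$, and uniform boundedness), and that its unconditional constant is at most $1+\varepsilon$, is the heart of the Casazza--Kalton argument and the step I expect to demand the most care. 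The same combinatorics can equivalently be carried out in terms of a block unconditional Schauder frame, as in the frame lemma above.
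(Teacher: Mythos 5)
You have correctly sensed that this is an imported result: the paper does not prove this theorem at all, but quotes it from Casazza and Kalton \cite{C1991}, so there is no in-paper argument to compare against. Judged on its own terms, your proposal establishes only the easy implication. The forward direction is essentially fine: the bound $\|id_X-2S_n\|\leq 1+\varepsilon$ follows from the unconditionality of the differences exactly as in the ``Moreover'' part of Lemma \ref{lem3.4}, and $\|id_X-2S_n\|\geq 1$ because $id_X-2S_n$ acts as the identity on the nonzero kernel of the finite rank operator $S_n$ when $\dim X=\infty$. One caveat: an approximating sequence in this paper must satisfy $S_mS_n=S_n$ for $n<m$, and operators drawn from different sequences $\{S_n^{(k)}\}_n$ need not satisfy this relation, so your diagonal sequence has to be re-perturbed by the standard argument behind Lemma \ref{lem1} to restore it; this is routine but should be said.

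The genuine gap is the reverse implication, which is the entire content of the theorem. You state the correct algebraic identity $(id_X-2S_m)(id_X-2S_n)=id_X-2(S_m-S_n)$ for $n<m$, and you correctly diagnose why the naive choice $T_m=S_{n_m}$ fails: Abel summation shows the unconditional constant of the crude blocks can grow with the number of sign changes, and even the product identity applied to the set $\{i:\theta_i=-1\}$ only yields a bound near $2$ rather than $1+\varepsilon$. But having named the obstacle, you then defer the adaptive block construction that is supposed to overcome it, calling it ``the heart of the Casazza--Kalton argument'' and ``the step I expect to demand the most care''---that is, you do not carry it out. As written, nothing in the proposal establishes $\sup_{N,\,\theta_i=\pm1}\left\|\sum_{i=1}^{N}\theta_i A_i\right\|\leq 1+\varepsilon$ for any approximating sequence, so the implication from $\lim_n\|id_X-2S_n\|=1$ to the UMAP remains unproved. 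Since the paper itself simply cites \cite{C1991} here, the honest course is either to do likewise or to reproduce the Casazza--Kalton construction in full; a plan that identifies the difficulty without resolving it is not a proof.
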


It is known that Banach space frames have close relationship with various forms of the approximation properties. Casazza, Han and Larson \cite{CHL} introduced the concept of Schauder frames for Banach spaces, which is a natural generalization of frames for Hilbert spaces and Schauder bases for Banach spaces. The techniques inspired by Schauder frames \cite{ST2008,L2010} might play a critical role in the study of Banach space geometry and operator algebras \cite{LR2016}.

 \begin{de}
  Let $X$ be a separable Banach space and $\left\{(x_{n},f_{n})\right\}_{n=1}^\infty \subseteq X \times X ^{\ast}$. Then $\left\{(x_{n},f_{n})\right\}_{n=1}^\infty$ is a Schauder frame of $X$ if for all $ x \in X$, we have \[x=\sum_{n=1}^{\infty} f_{n}(x) x_{n}.\]
  \end{de}

  \begin{de}
 Let $X$ be a separable Banach space and $\left\{(x_{n},f_{n})\right\}_{n=1}^\infty\subseteq X \times X ^{\ast}$ be a Schauder frame. Then $\left\{(x_{n},f_{n})\right\}_{n=1}^\infty$ is said to be block unconditional if there exists a subsequence $\{N_k\}_{k=1}^\infty$ of $\mathbb{N}_+$ such that
  \begin{equation}\label{bufb}
  \sup_{N\in\mathbb{N_+},\theta_k=\pm1}\left\|\sum_{k=1}^{N}\theta_k \sum_{n=N_k+1}^{N_{k+1}}f_n\otimes x_n\right\| < \infty.
  \end{equation}
 We call $\sup_{N\in\mathbb{N_+},\theta_k=\pm1}\left\|\sum_{k=1}^{N}\theta_k \sum_{n=N_k+1}^{N_{k+1}}f_n\otimes x_n\right\|$ satisfying (\ref{bufb}) the block unconditional frame bound of $\left\{(x_{n},f_{n})\right\}_{n=1}^\infty$.
\end{de}

\begin{lem}\label{decomposable}
  Let $X$ be a separable Banach space with the $\lambda$-UBAP and $\varepsilon > 0$. Then there exists a (block unconditional) Schauder frame $\{(x_n,f_n)\}_{n\in\mathbb{N}+} \subseteq X\times X^\ast$ such that for all $x\in X$ we have
  $$x=\sum_{n=1}^{\infty}f_n(x) x_n.$$  And there exists a subsequence
  $\{N_k\}_{k=1}^{\infty}$ of $\N_+$ such that
  $$\sup_{N\in\mathbb{N}_+,\theta_k=\pm1}\left\|\sum_{k=1}^{N}\theta_k \sum_{n=N_k+1}^{N_{k+1}}f_n \otimes x_n\right\|\leq \lambda+\varepsilon  \text{ and } \sup_{N\in\mathbb{N}_+}\left\|\sum_{n=1}^{N}f_n \otimes x_n\right\|\leq \lambda+\varepsilon.$$
  In addition, we can assume $\|x_n\| = 1$ and $\|f_n\| \leq 1$  for all $n \in \N_+$.
\end{lem}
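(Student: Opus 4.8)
The plan is to read the frame off an approximating sequence witnessing the $\lambda$-UBAP, break each consecutive difference $S_n-S_{n-1}$ into rank-one operators in the spirit of the Casazza--Han--Larson correspondence between the BAP and Schauder frames \cite{CHL}, and then refine the resulting blocks so finely that \emph{every} partial sum is trapped within $\lambda+\varepsilon$.

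Fix $\varepsilon>0$. Applying Definition \ref{UMAP} with parameter $\varepsilon/2$ produces an approximating sequence $\{S_n\}_{n=1}^\infty$ (so $S_n\to id_X$ strongly, $S_0=0$, each $S_n$ of finite rank) whose differences $A_n:=S_n-S_{n-1}$ satisfy $\sup_{N,\,\theta_i=\pm1}\bigl\|\sum_{i=1}^N\theta_iA_i\bigr\|\le\lambda+\varepsilon/2$; specialising $\theta_i\equiv1$ gives $\|S_N\|\le\lambda+\varepsilon/2$ for every $N$, and Lemma \ref{lem3.4} gives $\|A_n\|\le\lambda+\varepsilon/2=:M$ for every $n$. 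We keep only those $n$ with $A_n\neq0$. For such $n$, write $m_n=\dim A_n(X)<\infty$, choose a basis $u_{n,1},\dots,u_{n,m_n}$ of $A_n(X)$ with $\|u_{n,j}\|=1$, let $h_{n,1},\dots,h_{n,m_n}$ be its coordinate functionals (extended to $X^\ast$ by Hahn--Banach without increasing the norm), and set $\phi_{n,j}=h_{n,j}\circ A_n$, so that $A_n=\sum_{j=1}^{m_n}\phi_{n,j}\otimes u_{n,j}$. The basis constant $b_n$ of $(u_{n,j})_j$ and $c_n:=\max_j\|h_{n,j}\|$ are finite, hence $\|\phi_{n,j}\|\le c_nM$ and $\bigl\|\sum_{j'\le l}\phi_{n,j'}\otimes u_{n,j'}\bigr\|\le b_nM$ for every $l\le m_n$.

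The crux is the refinement: choose an integer $p_n\ge\max\{2b_nM/\varepsilon,\,c_nM,\,1\}$ and let the $n$th block of the frame consist of the $p_nm_n$ pairs $\bigl(u_{n,j},\tfrac1{p_n}\phi_{n,j}\bigr)$, enumerated by running $q=0,\dots,p_n-1$ and $j=1,\dots,m_n$ lexicographically. Concatenating the blocks over $n$ gives $\{(x_r,f_r)\}_{r\in\N_+}$; with $N_1=0$ and $N_{k+1}=N_k+p_km_k$ the $k$th block sums to $p_k\cdot\tfrac1{p_k}A_k=A_k$. Normalization is built in ($\|x_r\|=1$ and $\|f_r\|\le c_nM/p_n\le1$), and the block-unconditional bound is precisely $\sup_{N,\theta_k=\pm1}\bigl\|\sum_{k}\theta_kA_k\bigr\|\le\lambda+\varepsilon/2$. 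For a partial sum $T_R=\sum_{r\le R}f_r\otimes x_r$: a block end gives $T_{N_{k+1}}=S_k$, of norm $\le\lambda+\varepsilon/2$; if $R$ sits inside block $k$ at position $(q,j)$, then $T_R=(1-\tfrac q{p_k})S_{k-1}+\tfrac q{p_k}S_k+\tfrac1{p_k}\sum_{j'\le j}\phi_{k,j'}\otimes u_{k,j'}$, whose norm is $\le(\lambda+\varepsilon/2)+b_kM/p_k\le\lambda+\varepsilon$. For reconstruction, given $x\in X$: along block ends $T_{N_{k+1}}x=S_kx\to x$, while for $R$ inside block $k$ one has $\|T_Rx-x\|\le\|S_{k-1}x-x\|+(1+\tfrac{b_k}{p_k})\|A_kx\|\to0$ as $R\to\infty$ (then $k\to\infty$, $\|A_kx\|=\|S_kx-S_{k-1}x\|\to0$, and $b_k/p_k\le\varepsilon/2$). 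Hence $x=\sum_r f_r(x)x_r$ and the required frame is produced.

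The only real obstacle, and the sole reason for the subdivision numbers $p_n$, is taming the partial sums \emph{inside} a block: peeling off the rank-one pieces of $A_k$ one at a time would leave an intermediate operator of the form $S_{k-1}$ plus a basis-projection of $A_k(X)$, whose norm is not controlled by $\lambda$; splitting $A_k$ into $p_k$ equal slivers pins every interior partial sum to the segment $[S_{k-1},S_k]$ up to an error $b_kM/p_k\le\varepsilon/2$, after which the block-unconditional estimate, the block-end estimates, and the normalization are all routine.
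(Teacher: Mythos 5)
Your proposal is correct and follows essentially the same route as the paper: take the unconditional approximating sequence, split each difference $A_n=S_n-S_{n-1}$ into rank-one pieces via a basis of its finite-dimensional range, and replicate each piece $p_n$ (the paper's $M_n$) times with weight $1/p_n$ so that every interior partial sum is a convex combination of $S_{n-1}$ and $S_n$ plus a basis-projection error of order $b_nM/p_n$, while the blocks sum back to $A_n$ and inherit the unconditional bound. The only differences are cosmetic (indexing, and your slightly more explicit verification of $\|f_r\|\le 1$ and of the strong convergence of the partial sums).
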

\begin{proof}
  Since $X$ has the $\lambda$-UBAP, by Definition \ref{UMAP} and Remark \ref{rem3.5}, we can take an unconditional approximating sequence $\left\{S_n\right\}_{n=1}^\infty$ on $X$ converging strongly to the identity operator such that $S_m S_n=S_n$ for all $n<m$. By passing to a subsequence, we can assume $\sup_n \left\|T_n\right\| \leq \lambda+2^{-1}\varepsilon$.

  Let $A_n=S_n-S_{n-1}$ (with $S_0=0$), $E_n=A_n X$ and $d_n=\dim(E_n)$ for every $n\in\mathbb{N}_+$, then $d_n<\infty$ and for all $x \in X$, $x=\sum_{n=1}^{\infty} A_n(x)$. By Definition \ref{UMAP}, for every $\varepsilon>0$, we have
  $$\sup_{N \in \N_+, \theta_i= \pm 1}\left\|\sum_{i=1}^N \theta_i A_i\right\| \leq \lambda+ 2^{-1}\varepsilon.$$

  For every finite-dimensional space $E_{n}$, let  $\left\{z_1^n,z_2^n,\dots,z_{d_{n}}^n\right\}$ be a Schauder basis of $E_{n}$ and its normalized bi-orthogonal basis be $$\left\{(z_1^n,z_1^{n*}),(z_2^n,z_2^{n*}),\dots,(z_{d_{n}}^n,z_{d_{n}}^{n*})\right\}.$$
  Then for all $x \in X$, we have
  $$x=\sum_{n=1}^{\infty}A_{n}(x)=
  \sum_{n=1}^{\infty}\sum_{j=1}^{d_{n}}z_j^{n*}(A_{n}(x))z_j^{n}.$$

  Let $C_n$ be the basis constant for $\left\{(z_1^n,z_1^{n*}),(z_2^n,z_2^{n*}),\dots,(z_{d_{n}}^n,z_{d_{n}}^{n*})\right\}$, then $C_n<\infty$.
  Choose $M_n \in \N_+$  such that \[M_n \geq  2\lambda+\varepsilon \quad\text{ and }\quad \frac{C_{n}}{M_{n}}\leq\frac{\varepsilon}{4\lambda+2\varepsilon}.\]
  Let $$x_{i}=z_{j}^{n} \quad \text{ and } \quad f_{i}=\frac{z_{j}^{n *}\circ A_n}{M_{n}} \quad \text{if} \quad i=\sum_{k=1}^{n-1} M_{k} d_{k} + rd_{n}+j,$$ where $r=0,\dots, M_{n}-1$ and $j=1,\dots, d_{n}$. It is obvious that $\|x_k\|= 1$ and $\|f_k\|\leq 1$ for every $k\in \N_+$. Thus for all $x \in X$, we have
  \begin{align*}
    x & =\sum_{n=1}^{\infty}\sum_{j=1}^{d_{n}}z_j^{n*}(A_{n}(x))z_j^{n} \\
      & =\sum_{n=1}^{\infty}M_{n}\sum_{j=1}^{d_{n}}\frac{z_j^{n*}(A_{n}(x))z_j^{n}}{M_{n}} \\
      & =\sum_{n=1}^{\infty} f_{n}(x)x_{n}\\
      & =\sum_{n=1}^{\infty} (f_{n}\otimes x_{n})(x).
  \end{align*}
  Therefore $\sum_{n=1}^{\infty} f_{n}\otimes x_{n}$ converges strongly to $id_X$.

  Next we estimate the frame bound and block unconditional frame bound of
  $\{(x_n,f_n)\}_{n\in\mathbb{N}+}$. For every well-defined $s=\sum_{k=1}^{n-1} M_{k} d_{k} + rd_{n}+j$ and every $x \in X$, we have

  \begin{align*}
    \left\|\sum\limits_{k=1}^{s} f_{k}(x) x_{k}\right\| &=\left\|\sum\limits_{k=1}^{n-1}A_k(x)+\frac{r}{M_{n}}A_n(x)+
    \sum\limits_{k=1}^{j}\frac{z_k^{n*}(A_{n}(x)) z_k^{n}}{M_{n}}\right\|\\
    &=\left\|\sum\limits_{k=1}^{n-1}A_k(x)+\frac{r}{M_{n}}\left(\sum\limits_{k=1}^n A_k(x)-\sum\limits_{k=1}^{n-1} A_k(x)\right)+
    \sum\limits_{k=1}^{j}\frac{z_k^{n*}(A_{n}(x)) z_k^{n}}{M_{n}}\right\|\\
    &=\left\|\frac{r}{M_{n}}\sum\limits_{k=1}^{n}A_k(x)+\left(1-\frac{r}{M_{n}}\right)
    \sum\limits_{k=1}^{n-1} A_k(x)+
    \sum\limits_{k=1}^{j}\frac{z_k^{n*}(A_{n}(x)) z_k^{n}}{M_{n}}\right\|\\
    &=\left\|\frac{r}{M_{n}}S_n(x)+\left(1-\frac{r}{M_{n}}\right)S_{n-1}(x)+
    \sum\limits_{k=1}^{j}\frac{z_k^{n*}(A_{n}(x)) z_k^{n}}{M_{n}}\right\|\\
    & \leq\frac{r}{M_{n}}\|S_n(x)\|+\left(1-\frac{r}{M_{n}}\right)\|S_{n-1}(x)\|
    +\frac{\left\|\sum\limits_{k=1}^{j} z_k^{n*}(A_{n}(x)) z_k^{n}\right\|}{M_{n}}\\
    & \leq\frac{r}{M_{n}}\left(\lambda+\frac{1}{2}\varepsilon\right)\|x\|+
    \left(1-\frac{r}{M_{n}}\right)\left(\lambda+\frac{1}{2}\varepsilon\right)\|x\|\\
    &\quad+\frac{\left\|\sum\limits_{k=1}^{j} z_k^{n*}(A_{n}(x)) z_k^{n}\right\|}{M_{n}}\\
    & \leq \left(\lambda+\frac{1}{2}\varepsilon\right)\|x\|+\frac{C_{n}}{M_{n}}\|A_{n}(x)\|\\
    & \leq \left(\lambda+\frac{1}{2}\varepsilon\right)\|x\|+\frac{\varepsilon}{4\lambda+2\varepsilon}\|A_{n}(x)\|\\
    & \leq \left(\lambda+\frac{1}{2}\varepsilon\right)\|x\|+\frac{\varepsilon}{4\lambda+2\varepsilon}\left(\|S_n x\|+\|S_{n-1}x\|\right)\\
    & \leq  \left(\lambda+\frac{1}{2}\varepsilon\right)\|x\|+\frac{\varepsilon}{4\lambda+2\varepsilon}
    \left(2\lambda+\varepsilon\right)\|x\|\\
    &= \left(\lambda+\varepsilon\right)\|x\|.
    \end{align*}
    Therefore $$\sup_{N\in\mathbb{N}_+}\left\|\sum_{n=1}^{N}f_n \otimes x_n\right\|\leq
    \lambda+\varepsilon.$$

    Let $N_1=0$ and for all $k \in \N_+$ with $k \geq 2$, let \[N_k=\sum_{i=1}^{k-1} d_{i}M_{i}, \]
    then $A_n=\sum_{n=N_k+1}^{N_{k+1}} f_{n}\otimes x_{n}$.
    Therefore, we have
    $$\sup_{N\in\mathbb{N}_+,\theta_k=\pm1}\left\|\sum_{k=1}^{N}\theta_k \sum_{n=N_k+1}^{N_{k+1}}f_n \otimes x_n\right\|=\sup_{N\in\mathbb{N}_+,\theta_k=\pm1}\left\|\sum_{k=1}^{N}\theta_k A_k\right\|\leq \lambda+\varepsilon. $$
  \end{proof}

\begin{de}(\cite{LSZ2023})
Let $X$ be a separable Banach space and $\left\{(x_{n},f_{n})\right\}_{n\in\N_+} \subseteq X\times X^*$ be a Schauder frame of $X$. Then $\left\{(x_{n},f_{n})\right\}_{n\in\N_+}$ is an unconditional Schauder frame if \[x=\sum_{n=1}^{\infty} f_{n}(x)x_n\] converges unconditionally for all $x \in X$.
\end{de}

If $X$ has an (unconditional) Schauder frame, then it is straightforward to construct
a separable Banach space $Z$ with an (unconditional) basis such that $X$ is complemented in $Z$. But if $X$ only has a block unconditional Schauder frame, that is, $X$ has the UBAP, then $X$ is complemented in a separable Banach space with an unconditional finite-dimensional decomposition.

\begin{de}
  A sequence $\{E_n\}_{n=1}^\infty$ of finite-dimensional subspaces of a Banach space $X$ is called an unconditional finite-dimensional decomposition for $X$ (UFDD for short) if for every $x\in X$ there is a unique sequence $x_n\in E_n$ such that $x=\sum_{n}x_n$ and this series converges unconditionally. In this case, we will write $X=\sum_{n}\oplus E_n$ and say $X$ has an unconditional finite-dimensional decomposition. We say $X$ has the $\lambda$-UFDD, if for all $\varepsilon>0$, there is a sequence $\{E_n\}_{n=1}^\infty$ such that
  \[\sup_{\varepsilon_n=\pm1}\norm{\sum_{n}\varepsilon_n x_n} \leq (\lambda+\varepsilon) \norm {\sum_{n} x_n}.\]
\end{de}

%The following theorem is due to \cite{GK1997} and we give an extended version and a new proof by using dilation techniques.
In general, it is very difficult to discover whether a concrete Banach space such as the Orlicz space or the Lorentz space which is complemented in a Banach space with an unconditional basis has an UFDD \cite{C2001}.

The following theorem is due to Godefroy and Kalton's approximation theorem \cite{GK1997}. Here we give an extended version and a new proof by using dilation techniques on Schauder frames.

\begin{thm}
Let $X$ be a separable Banach space. Then $X$ has the $\lambda$-UBAP if and only if for any $\varepsilon>0$, $X$ embeds as a $(\lambda+\varepsilon)$-complemented subspace of a Banach space $Y_{\varepsilon}$ with an $1$-UFDD.
\end{thm}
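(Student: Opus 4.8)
The plan is to use Lemma~\ref{decomposable} to reduce the problem to a concrete dilation construction. For the ``only if'' direction, suppose $X$ has the $\lambda$-UBAP and fix $\varepsilon>0$. Choose $\varepsilon'>0$ small enough that $\lambda+\varepsilon' < \lambda+\varepsilon$ (with room to spare), and apply Lemma~\ref{decomposable} to obtain a block unconditional Schauder frame $\{(x_n,f_n)\}_{n\in\N_+}$ with $\|x_n\|=1$, $\|f_n\|\leq 1$, a partition $\{N_k\}_{k=1}^\infty$ of $\N_+$ into consecutive blocks, block projections $A_k=\sum_{n=N_k+1}^{N_{k+1}} f_n\otimes x_n$ with $\sup_N\|\sum_{k=1}^N\theta_k A_k\|\le\lambda+\varepsilon'$ for all sign choices, and partial-sum bound $\sup_N\|\sum_{n=1}^N f_n\otimes x_n\|\le\lambda+\varepsilon'$. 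The block index sets $I_k=\{N_k+1,\dots,N_{k+1}\}$ are finite, say of size $m_k$. I would then define $Y_\varepsilon$ as the completion of $c_{00}$-sequences $(a_n)_n$ (indexed by $\N_+$) under a norm built so that (i) the blocks $F_k:=\operatorname{span}\{e_n: n\in I_k\}$ form a $1$-UFDD, and (ii) the coordinate map $D:X\to Y_\varepsilon$, $Dx=(f_n(x))_n$, is an isometry (or almost-isometry) onto its range with a $(\lambda+\varepsilon)$-bounded projection back.

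Concretely, the natural candidate norm is
\[
\|(a_n)_n\|_{Y_\varepsilon}
=\sup_{\theta_k=\pm1}\;\sup_{M}\;\Bigl\|\sum_{k=1}^{M}\theta_k\sum_{n\in I_k} a_n x_n\Bigr\|_X ,
\]
possibly symmetrized with a term $\sup_{M}\|\sum_{k\le M}\sum_{n\in I_k}a_n x_n\|$ or combined with $\max_n|a_n|$ to guarantee the norm is nondegenerate on each finite block; the details of making it a genuine norm (homogeneity and triangle inequality are immediate; positive-definiteness needs the max-of-coordinates safeguard on each $F_k$) are routine. With this norm the blocks $F_k$ form a $1$-unconditional FDD essentially by construction: flipping the sign $\theta_k\mapsto-\theta_k$ on a block leaves the defining supremum unchanged, so $\|\sum_k\varepsilon_k P_k y\|_{Y_\varepsilon}=\|y\|_{Y_\varepsilon}$ for any signs $\varepsilon_k$, where $P_k$ is the coordinate projection onto $F_k$. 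One checks that $\{F_k\}$ is indeed an FDD (finitely supported sequences are dense and the partial-sum projections $\sum_{k\le M}P_k$ are uniformly bounded — in fact norm one here). For the embedding: $D$ maps $X$ into $Y_\varepsilon$, and since $x=\sum_k A_k x$ in $X$ one recovers $x$ from $Dx$ via the bounded linear map $R:Y_\varepsilon\to X$, $R(a_n)_n=\sum_n a_n x_n$, whose norm is controlled by the partial-sum / block bounds $\lambda+\varepsilon'$; then $RD=\mathrm{id}_X$, so $P:=DR$ is a projection of $Y_\varepsilon$ onto $DX$ with $\|P\|\le\|D\|\,\|R\|$. A short computation using $\|f_n\|\le1$, $\|x_n\|=1$ and the two supremum bounds shows $\|D\|\le 1$ (or within $\varepsilon$ of it after rescaling) and $\|R\|\le\lambda+\varepsilon'$, giving $\|P\|\le\lambda+\varepsilon$; adjusting $D$ by a harmless normalization makes $D$ an isometric embedding. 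That establishes the ``only if'' direction.

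For the ``if'' direction, suppose $X$ sits as a $(\lambda+\varepsilon)$-complemented subspace of some $Y$ with a $1$-UFDD $\{E_n\}$, with projection $Q:Y\to X$, $\|Q\|\le\lambda+\varepsilon$, and inclusion $\iota:X\hookrightarrow Y$. Let $Q_n$ denote the natural partial-sum projections associated to the $1$-UFDD, so $Q_n\to\mathrm{id}_Y$ strongly, $Q_nQ_m=Q_m$ for $n\ge m$, and $\sup_{\theta_i=\pm1}\|\sum_{i\le N}\theta_i(Q_i-Q_{i-1})\|\le 1+\varepsilon$ after a suitable $(1+\varepsilon)$-renorming (indeed the $1$-UFDD has unconditionality constant $1$, or at worst $1+\varepsilon'$ by the definition given). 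Then $S_n:=Q\,Q_n\,\iota:X\to X$ is a sequence of finite-rank operators on $X$ converging strongly to $\mathrm{id}_X$, with $S_mS_n=S_n$ for $n<m$ (using $Q\iota=\mathrm{id}_X$ and the nesting of the $Q_n$), and the associated differences $A_i=S_i-S_{i-1}=Q(Q_i-Q_{i-1})\iota$ satisfy
\[
\Bigl\|\sum_{i=1}^{N}\theta_i A_i\Bigr\|
\le \|Q\|\,\Bigl\|\sum_{i=1}^{N}\theta_i(Q_i-Q_{i-1})\Bigr\|\,\|\iota\|
\le (\lambda+\varepsilon)(1+\varepsilon'),
\]
which is $\le\lambda+\varepsilon''$ for suitably chosen small parameters. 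Since $\varepsilon$ was arbitrary, this verifies Definition~\ref{UMAP} and shows $X$ has the $\lambda$-UBAP.

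The main obstacle I anticipate is purely in the ``only if'' direction and is two-fold: first, choosing the norm on $Y_\varepsilon$ so that it is simultaneously (a) genuinely positive-definite and complete, (b) makes $\{F_k\}$ a $1$-UFDD \emph{exactly} (not merely $(1+\varepsilon)$-unconditionally), and (c) keeps $D$ isometric and $R$ bounded by $\lambda+\varepsilon$; the block-unconditional supremum handles (b) and (c) but can degenerate on individual coordinates, so one must carefully fold in a coordinate-control term without spoiling the $1$-unconditionality of the \emph{block} decomposition. Second, verifying that $\{F_k\}$ really is an FDD — i.e. that every element of $Y_\varepsilon$ has a \emph{unique} block expansion and that the block partial sums converge — requires checking that $c_{00}$ is dense and the block projections are uniformly (here norm-one) bounded, which is where the $\sup_N\|\sum_{n\le N}f_n\otimes x_n\|\le\lambda+\varepsilon$ half of Lemma~\ref{decomposable} does the work. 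Everything else is a routine diagram chase with operator-norm estimates.
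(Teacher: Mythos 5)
Your proposal follows essentially the same route as the paper: apply Lemma \ref{decomposable}, take $Y_\varepsilon$ to be the completion of the finitely supported block sequences under the maximum of the block-signed supremum and the ordinary partial-sum supremum, and use the analysis map $x\mapsto(f_n(x))_n$ together with the synthesis map $(a_n)\mapsto\sum_n a_nx_n$ to produce a projection of norm at most the product of their norms, i.e.\ at most $\lambda+\varepsilon$. Two small corrections to your estimates and hedges: the bounds are transposed --- the synthesis map $R$ has norm $\le 1$ (the partial-sum term of the $Y_\varepsilon$-norm dominates $\left\|\sum_n a_nx_n\right\|$), while the analysis map $D$ has norm $\le\lambda+\varepsilon$ (its image is precisely the frame expansion of $x$, controlled by the two bounds of Lemma \ref{decomposable}), so the product and hence the conclusion are unchanged but $D$ is not isometric; and the positive-definiteness worry dissolves if, as in the paper, each block $Y_k$ is taken to be a copy of the finite-dimensional subspace $\mathrm{span}\{x_n:n\in I_k\}=A_kX$ of $X$ rather than a formal span of coordinates, so no extra $\max_n|a_n|$ term is needed (and indeed the $x_n$ within a block are linearly dependent by the construction in Lemma \ref{decomposable}). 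In the ``if'' direction, which the paper dismisses as trivial, note that $S_mS_n=QQ_m\iota\, QQ_n\iota$ is not equal to $S_n$ as you claim, since $\iota Q\ne id_Y$; one either invokes the standard perturbation giving Lemma \ref{lem1} or observes that the unconditional estimate $\left\|\sum_i\theta_i Q(Q_i-Q_{i-1})\iota\right\|\le\|Q\|\,\|\iota\|$ is what actually carries the argument.
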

\begin{proof}
The ``if" part is trivial, we only need to prove the ``only if" part.

Since $X$ has the $\lambda$-UBAP, by Lemma \ref{decomposable}, for every $\varepsilon > 0$, there exists a block unconditional Schauder frame $\{(x_n,f_n)\}_{n\in\mathbb{N}_+} \subseteq X\times X^\ast$ such that
$x=\sum_{n=1}^{\infty}f_n(x) x_n$ for all $x\in X$ and there exists a subsequence
 $\{N_k\}_{k=1}^{\infty}$ of $\N_+$ such that $$\sup_{N\in \N_+,\theta_k=\pm1}\left\|\sum_{k=1}^{N}\theta_k \sum_{n=N_k+1}^{N_{k+1}}f_n\otimes x_n\right\|\leq \lambda+\varepsilon \text{ and } \sup_{N\in\mathbb{N}_+}\left\|\sum_{n=1}^{N}f_n \otimes x_n\right\|\leq \lambda+\varepsilon.$$
Let $\{I_k:=\{i\}_{i=N_k+1}^{N_{k+1}}\}_{k\in\mathbb{N}_+}$ and it is a partition of $\mathbb{N}_+$. Let $\{z_k\}_{k\in\mathbb{N}_+}$ be a copy of $\{x_k\}_{k\in\mathbb{N}_+}$ and $Y_k=\overline{\mathrm{span}}\{z_n: n\in I_k\}$, then $Y_k$ is a finite-dimensional subspace since every $I_k$ is finite. Let $c_{00}$ be the collection of all sequences of real numbers with all but finite nonzero elements. For every $k=1,2,\cdots$, let $\{a_{k}\}_{k\in \N_+}\in c_{00}$, we define the space $Y_\varepsilon$ as the completion of $(\sum_{k}\oplus Y_k)$ under the norm
$$\left\|\sum_{k=1}^{\infty}a_{k}z_k\right\|_{Y_{\varepsilon}}=\max\left\{\sup_{N \in \N_+, \theta_n=\pm1}\left\|\sum_{n=1}^{N}\theta_n\sum_{k\in I_n}a_{k}x_k\right\|,\sup_{N\in \N_+}\norm{\sum_{k=1}^N a_k x_k}\right\}.$$
It is easily seen that $\textbf{bc}(Y_k)=1$ for all $k$ and follows immediately that $Y_\varepsilon$ is a Banach space and $\{Y_k\}_{k=1}^{\infty}$ is an $1$-UFDD of $Y_{\varepsilon}$.
%For every $N\in\mathbb{N}_+$ and $\varepsilon_n=\pm1$, we have
%\begin{align*}
% \left\|\sum_{n=1}^{N}\varepsilon_n\sum_{k\in I_n}a_{k}z_k\right\| & =\sup_{K\in\mathbb{N}_+, \theta_n=\pm1}\left\|\sum_{n=1}^{\min\{K,N\}}\varepsilon_n\theta_n\sum_{k\in I_n}a_{k}x_k\right\| \\
%& \leq  \sup_{N\in\mathbb{N}_+, \varepsilon_n=\pm1}\left\|\sum_{n=1}^{N}\varepsilon_n\sum_{k\in I_n}a_{k}x_k\right\|\\
%& \leq \left\|\sum_{n=1}^{\infty}\sum_{k\in I_n}a_{k}z_k\right\|.
%\end{align*}
%Thus $$\sup_{N\in\mathbb{N}_+, \theta_n=\pm1}\left\|\sum_{n=1}^{N}\varepsilon_n\sum_{k\in I_n}a_{k}z_k\right\|\leq \left\|\sum_{n=1}^{\infty}\sum_{k\in I_n}a_{k}z_k\right\|,$$ this implies that $\{Y_k\}_{k=1}^{\infty}$ is an $1$-UFDD of $Y_{\varepsilon}$.

Define the map $S:Y_{\varepsilon}\rightarrow X$ by $$S(z)=\sum_{k=1}^{\infty}a_{k}x_k$$ for all $z=\sum_{k=1}^{\infty}a_{k}z_k\in Y_{\varepsilon}$, then \[\|Sz\|=\norm{\sum_{k=1}^{\infty}a_{k}x_k}\leq \sup_{N\in \N_+}\norm{\sum_{k=1}^N a_k x_k}\leq \|z\|.\]
Thus $\|S\| \leq 1$ and $S$ is a bounded linear operator.

Define the map $T:X\rightarrow Y_{\varepsilon}$ by $$T(x)=\sum_{k=1}^{\infty}\sum_{n\in I_k}f_n(x)z_n$$ for all $x=\sum_{n=1}^{\infty}f_n(x)x_n\in X$, then
\[\begin{aligned}
  \|Tx\|&=\norm{\sum_{k=1}^{\infty}\sum_{n\in I_k}f_n(x)z_n}\\
  &=\max\left\{\sup_{N \in \N_+, \varepsilon_n=\pm1}\left\|\sum_{n=1}^{N}\varepsilon_n\sum_{k\in I_n}f_k(x)x_k\right\|,\sup_{N\in \N_+}\norm{\sum_{k=1}^N f_k(x) x_k}\right\}\\
  &\leq (\lambda+\varepsilon)\|x\|.
\end{aligned}\]
Thus $\|T\| \leq \lambda+\varepsilon$ and $T$ is a bounded linear operator.

For every $x\in X$, we have
\begin{align*}
  & S\circ T(x)=S \circ T\left(\sum_{n=1}^\infty f_n(x)x_n\right)\\
= & S\left(\sum_{k=1}^{\infty}\sum_{n\in I_k}f_n(x)z_n\right)=\sum_{k=1}^{\infty}\sum_{n\in I_k}f_n(x)x_n=x.
\end{align*}
Thus $S\circ T=id_X$.

It is clear that $T$ is injective and $S$ is surjective. So $T(X)$ is a closed subspace of $Y_\varepsilon$. Let $P=T \circ S$, then
$$P^2= (T \circ S) \circ(T \circ S)=T \circ S=P.$$ Since $S$ is surjective,  we obtain $P: Y_{\varepsilon} \rightarrow T(X)$ is a projection with $\|P\|\leq \|S\|\|T\| \leq \lambda+\varepsilon$.
Therefore $T(X)$ is a complemented subspace of $Y_{\varepsilon}$. Then $X$ is isomorphic to a $(\lambda+\varepsilon)$-complemented subspace $T(X)$ of $Y_{\varepsilon}$ with an 1-UFDD.
\end{proof}

\section{The UBCP of operator spaces}

Let $X$ and $Y$ be Banach spaces, and let $K(X,Y)$ be the ideal of compact operators in $B(X,Y)$. We fix a real number $\alpha\in [0,1]$, and define a new norm by  \begin{align}\|\cdot\|_{\alpha}=\alpha\|\cdot\|_{B(X,Y)}+(1-\alpha)\|\cdot\|_{B(X,Y)/ K(X,Y)}. \label{renorm}\end{align}
Denote $B(X,Y)/ K(X,Y)$ by $Q(X,Y)$. Then we consider the ball-covering property of the renormed space $Z_\alpha=(B(X,Y),\|\cdot\|_{\alpha})$.

\begin{thm}\label{thm3.12}
Let $X$ be a Banach space with $X^\ast$ separable and $Y$ be a separable Banach space with the $(2-\varepsilon)$-UBAP for any $\varepsilon>0$. Then $B(X,Y)$ has the $(2,\varepsilon-\sigma)$-UBCP for all $0<\sigma<\varepsilon$.
\end{thm}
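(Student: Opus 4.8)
The plan is to build an explicit countable ball-covering of $S_{B(X,Y)}$ whose balls all have radius at most $2$ and stay a fixed distance $\varepsilon-\sigma$ away from the origin, using the block unconditional Schauder frame of $Y$ supplied by Lemma~\ref{decomposable}. Fix a small $\eta>0$ (to be chosen with $2\eta$ comfortably below $\varepsilon-\sigma$), apply the hypothesis with constant $\lambda=2-\varepsilon$ to get, by Lemma~\ref{lem3.4}, an approximating sequence $\{S_n\}$ on $Y$ with $\limsup_n\|id_Y-2S_n\|\le 2-\varepsilon+\eta$; equivalently, from Lemma~\ref{decomposable}, a Schauder frame $\{(y_n,g_n)\}$ of $Y$ with block unconditional frame bound and frame bound both at most $2-\varepsilon+\eta$, with $\|y_n\|=1$, $\|g_n\|\le 1$. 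Since $X^\ast$ is separable, fix a countable dense set $\{f_j\}_{j\in\N_+}\subseteq S_{X^\ast}$; since $Y$ is separable, fix a countable dense set $\{u_m\}_{m\in\N_+}$ in $S_Y$. The candidate centers will be finite rank operators of the form $c\cdot f_j\otimes u_m$ together with the ``reflection'' operators $T-2 R_N T$ where $R_N$ denotes the canonical projection $\sum_{n=1}^N g_n\otimes y_n$ associated to the frame, suitably rescaled; the point of the reflection trick is exactly that $\|id-2R_N\|$ is close to $2-\varepsilon$, so a ball of radius $2$ centered at $-(id_Y-2R_N)\circ T = (2R_N-id_Y)T$ is guaranteed to miss a neighbourhood of $0$ while still capturing every operator whose ``tail'' $(id-R_N)T$ is small.

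The main steps, in order. First, a reduction: every $T\in S_{B(X,Y)}$ has a well-approximating finite-rank piece, namely $R_N T\to T$ strongly, so pick $N=N(T)$ with $\|(id_Y-R_N)T\|$ small on a fixed finite $\varepsilon$-net of $B_X$ — but since we cannot cover all of $B_X$ this way, we instead argue pointwise: for a fixed $x\in B_X$ and $Tx$ we use density of $\{u_m\}$ to approximate $Tx/\|Tx\|$, and density of $\{f_j\}$ to approximate the functional witnessing $\|T\|$. Second, for each triple $(j,m,N)$ and each rational $c$ in $[0,2]$ (or a countable dense set of scalars) form the center $A_{j,m,N,c} = (2R_N-id_Y)\circ(c\, f_j\otimes u_m)$, a rank-$\le N{+}1$ operator; the collection of all such centers is countable. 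Assign radius $r=2$ to each and check $\|A_{j,m,N,c}\|\ge \|id_Y-2R_N\|^{-1}$-type lower bound is not what we want — rather, we need $\|A_{j,m,N,c}\|>2-(\varepsilon-\sigma)$, which will follow because $\|(2R_N-id_Y)\xi\|\ge (2-\varepsilon-\eta)\|\xi\|$ is false in general, so here is the actual device: bound $\|A - T\|\le 2$ from above for a good choice of center, and bound $\|A\|$ from \emph{below} by testing against the frame coefficient functionals, using that on the range of $R_N$ the operator $2R_N-id$ acts as $+id$ and off it as $-id$. Third, verify the covering: given $T\in S_{B(X,Y)}$, choose $N$ so the tail is within $\eta$ in the relevant directions, choose $(j,m,c)$ so that $c\,f_j\otimes u_m$ approximates $R_N T$ within $\eta$ in operator norm (possible because $R_N T$ has finite rank $\le N$ and lives in a fixed finite-dimensional space $R_N Y$, on which rank-one approximations of bounded families are norm-dense up to $\eta$ — this needs a small compactness/covering argument in the finite-dimensional range), and then estimate
\[
\|T - A_{j,m,N,c}\| \le \|T - (2R_N-id_Y)R_NT\| + \|(2R_N-id_Y)(R_NT - c f_j\otimes u_m)\|.
\]
Since $(2R_N-id_Y)R_N = R_N$, the first term is $\|T-R_NT\|=\|(id_Y-R_N)T\|$, wait — that is only small in chosen directions, so the genuine center must instead be $(2R_N-id_Y)T$ approximated by $(2R_N-id_Y)(cf_j\otimes u_m)$, and one writes $\|T-(2R_N-id_Y)T\|=\|2(id_Y-R_N)T\|\le 2\|(id_Y-R_N)T\|$, which is where radius $2$ enters, together with $\|(2R_N-id_Y)\|\le 2-\varepsilon+\eta<2$ controlling the perturbation term. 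Simultaneously $\|A_{j,m,N,c}\|\ge \|(2R_N-id_Y)T\| - \eta'$ and $\|(2R_N-id_Y)T\|\ge \|2R_NT - T\|$; choosing the net so that $\|R_N T x\|$ is bounded below for the norming $x$ forces this to exceed $2-(\varepsilon-\sigma)$, giving $\|A_{j,m,N,c}\|>2-(\varepsilon-\sigma)$, i.e. $B(0,\varepsilon-\sigma)\cap B(A_{j,m,N,c},2)=\emptyset$.

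The hard part will be the third step: reconciling the two competing demands on a \emph{single} center — it must be within distance $2$ of $T$ (which wants $N$ large so the tail vanishes, in \emph{all} directions at once, impossible without compactness of $B_X$) and it must have norm bounded below by $2-(\varepsilon-\sigma)$ (which wants $T$ to genuinely use a direction where $R_N$ acts nontrivially). The resolution is to enlarge the countable family: index centers not by a single $N$ but by \emph{finitely} many frame-projection data and a finite list $(f_{j_1},\dots,f_{j_p})$ of test functionals, so that "tail small in finitely many directions" suffices; one then shows that for the covering it is enough to control $T$ on a finite set of functionals because $\|id_Y-2R_N\|\to$ a value $<2$ means the radius-$2$ ball has slack $\varepsilon-\eta>\sigma$ to absorb the uncontrolled directions — precisely, $\|(id_Y-R_N)T\|\le \|(id_Y-2R_N)T\| \cdot(\text{something})$ plus $\|R_NT\|$, and the uniform bound $\|id_Y-2R_N\|\le 2-\varepsilon+\eta$ caps the part of $\|T-(2R_N-id_Y)T\|$ that we cannot see. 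Making this bookkeeping precise — i.e. showing the countable list of centers genuinely covers the whole sphere with radius exactly $2$ and uniform gap $\varepsilon-\sigma$ — is the crux; once the counting and the finite-dimensional rank-one density lemma are in place, the two norm inequalities are routine applications of $\|\cdot\|\le 2$ and the frame bound $2-\varepsilon+\eta$, with $\eta<\sigma/2$ chosen at the outset.
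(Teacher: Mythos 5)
Your proposal contains two genuine gaps, and you have correctly flagged the second one as the crux without resolving it. First, your centers $A_{j,m,N,c}=(2R_N-id_Y)\circ(c\,f_j\otimes u_m)$ are rank-one operators (their range is spanned by $(2R_N-id_Y)u_m$), and rank-one operators cannot approximate the pieces $R_NT$, which have rank up to $N$, in operator norm; the claim that ``rank-one approximations of bounded families are norm-dense'' in the finite-dimensional range $R_NY$ is false whenever $R_NT$ genuinely has rank $\geq 2$, which happens for most $T$ on the unit sphere. The paper's centers are instead the normalized sums $2\sum_{n=1}^{N}x^{*}_{m_n}\otimes u_{l_n}\big/\norm{\sum_{n=1}^{N}x^{*}_{m_n}\otimes u_{l_n}}$, where each $x^{*}_{m_n}$ approximates $f_nT$ in a countable dense subset of $B_{X^{*}}$ and each $u_{l_n}$ approximates $y_n$ in a countable dense subset of $B_Y$; this family is still countable and approximates $R_NT=\sum_{n=1}^{N}f_nT\otimes y_n$ term by term.

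Second, your distance estimate collapses: with center $C=(2R_N-id_Y)T$ one gets $T-C=2(id_Y-R_N)T$, and $\norm{(id_Y-R_N)T}$ need not be small because $R_NT\to T$ only strongly, not in operator norm. Your fallback --- controlling the tail on finitely many functionals and letting the slack absorb the rest --- is precisely the step you leave unproved, and it cannot be repaired in that form. The paper's resolution is to take $C=\frac{2}{\xi}R_{N_{k_0}}T$ (then replaced by its countable approximant), where $\xi=\norm{R_{N_{k_0}}T}$ is forced close to $\norm{T}=1$ by choosing $k_0$ large: if $\norm{R_{N_k}T}$ stayed uniformly below $1$, strong convergence would force $\norm{T}<1$. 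Then $T-C=\bigl(1-\frac{2}{\xi}\bigr)R_{N_{k_0}}T+(id_Y-R_{N_{k_0}})T$ is a block multiplier of the frame expansion of $T$ with coefficients $1-2/\xi$ on the initial blocks and $1$ on the tail, so Lemma \ref{lem3.4} bounds its norm by $(2-\varepsilon+\varepsilon_1)\max\{|1-2/\xi|,1\}$, which is close to $2-\varepsilon$; the tail is never required to be small in norm. The center has norm exactly $2$ by normalization, so no separate lower bound on $\norm{C}$ is needed, and the uniform gap $\varepsilon-\sigma$ follows. Your instinct that the bound $\norm{id_Y-2R_N}\leq 2-\varepsilon+\eta$ drives the argument is right, but it must enter through the difference $T-2R_NT=(id_Y-2R_N)T$, i.e., with the reflection appearing in $T-C$, not in the center itself.
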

\begin{proof}
By Lemma \ref{decomposable}, for all $0<\varepsilon_1<\sigma/2$, there exists a block unconditional Schauder frame $\{(y_n,f_n)\}_{n\in\mathbb{N}} \subseteq Y\times Y^\ast$ such that
$$y=\sum_{n=1}^{\infty}f_n(y) y_n=\sum_{n=1}^{\infty}(f_n\otimes y_n)(y)$$ for all $y\in Y$ and there exists a subsequence $\{N_k\}_{k=1}^{\infty}$ of $\N_+$ such that $$\sup_{N\in\N_+,\theta_k=\pm1}\left\|\sum_{k=1}^{N}\theta_k \sum_{n=N_k+1}^{N_{k+1}}f_n\otimes y_n\right\|\leq 2-\varepsilon+\varepsilon_1  \text{ and } \sup_{N\in\mathbb{N}_+}\left\|\sum_{n=1}^{N}f_n \otimes y_n\right\|\leq
2-\varepsilon+\varepsilon_1.$$

For all $T \in B(X,Y)$ with $\|T\|=1$ and for all $x \in X$, we have
  \[Tx=\sum_{n=1}^{\infty}\left(f_n T\otimes y_n\right) (x).\]
Now we show that for all $0<\varepsilon_2< \sigma/2$, there exists $k_0$ such that
 \[\max\left\{1-\frac{\varepsilon_2}{16},\frac{3}{4}\right\}< \norm{\sum_{n=1}^{N_{k_0}}f_nT\otimes y_n} \leq (2-\varepsilon+\varepsilon_1)\|T\|=2-\varepsilon+\varepsilon_1.\] We only need to show that there exists $k_0$ such that $$\max\left\{1-\frac{\varepsilon_2}{16},\frac{3}{4}\right\}< \norm{\sum_{n=1}^{N_{k_0}}f_nT\otimes y_n}.$$ Suppose on the contrary, for all $k$, we have
 $$\norm{\sum_{n=1}^{N_k}f_nT\otimes y_n}\leq \max\left\{1-\frac{\varepsilon_2}{16},\frac{3}{4}\right\}.$$
 Let $T_{k}=\sum_{n=1}^{N_k}f_nT\otimes y_n$, then $T$ is the limit of $T_{k}$ in the strong operator topology, that is, $\lim_{k\rightarrow\infty}\|T_kx-Tx\|=0$ for all $x\in X$. We claim that $\|T\|\leq\liminf_{k\rightarrow \infty}\|T_{k}\|$. Since $$\|Tx\|\leq \|Tx-T_kx\|+\|T_kx\|,$$ we have $\|Tx\|\leq \liminf_{k\rightarrow\infty}\|T_kx\|$.
 Thus $\|T\|\leq \liminf_{k\rightarrow\infty}\|T_k\|$. Since $\|T_{k}\|\leq \max\left\{1-\varepsilon_2 /16,3/4\right\}<1$ for all $k$, we have $\|T\|<1$, and it is a contradiction.

 Since $X^*$ and $Y$ are separable, there exists a countable dense subset $\mathcal{A}=\{x_n^*\}_{n=1}^\infty$ in $B_{X^\ast}$ and a countable dense subset $\mathcal{B}=\{u_n\}_{n=1}^\infty$ in $B_Y$.  For all  $n=1,2,\cdots$, there exists $x^{*}_{m_n} \in \mathcal{A}$, $m_{n}\in\N_+$ and $u_{l_n}\in \mathcal{B}$, $l_{n}\in\N_+$  such that
  $$\norm{x^{*}_{m_n}-f_{n} T} <\min\left\{\frac{1}{8N_{k_0}},\frac{\varepsilon_2}{64N_{k_0}}\right\} \text{ and } \norm{u_{l_n}-y_n} <\min\left\{\frac{1}{8N_{k_0}},\frac{\varepsilon_2}{64N_{k_0}}\right\}.$$
  Thus \[\norm{\sum_{n=1}^{N_{k_0}}f_{n}T\otimes y_{n} -\sum_{n=1}^{N_{k_0}}x^{*}_{m_n}\otimes u_{l_n}}< \min\left\{\frac{1}{4},\frac{\varepsilon_2}{32}\right\}.\]
  Let $\xi_{k_0}=\norm{\sum_{n=1}^{N_{k_0}} x^{*}_{m_n}\otimes u_{l_n}}$, then we have
  \[\max\left\{1-\frac{3\varepsilon_2}{32},\frac{1}{2}\right\}<\xi_{k_0}< \min\left\{\frac{9}{4},2+\frac{\varepsilon_2}{16}\right\}.\]
 We will show that the countable set
  \[\left\{\frac{2\sum_{i=1}^{m} g_{i}^{\ast}\otimes s_{i}}{\norm{\sum_{i=1}^{m} g_{i}^{\ast}\otimes s_{i}}}: m\in \mathbb{N}_+, g_{i}^{\ast}\in\mathcal{A}\ (1\leq i\leq m), s_{i}\in\mathcal{B}\ (1\leq i\leq m)\right\}\] is a set of BCP points for $B(X,Y)$.

By triangular inequality and a regular computation, we have
\begin{align}
    &\quad\norm{T - \frac{2\sum_{n=1}^{N_{k_0}} x^{*}_{m_n}\otimes u_{l_n}}{\norm{\sum_{n=1}^{N_{k_0}} x^{*}_{m_n}\otimes u_{l_n}}}}\notag\\
    &=\norm{\sum_{n=1}^{N_{k_0}} f_{n} T \otimes y_{n} -\sum_{n=1}^{N_{k_0}} \frac{2 }{\xi_{k_0}} x^{*}_{m_n}\otimes u_{l_n}+\sum_{n=N_{k_0}+1}^{\infty} f_{n} T\otimes y_{n}}\notag\\
    &\leq \norm{\frac{2}{\xi_{k_0}}\left(\sum_{n=1}^{N_{k_0}} f_{n} T\otimes y_{n} -\sum_{n=1}^{N_{k_0}} x^{*}_{m_n}\otimes u_{l_n} \right) }\notag\\
    &\quad+\norm{\left(1-\frac{2}{\xi_{k_0}} \right)\sum_{n=1}^{N_{k_0}} f_{n} T\otimes y_{n}+\sum_{n=N_{k_0}+1}^{\infty} f_{n} T\otimes y_{n} } \notag\\
    &\leq \norm{\frac{2}{\xi_{k_0}}\left(\sum_{n=1}^{N_{k_0}} f_{n} T\otimes y_{n} -\sum_{n=1}^{N_{k_0}} x^{*}_{m_n}\otimes u_{l_n} \right)}\notag\\
    &\quad+\left(2-\varepsilon+\varepsilon_1\right)\max\left\{\left|1-\frac{2}{\xi_{k_0}}\right|,1\right\} \label{ieq1}\\
    &<\frac{\varepsilon_2}{8\xi_{k_0}}+\left(2-\varepsilon+\varepsilon_1\right)
    \left(\frac{2}{\xi_{k_0}}-1\right)\label{ieq3}\\
    &=2-\varepsilon+\varepsilon_1+(2-\varepsilon+\varepsilon_1)
    \left(\frac{2}{\xi_{k_0}}-2\right)+\frac{\varepsilon_2}{8\xi_{k_0}}\notag\\
    &< 2-\varepsilon+\varepsilon_1+(8-8\xi_{k_0})+\frac{1}{4} \varepsilon_2\notag\\
    &< 2-\varepsilon+\varepsilon_1+\frac{3}{4}\varepsilon_2+\frac{1}{4} \varepsilon_2\notag\\
    &<2-\varepsilon+\sigma \label{ieq2}\\
    %&=2-(\varepsilon-\sigma)\notag\\
    &< 2\notag,
\end{align}
where inequality (\ref{ieq1}) is obtained by Lemma \ref{lem3.4}. In (\ref{ieq3}), without loss of generality, we can always assume $\xi_{k_0}\leq1$. And inequality (\ref{ieq2}) shows that $B(X,Y)$ has the $(2,\varepsilon-\sigma)$-UBCP.
\end{proof}

\begin{thm}\label{thm3.13}
  Let $X$ be a Banach space with $X^\ast$ separable and $Y$ be a separable Banach space with the $(2-\varepsilon)$-UBAP for any $\varepsilon>0$. Then for all $1-\varepsilon/2 < \alpha \leq 1$, the renormed space $Z_\alpha=\left(B(X,Y), \|\cdot\|_\alpha \right)$ defined in (\ref{renorm}) has the $(2\alpha,2\alpha+\varepsilon-2-\sigma)$-UBCP for all $0<\sigma<2\alpha+\varepsilon-2$.
  \end{thm}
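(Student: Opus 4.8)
Throughout I write $\|\cdot\|$ for $\|\cdot\|_{B(X,Y)}$ and $\|\cdot\|_Q$ for the quotient norm on $Q(X,Y)$. The plan is to follow the proof of Theorem~\ref{thm3.12} almost verbatim, but to route the estimate through the $B(X,Y)$-norm in a way that also accounts for the quotient part of $\|\cdot\|_\alpha$. First note that if $\|T\|_\alpha=1$, then $1=\alpha\|T\|+(1-\alpha)\|T\|_Q$ together with $0\le\|T\|_Q\le\|T\|$ forces $1\le\|T\|\le 1/\alpha$; this two-sided control of $\|T\|$ is the crucial new feature. The candidate BCP points are again normalized finite-rank operators: fix countable dense sets $\mathcal A\subseteq\frac1\alpha B_{X^\ast}$ and $\mathcal B\subseteq B_Y$ (available since $X^\ast$, $Y$ are separable — note $\mathcal A$ must be taken in $\frac1\alpha B_{X^\ast}$, not $B_{X^\ast}$, because the coefficient functionals $f_nT$ of a $\|\cdot\|_\alpha$-unit $T$ can now have norm up to $1/\alpha$), and set
\[
\mathcal P=\Big\{\, 2\alpha\,\big(\textstyle\sum_{i=1}^m g_i^\ast\otimes s_i\big)\big/\big\|\sum_{i=1}^m g_i^\ast\otimes s_i\big\|_\alpha \ :\ m\in\N_+,\ g_i^\ast\in\mathcal A,\ s_i\in\mathcal B,\ \textstyle\sum_i g_i^\ast\otimes s_i\neq0 \,\Big\}.
\]
Since finite-rank operators are compact, $\|\cdot\|_\alpha=\alpha\|\cdot\|$ on them, so every $p\in\mathcal P$ has $\|p\|_\alpha=2\alpha$ and is of the form $p=(2/\xi)K$ with $K$ finite rank and $\xi=\|K\|$. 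It suffices to produce, for each fixed $\sigma_1\in(0,\sigma)$ and each $T$ with $\|T\|_\alpha=1$, some $p\in\mathcal P$ with $\|T-p\|_\alpha<2-\varepsilon+\sigma_1$: then the closed balls $B(p,2-\varepsilon+\sigma_1)$, $p\in\mathcal P$, cover $S_{Z_\alpha}$, have radii $<2\alpha$, and (since $\|p\|_\alpha-(2-\varepsilon+\sigma_1)=2\alpha+\varepsilon-2-\sigma_1>2\alpha+\varepsilon-2-\sigma$) are disjoint from $B(0,\,2\alpha+\varepsilon-2-\sigma)$, which is precisely the $(2\alpha,2\alpha+\varepsilon-2-\sigma)$-UBCP (here $2\alpha+\varepsilon-2>0$ by the hypothesis $\alpha>1-\varepsilon/2$).

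The key reduction is an identity valid because $p$ is compact, so $\|T-p\|_Q=\|T\|_Q$:
\[
\|T-p\|_\alpha=\alpha\|T-p\|+(1-\alpha)\|T\|_Q=1+\alpha\big(\|T-p\|-\|T\|\big),
\]
using $(1-\alpha)\|T\|_Q=1-\alpha\|T\|$. Thus I only need a good bound on the \emph{excess} $\|T-p\|-\|T\|$ of the $B(X,Y)$-norm, for a suitable $p\in\mathcal P$. For this I reproduce the construction from Theorem~\ref{thm3.12}: apply Lemma~\ref{decomposable} to $Y$ (with $\lambda=2-\varepsilon$ and auxiliary $\varepsilon_1$) to get a block unconditional Schauder frame $\{(y_n,f_n)\}$ of $Y$ with block bound and frame bound $\le 2-\varepsilon+\varepsilon_1$; write $Tx=\sum_n(f_nT\otimes y_n)(x)$ and note $S_kT:=\sum_{n=1}^{N_k}f_nT\otimes y_n$ equals the composition of the $k$-th partial-sum operator of the frame on $Y$ with $T$, and $S_kT\to T$ strongly. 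By lower semicontinuity of the operator norm under strong-operator convergence (exactly as argued in Theorem~\ref{thm3.12}), choose $k_0$ with $\|S_{k_0}T\|\ge(1-\varepsilon_2/16)\|T\|$; approximate $S_{k_0}T=\sum_{n=1}^{N_{k_0}}f_nT\otimes y_n$ (the $f_nT$ lie in $\frac1\alpha B_{X^\ast}$, the $y_n$ in $B_Y$) by $K=\sum_{n=1}^{N_{k_0}}x_{m_n}^\ast\otimes u_{l_n}$ with $x_{m_n}^\ast\in\mathcal A$, $u_{l_n}\in\mathcal B$ and $\|S_{k_0}T-K\|<\eta$ for $\eta$ small; set $\xi=\|K\|$, $p=(2/\xi)K\in\mathcal P$. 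Splitting at $N_{k_0}$ and bounding the resulting ``tail'' operator by Lemma~\ref{lem3.4} applied to $(1-\tfrac2\xi)S_{k_0}^{(Y)}+(id_Y-S_{k_0}^{(Y)})$ on $Y$ (an $\ell^\infty$-combination of the frame blocks of $Y$, with $\ell^\infty$-coefficient $\max\{|1-2/\xi|,1\}$), one gets, exactly as for inequality~(\ref{ieq1}),
\[
\|T-p\|\ \le\ \tfrac2\xi\,\|S_{k_0}T-K\|+\max\Big\{\big|1-\tfrac2\xi\big|,\,1\Big\}\,(2-\varepsilon+\varepsilon_1)\,\|T\|.
\]

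The only genuinely new estimate is the handling of the factor $M:=\max\{|1-2/\xi|,1\}\,\|T\|$. Since $\|T\|\ge1$ and $\xi\ge\|S_{k_0}T\|-\eta\ge(1-\varepsilon_2/16)\|T\|-\eta\ge(1-\varepsilon_2/8)\|T\|$ (taking $\eta\le\varepsilon_2/16$), an elementary check gives: if $\xi\ge1$ then $|1-2/\xi|\le1$ and $M=\|T\|$; if $\xi<1$ (which forces $\|T\|$ close to $1$) then $M\le\tfrac2{1-\varepsilon_2/8}-1\le1+\varepsilon_2/3\le\|T\|+\varepsilon_2/3$. Hence $M\le\|T\|+\varepsilon_2/3$ in all cases, and also $\xi\ge1/2$ gives $\tfrac2\xi\|S_{k_0}T-K\|\le4\eta$. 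Substituting into the identity above,
\[
\|T-p\|_\alpha\ \le\ 1+\alpha\Big(4\eta+\tfrac23\varepsilon_2(2-\varepsilon+\varepsilon_1)\Big)+\alpha\,(1-\varepsilon+\varepsilon_1)\,\|T\|,
\]
and here the hypothesis pays off: $1-\varepsilon+\varepsilon_1>0$ and $\|T\|\le1/\alpha$, so the last term is $\le 1-\varepsilon+\varepsilon_1$, whence $\|T-p\|_\alpha\le 2-\varepsilon+\varepsilon_1+\alpha\big(4\eta+\tfrac23\varepsilon_2(2-\varepsilon+\varepsilon_1)\big)<2-\varepsilon+\sigma_1$ once $\varepsilon_1,\varepsilon_2,\eta$ are fixed small enough (e.g.\ $\varepsilon_1<\sigma_1/4$, $\varepsilon_2<\sigma_1/4$, $\eta<\sigma_1/64$). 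This completes the plan.

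I expect the work to be bookkeeping rather than conceptual. The one place needing real care is the uniform control of $M=\max\{|1-2/\xi|,1\}\|T\|$ over the whole admissible range $1\le\|T\|\le1/\alpha$ — in particular near $\|T\|=1$, where $\xi$ may slip below $1$ and $|1-2/\xi|$ exceeds $1$ — and the observation that the extra factor of $\|T\|$ appearing here (which is simply $1$ in Theorem~\ref{thm3.12}) is absorbed precisely by $\|T\|\le1/\alpha$; it is exactly this cancellation that pins the threshold to $\alpha>1-\varepsilon/2$ and fixes the separation at $2\alpha+\varepsilon-2-\sigma$. Everything else — the choice of $k_0$ via semicontinuity, the density approximation, and the use of Lemmas~\ref{lem3.4} and~\ref{decomposable} — is identical to the case $\alpha=1$ treated in Theorem~\ref{thm3.12}.
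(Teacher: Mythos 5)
Your proposal is correct and follows essentially the same route as the paper's own proof: the same block unconditional Schauder frame from Lemma \ref{decomposable}, the same candidate centers $2K/\|K\|$ built from countable dense sets, the same choice of $k_0$ by lower semicontinuity of the norm under strong-operator convergence, and the same tail estimate via Lemma \ref{lem3.4}. Your only deviations --- rewriting $\|T-p\|_\alpha = 1+\alpha(\|T-p\|-\|T\|)$ using compactness of $p$, invoking the two-sided bound $1\le\|T\|\le 1/\alpha$ explicitly, taking the dense set in $\frac{1}{\alpha}B_{X^\ast}$ rather than $B_{X^\ast}$, and treating the cases $\xi\ge 1$ and $\xi<1$ separately instead of assuming $\xi\le 1$ ``without loss of generality'' --- are bookkeeping refinements of the same estimate that, if anything, tighten a couple of points the paper's version glosses over.
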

  \begin{proof}
  We only need to prove the case when $1-\varepsilon/2 < \alpha < 1$ as the case $\alpha=1$ has been proved in Theorem \ref{thm3.12}. By Lemma \ref{decomposable}, for all $0<\varepsilon_1<\sigma/2$, there exists a block unconditional Schauder frame $\{(y_n,f_n)\}_{n\in\mathbb{N}} \subseteq Y\times Y^\ast$ such that $$y=\sum_{n=1}^{\infty}f_n(y) y_n=\sum_{n=1}^{\infty}(f_n\otimes y_n)(y)$$ for all $y\in Y$ and there exists a subsequence $\{N_k\}_{k=1}^{\infty}$ of $\N_+$ such that $$\sup_{N\in\N,\theta_k=\pm1}\left\|\sum_{k=1}^{N}\theta_k \sum_{n=N_k+1}^{N_{k+1}}f_n\otimes y_n\right\|\leq 2-\varepsilon+\varepsilon_1 \text{ and } \sup_{N\in\mathbb{N}_+}\left\|\sum_{n=1}^{N}f_n \otimes y_n\right\|\leq
  2-\varepsilon+\varepsilon_1.$$

  For all $T \in B(X,Y)$ with $\|T\|_\alpha=1$, we have $\|T\| \geq 1$ and $\|T\|_{Q(X,Y)} \leq 1$. Since for all $x \in X$, we have
    \[Tx=\sum_{n=1}^{\infty}\left(f_n T\otimes y_n\right) (x).\]
  The following existence of $k_0$ can be proved by the same way as in Theorem \ref{thm3.12}. For all $0<\varepsilon_2< \sigma/4$, there exists $k_0$ such that
   \[\max\left\{\|T\|-\frac{\varepsilon_2}{16},\frac{3}{4}\right\}< \norm{\sum_{n=1}^{N_{k_0}}f_nT\otimes y_n} \leq (2-\varepsilon+\varepsilon_1)\|T\|.\]

   Since $X^*$ and $Y$ are separable, there exists a countable dense subset $\mathcal{A}=\{x_n^*\}_{n=1}^\infty$ in $B_{X^\ast}$ and a countable dense subset $\mathcal{B}=\{u_n\}_{n=1}^\infty$ in $B_Y$.  For all $n=1,2,\cdots$, there exists $x^{*}_{m_n} \in \mathcal{A}$, $m_{n}\in\N_+$ and $u_{l_n}\in \mathcal{B}$, $l_{n}\in\N_+$  such that
   $$\norm{x^{*}_{m_n}-f_{n} T} <\min\left\{\frac{1}{8N_{k_0}},\frac{\varepsilon_2}{64N_{k_0}}\right\} \text{ and } \norm{u_{l_n}-y_n} <\min\left\{\frac{1}{8N_{k_0}},\frac{\varepsilon_2}{64N_{k_0}}\right\}.$$
   Thus \[\norm{\sum_{n=1}^{N_{k_0}}f_{n}T\otimes y_{n} -\sum_{n=1}^{N_{k_0}}x^{*}_{m_n}\otimes u_{l_n}}< \min\left\{\frac{1}{4},\frac{\varepsilon_2}{32}\right\}.\]
   Let $\xi_{k_0}=\norm{\sum_{n=1}^{N_{k_0}} x^{*}_{m_n}\otimes u_{l_n}}$, then we have
   \[1-\frac{3\varepsilon_2}{32}\leq\max\left\{\|T\|-\frac{3\varepsilon_2}{32},\frac{1}{2}\right\}<\xi_{k_0}< \min\left\{\frac{17}{4},2\|T\|+\frac{\varepsilon_2}{16}\right\}.\]
  We will show that the countable set
   \[\left\{\frac{2\sum_{i=1}^{m} g_{i}^{\ast}\otimes s_{i}}{\norm{\sum_{i=1}^{m} g_{i}^{\ast}\otimes s_{i}}}: m\in \mathbb{N}_+, g_{i}^{\ast}\in\mathcal{A}\ (1\leq i\leq m), s_{i}\in\mathcal{B}\ (1\leq i\leq m)\right\}\] is a set of BCP points for $Z_\alpha$ ($1-\varepsilon/2 < \alpha \leq 1$).

 By triangular inequality and a regular computation, we have
 \begin{align}
     &\quad\norm{T - \frac{2\sum_{n=1}^{N_{k_0}} x^{*}_{m_n}\otimes u_{l_n}}{\norm{\sum_{n=1}^{N_{k_0}} x^{*}_{m_n}\otimes u_{l_n}}}}_\alpha\notag\\
     &=\alpha\norm{T - \frac{2\sum_{n=1}^{N_{k_0}} x^{*}_{m_n}\otimes u_{l_n}}{\norm{\sum_{n=1}^{N_{k_0}} x^{*}_{m_n}\otimes u_{l_n}}}}+(1-\alpha)\norm{T - \frac{2\sum_{n=1}^{N_{k_0}} x^{*}_{m_n}\otimes u_{l_n}}{\norm{\sum_{n=1}^{N_{k_0}} x^{*}_{m_n}\otimes u_{l_n}}}}_{Q(X,Y)}\notag\\
     &=\alpha\norm{\sum_{n=1}^{N_{k_0}} f_{n} T \otimes y_{n} -\sum_{n=1}^{N_{k_0}} \frac{2}{\xi_{k_0}} x^{*}_{m_n}\otimes u_{l_n}+\sum_{n=N_{k_0}+1}^{\infty} f_{n} T\otimes y_{n}}\notag\\
     &\quad+(1-\alpha)\norm{T - \frac{2\sum_{n=1}^{N_{k_0}} x^{*}_{m_n}\otimes u_{l_n}}{\xi_{k_0}}}_{Q(X,Y)}\notag\\
     &\leq\alpha\norm{\frac{2}{\xi_{k_0}}\left(\sum_{n=1}^{N_{k_0}} f_{n} T\otimes y_{n} -\sum_{n=1}^{N_{k_0}} x^{*}_{m_n}\otimes u_{l_n} \right)}\notag\\
     &\quad+\alpha\norm{\left(1-\frac{2}{\xi_{k_0}}\right)\sum_{n=1}^{N_{k_0}} f_{n} T \otimes y_{n}+\sum_{n=N_{k_0}+1}^{\infty} f_{n} T\otimes y_{n}}\notag\\
     &\quad+(1-\alpha)\norm{T - \frac{2\sum_{n=1}^{N_{k_0}} x^{*}_{m_n}\otimes u_{l_n}}{\xi_{k_0}}}_{Q(X,Y)}\notag\\
     %&=\norm{\sum_{n=1}^{N_{k_0}} f_{n} T \otimes y_{n} -\sum_{n=1}^{N_{k_0}} \frac{2 }{\xi} x^{*}_{m_n}\otimes u_{l_n}+\sum_{n=N_{k_0}+1}^{\infty} f_{n} T\otimes y_{n}}_\alpha\notag\\
     %&\leq \norm{\frac{2}{\xi}\left(\sum_{n=1}^{N_{k_0}} f_{n} T\otimes y_{n} -\sum_{n=1}^{N_{k_0}} x^{*}_{m_n}\otimes u_{l_n} \right) }_\alpha+\norm{\left(1-\frac{2}{\xi} \right)\sum_{n=1}^{N_{k_0}} f_{n} T\otimes y_{n}+\sum_{n=N_{k_0}+1}^{\infty} f_{n} T\otimes y_{n} }_\alpha \notag\\
     &\leq \alpha\norm{\frac{2}{\xi_{k_0}}\left(\sum_{n=1}^{N_{k_0}} f_{n} T\otimes y_{n} -\sum_{n=1}^{N_{k_0}} x^{*}_{m_n}\otimes u_{l_n} \right)}\notag\\
     &\quad+\alpha\left(2-\varepsilon+\varepsilon_1\right)\max\left
     \{\left|1-\frac{2}{\xi_{k_0}}\right|,1\right\}\|T\|+(1-\alpha)\|T\|_{Q(X,Y)} \label{ieq4}\\
     &< \frac{\varepsilon_2}{8\xi_{k_0}}+\left(2-\varepsilon+\varepsilon_1\right)
     \left(\frac{2}{\xi_{k_0}}-1\right)\alpha\|T\|+(1-\alpha)\|T\|_{Q(X,Y)}\label{ieq5}\\
     &\leq \frac{\varepsilon_2}{8\xi_{k_0}}+\left(2-\varepsilon+\varepsilon_1\right)\left(\frac{2}{\xi_{k_0}}-1\right)\notag\\
     &=2-\varepsilon+\varepsilon_1+\frac{\varepsilon_2}{8\xi_{k_0}}+(2-\varepsilon+\varepsilon_1)
     \left(\frac{2}{\xi_{k_0}}-2\right)\notag\\
     &< 2-\varepsilon+\frac{\sigma}{2}+\frac{\sigma}{8}+\frac{3\varepsilon_2}{4}\notag\\
     &< 2-\varepsilon+\sigma \label{ieq6}\\
     &= 2\alpha-\left(2\alpha+\varepsilon-2-\sigma\right)\notag\\
     &< 2\alpha \notag\\
     &=\norm{\frac{2\sum_{n=1}^{N_{k_0}} x^{*}_{m_n}\otimes u_{l_n}}{\norm{\sum_{n=1}^{N_{k_0}} x^{*}_{m_n}\otimes u_{l_n}}}}_\alpha, \notag
   \end{align}
where inequality (\ref{ieq4}) is obtained by Lemma \ref{lem3.4}. In inequality (\ref{ieq5}), without loss of generality, we can always assume $\xi_{k_0}\leq1$. And inequality (\ref{ieq6}) shows that $Z_\alpha$ ($1-\varepsilon/2 < \alpha \leq 1$) has the $(2\alpha,2\alpha+\varepsilon-2-\sigma)$-UBCP.
\end{proof}

%In particular, the basis property is a kind of approximation property. We know that every Banach space with a basis has the bounded approximation property, and it is immediate that every Banach space with an $\lambda$-unconditional basis has the $\lambda$-UBAP.
Thus we have the following result.

\begin{cor}\label{main1}
Let $X$ be a Banach space with $X^\ast$ separable and $Y$ be a Banach space with an $(2-\varepsilon)$-unconditional basis for any $\varepsilon>0$. Then for all $1-\varepsilon/2<\alpha\leq1$, the renormed space $Z_\alpha=(B(X,Y),\|\cdot\|_\alpha)$ has the $(2\alpha, 2\alpha+\varepsilon-2-\sigma)$-UBCP for all $0<\sigma<2\alpha+\varepsilon-2$.
\end{cor}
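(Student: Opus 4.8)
The plan is to deduce the statement directly from Theorem~\ref{thm3.13}; the only thing that requires checking is that a Banach space admitting a $(2-\varepsilon)$-unconditional basis is automatically separable and enjoys the $(2-\varepsilon)$-UBAP, after which no further work is needed.

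First I would record the reduction. Fix $\varepsilon>0$ and let $\{e_n\}_{n=1}^\infty$ be an unconditional basis of $Y$ with unconditional constant $\textbf{ubc}(\{e_n\})\leq 2-\varepsilon$, with associated coordinate functionals $\{e_n^\ast\}_{n=1}^\infty$. Since finite rational linear combinations of the $e_n$ are dense in $Y$, the space $Y$ is separable. Next put $S_n=\sum_{i=1}^{n} e_i^\ast\otimes e_i$, the $n$-th partial-sum projection; then $\{S_n\}_{n=1}^\infty$ is an approximating sequence in the sense of Lemma~\ref{lem1}, since each $S_n$ has finite rank, $S_n y\to y$ for every $y\in Y$, and $S_m S_n=S_n$ for $n<m$. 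Setting $A_n=S_n-S_{n-1}=e_n^\ast\otimes e_n$ (with $S_0=0$), for every $N\in\N_+$ and every choice of signs $\theta_i=\pm1$ the operator $\sum_{i=1}^{N}\theta_i A_i$ maps $y$ to $\sum_{i=1}^N\theta_i e_i^\ast(y)e_i$, so by the definition of the unconditional constant
\[ \Big\|\sum_{i=1}^{N}\theta_i A_i\Big\|\leq \textbf{ubc}(\{e_n\})\leq 2-\varepsilon . \]
Hence for every $\varepsilon'>0$ one has $\sup_{N\in\N_+,\theta_i=\pm1}\|\sum_{i=1}^N\theta_i A_i\|\leq (2-\varepsilon)+\varepsilon'$, and by Definition~\ref{UMAP} the space $Y$ has the $(2-\varepsilon)$-UBAP.

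With this in hand the hypotheses of Theorem~\ref{thm3.13} are all met: $X^\ast$ is separable, $Y$ is separable, and $Y$ has the $(2-\varepsilon)$-UBAP for the given $\varepsilon>0$. Applying that theorem yields, for all $1-\varepsilon/2<\alpha\leq1$, that $Z_\alpha=(B(X,Y),\|\cdot\|_\alpha)$ has the $(2\alpha,2\alpha+\varepsilon-2-\sigma)$-UBCP for every $0<\sigma<2\alpha+\varepsilon-2$, which is precisely the assertion.

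There is essentially no obstacle here; the corollary is a specialization of Theorem~\ref{thm3.13}. The only point deserving a moment's care is the reduction step — checking that the unconditional basis constant translates exactly into the $(2-\varepsilon)$-UBAP constant of Definition~\ref{UMAP} (which is stated with an extra, arbitrarily small $\varepsilon'$), and noting that the presence of a basis forces separability, so the separability hypothesis on $Y$ in Theorem~\ref{thm3.13} is automatic. Alternatively, one could observe that an unconditional basis is in particular an (unconditional, hence block unconditional) Schauder frame and feed it directly into the argument of Theorem~\ref{thm3.13}, bypassing Lemma~\ref{decomposable}; but invoking Theorem~\ref{thm3.13} as a black box is the cleanest route.
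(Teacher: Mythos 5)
Your proposal is correct and is exactly the derivation the paper intends: the corollary is stated immediately after Theorem~\ref{thm3.13} with no separate proof, the point being precisely that a $(2-\varepsilon)$-unconditional basis makes $Y$ separable and (via the partial-sum projections $S_n$ and the blocks $A_n=e_n^\ast\otimes e_n$) gives $Y$ the $(2-\varepsilon)$-UBAP, so Theorem~\ref{thm3.13} applies verbatim. Your reduction step is the right one and the bound $\sup_{N,\theta_i}\|\sum_{i\le N}\theta_i A_i\|\le \textbf{ubc}(\{e_n\})\le 2-\varepsilon$ is exactly what Definition~\ref{UMAP} requires.
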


\begin{re}\label{rem3.14}
In the proof of Theorem \ref{thm3.12} and Theorem \ref{thm3.13}, we need to control \[\norm{\left(1-\frac{2}{\xi} \right)\sum_{n=1}^{N_{k_0}} f_{n} T\otimes y_{n}+\sum_{n=N_{k_0}+1}^{\infty} f_{n} T\otimes y_{n} }=\norm{\left(id_Y-\frac{2}{\xi}\sum_{n=1}^{N_{k_0}}f_n \otimes y_n\right) \circ T }.\]
%Thus we further consider the Banach spaces with other approximation properties and present more sufficient conditions for $B(X,Y)$ to have the UBCP.Essentially
%Then if $\textbf{BAP}(Y)=1$ and $\lim_{n}\|id_Y-2T_n\| \leq 2-\varepsilon$, a slight modification in the proof of Theorem \ref{thm3.12} actually shows that $B(X,Y)$ has the UBCP. Also, when $\textbf{BAP}(Y)>1$ and $\lim_{n}\|id_Y-2\textbf{BAP}(Y)^{-1}T_n\| =1 $, then it follows that $B(X,Y)$ has the UBCP.
\end{re}

In \cite{J1972}, Johnson showed that every separable Banach space with the CBAP can be renormed to have the commuting MAP. In \cite{C1991}, Casazza and Kalton showed that a separable Banach space with the MAP has the commuting MAP. These two results show that the CBAP is an isomorphic equivalent form of the MAP. Recall that \cite{C1991} a separable Banach space has the MAP if and only if there exists $\rho>0$ and an approximating sequence $\{S_n\}_{n=1}^\infty$ with $\lim_n\|id_X+\rho S_n\|=1+\rho$. There is also a reverse metric approximation property which implies the CBAP and defined as follows \cite{C2001,C1991}.
\begin{de}\label{RMAP}
  Let $X$ be a separable Banach space with an approximating sequence
  $\{S_n\}_{n=1}^\infty$, then $X$ has the reverse metric approximation property (RMAP for short) if $\lim_n\|id_X-S_n\|=1$. Let $\rho>0$, we say $X$ has the $\rho$-RMAP if $\lim_n\|id_X-\rho S_n\|=1$.
\end{de}

Clearly, the $2$-RMAP is equivalent to the UMAP. It was shown in \cite{C1991} that if $X$ has the $\rho$-RMAP for some $\rho>0$ then $X$ has the RMAP. We call the approximating sequence $\{S_n\}_{n=1}^\infty$ in Definition \ref{RMAP} a reverse metric approximating sequence. By Remark \ref{rem3.14}, we get the following corollary.
\begin{cor}\label{cor 3.6}
  Let $X$ be a Banach space with $X^\ast$ separable, $Y$ be a separable Banach space with the $\lambda$-RMAP and $\{S_n\}_{n=1}^\infty$ be the $\lambda$-reverse metric approximating sequence. If $1< \lambda\leq 2$, then $B(X,Y)$ has the $(2,\sigma)$-UBCP for all $0<\sigma<\lambda-1$.
\end{cor}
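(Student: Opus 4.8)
The plan is to bypass the Schauder--frame machinery of Lemma~\ref{decomposable} and argue directly from the $\lambda$-reverse metric approximating sequence $\{S_n\}_{n=1}^\infty$ of $Y$, exploiting precisely the quantity isolated in Remark~\ref{rem3.14}. Fix $0<\sigma<\lambda-1$ and choose small numbers $\varepsilon_1,\varepsilon_2,\eta>0$ with $\varepsilon_1+\lambda\varepsilon_2+2\lambda\eta<(\lambda-1)-\sigma$ and $\varepsilon_1+\lambda\eta<1$. Since $Y$ has the $\lambda$-RMAP, there is $N_0$ such that $\|id_Y-\lambda S_n\|<1+\varepsilon_1$ for all $n\ge N_0$. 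Because $X^\ast$ and $Y$ are separable, fix countable dense subsets $\mathcal{A}\subseteq X^\ast$ and $\mathcal{B}\subseteq Y$, set $\rho:=1+\varepsilon_1+\lambda\eta$ (so $\rho<2$), and consider the countable family of closed balls
$$B\!\left(\lambda\sum_{j=1}^{m}g_j^\ast\otimes s_j,\ \rho\right),\qquad m\in\N_+,\ g_j^\ast\in\mathcal{A},\ s_j\in\mathcal{B},\ \Big\|\lambda\sum_{j=1}^{m}g_j^\ast\otimes s_j\Big\|>\rho+\sigma.$$
I claim this family is a ball-covering of $B(X,Y)$ witnessing the $(2,\sigma)$-UBCP. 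In contrast to Theorem~\ref{thm3.12}, the centers here are scaled by $\lambda$ rather than renormalized to a fixed norm; this is the single genuinely new point.

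To verify the covering, let $T\in B(X,Y)$ with $\|T\|=1$. Since the approximating sequence converges strongly to $id_Y$, for each $x\in X$ we have $S_nTx\to Tx$, hence from $\|Tx\|\le\|Tx-S_nTx\|+\|S_nTx\|$ we obtain $\|Tx\|\le\liminf_n\|S_nTx\|$ and so $\liminf_n\|S_nT\|\ge\|T\|=1$ (the argument used in the proof of Theorem~\ref{thm3.12}). Thus we may fix $k_0\ge N_0$ with $\|S_{k_0}T\|>1-\varepsilon_2$. The operator $S_{k_0}$ is finite rank, so $S_{k_0}T$ lies in the space $F(X,Y)$ of finite-rank operators; since the operators $\sum_{j=1}^{m}g_j^\ast\otimes s_j$ with $g_j^\ast\in\mathcal{A}$, $s_j\in\mathcal{B}$ are norm-dense in $F(X,Y)$, there is such an $R=\sum_{j=1}^{m}g_j^\ast\otimes s_j$ with $\|R-S_{k_0}T\|<\eta$, whence $\|R\|>1-\varepsilon_2-\eta$. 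Writing $T-\lambda R=(id_Y-\lambda S_{k_0})\circ T+\lambda(S_{k_0}T-R)$ gives
$$\|T-\lambda R\|\le\|id_Y-\lambda S_{k_0}\|\,\|T\|+\lambda\|S_{k_0}T-R\|<(1+\varepsilon_1)+\lambda\eta=\rho,$$
so $T\in B(\lambda R,\rho)$, and moreover $\|\lambda R\|-\rho>\lambda(1-\varepsilon_2-\eta)-\rho=(\lambda-1)-\varepsilon_1-\lambda\varepsilon_2-2\lambda\eta>\sigma$, so this ball belongs to the family above.

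Finally, every ball $B(C,\rho)$ in the family satisfies $\|C\|>\rho+\sigma>\rho$, so it lies off the origin, and if $z\in B(C,\rho)$ then $\|z\|\ge\|C\|-\rho>\sigma$, so $B(C,\rho)\cap B(0,\sigma)=\emptyset$; since each radius equals $\rho<2$, this exhibits the $(\rho,\sigma)$-UBCP, and a fortiori the $(2,\sigma)$-UBCP, of $B(X,Y)$. I expect the crucial point to be the choice of the $\lambda$-scaled centers: on the one hand it turns the quantity flagged in Remark~\ref{rem3.14} into $\|(id_Y-\lambda S_{k_0})\circ T\|\le1+\varepsilon_1$, which is exactly where the $\lambda$-RMAP of $Y$ enters; on the other hand, because $\lambda>1$, these centers automatically sit at distance larger than $\sigma$ from the origin whenever $\sigma<\lambda-1$. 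Everything else is the same finite-rank-approximation and separability bookkeeping as in the proof of Theorem~\ref{thm3.12}, together with the harmless feature that $k_0$ is allowed to depend on $T$.
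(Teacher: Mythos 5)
Your proof is correct, and it takes a genuinely different route from the paper's. The paper deduces this corollary from its Schauder--frame machinery: it invokes Lemma~\ref{decomposable} to produce a block unconditional frame $\{(y_n,f_n)\}$, keeps the same covering set as Theorem~\ref{thm3.12} (centers $\frac{2}{\xi_{k_0}}\sum_{n=1}^{N_{k_0}} x^{*}_{m_n}\otimes u_{l_n}$, normalized to norm $2$), and controls the quantity flagged in Remark~\ref{rem3.14} by splitting
\[
id_Y-\frac{2}{\xi_{k_0}}\sum_{n=1}^{N_{k_0}}f_n\otimes y_n=\Bigl(id_Y-\lambda\sum_{n=1}^{N_{k_0}}f_n\otimes y_n\Bigr)+\Bigl(\lambda-\frac{2}{\xi_{k_0}}\Bigr)\sum_{n=1}^{N_{k_0}}f_n\otimes y_n,
\]
arriving at the bound $1+|\lambda\beta-2|+o(1)<2-\sigma$. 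You instead work directly with the reverse metric approximating sequence, replace the frame partial sums by $S_{k_0}$ itself, and --- this is the genuinely new point --- scale the finite-rank centers by $\lambda$ rather than normalizing them to norm $2$. This makes the role of the $\lambda$-RMAP completely transparent, since $\|T-\lambda S_{k_0}T\|\leq\|id_Y-\lambda S_{k_0}\|\to 1$, yields balls of radius $\rho\approx 1$ instead of $\approx 2-\sigma$, and reads off the separation $\lambda-1$ from the origin immediately from $\|\lambda R\|\approx\lambda$; your quantitative bookkeeping ($\rho<2$, $\|\lambda R\|-\rho>\sigma$, countability of the family, and the fact that $\rho,\sigma$ are chosen uniformly before $T$, with only $k_0$ and $R$ depending on $T$) is all sound, and the passage from the $(\rho,\sigma)$-UBCP to the $(2,\sigma)$-UBCP is legitimate since the radii are uniformly at most $2$. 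What your argument buys is brevity and self-containedness (no appeal to Lemma~\ref{decomposable}, only strong convergence of $S_n$ to $id_Y$ and density of the $\mathcal{A}$--$\mathcal{B}$ finite-rank combinations in $F(X,Y)$); what the paper's version buys is a single template with norm-$2$ centers that carries over verbatim to the renormed spaces $Z_\alpha$ in Corollaries~\ref{cor 3.8} and~\ref{cor 3.9}.
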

\begin{proof}
  Since $\{S_n\}_{n=1}^\infty$ is a $\lambda$-reverse metric approximating sequence, then \[\limsup_{n}\|S_n\|\leq \frac{2}{\lambda}<\infty.\] Thus we have $\lambda\leq 2/\limsup_{n}\|S_n\| \leq 2$ and $Y$ has the BAP. By Lemma \ref{decomposable}, for all $\varepsilon_1>0$, there exists a Schauder frame $\{(y_n,f_n)\}_{n\in\mathbb{N}} \subseteq Y\times Y^\ast$ such that $$y=\sum_{n=1}^{\infty}f_n(y) y_n=\sum_{n=1}^{\infty}(f_n\otimes y_n)(y)$$ for all $y\in Y$ and$$\sup_{N\in\mathbb{N}_+}\left\|\sum_{n=1}^{N}f_n \otimes y_n\right\|\leq
  \limsup_{n}\|S_n\|+\varepsilon_1.$$
  For all $T \in B(X,Y)$ with $\|T\|=1$, let $T_{k}=\sum_{n=1}^{N_k}f_nT\otimes y_n$, then
  $$1=\|T\| \leq  \liminf_{k}\|T_k\| \leq \limsup_{k}\|T_k\| \leq \limsup_{k}\|S_k\|\|T\|=\limsup_{k}\|S_k\|.$$
 Let $\xi_{k}=\norm{\sum_{n=1}^{N_{k}} x^{*}_{m_n}\otimes u_{l_n}}$. Let $k_0$ be large enough and $\xi_{k_0}=\norm{\sum_{n=1}^{N_{k_0}} x^{*}_{m_n}\otimes u_{l_n}}$ be chosen the same as that in Theorem \ref{thm3.12}.
 Thus for any $\varepsilon_2>0$, we have
   \[\max\left\{\|T\|-\frac{3\varepsilon_2}{32},\frac{1}{2}\right\}< \xi_{k_0}<\min\left\{\frac{17}{4},\limsup_{k}\|S_k\|\|T\|+\frac{\varepsilon_2}{16}\right\}.\]
  By passing to a subsequence, we can assume $\lim_k \xi_k=\beta$, then
  \[\max\left\{\|T\|-\frac{3\varepsilon_2}{32},\frac{1}{2}\right\}\leq \beta\leq\min\left\{\frac{17}{4},\limsup_{k}\|S_k\|\|T\|+\frac{\varepsilon_2}{16}\right\}.\]
  Since $Y$ has the $\lambda$-RMAP, we have $\lambda\|S_k\|\leq2$ which implies $\lambda\|S_kT\|\leq2$. Thus $\lambda\beta\leq2$.

  By Remark \ref{rem3.14}, it is sufficient to show that for some $\sigma>0$ the following inequality holds \[\norm{\left(id_Y-\frac{2}{\xi_{k_0}}\sum_{n=1}^{N_{k_0}}f_n \otimes y_n\right) \circ T } < 2-\sigma.\]
  Actually we have
  \[
  \begin{aligned}
    &\quad\norm{\left(id_Y-\frac{2}{\xi_{k_0}}\sum_{n=1}^{N_{k_0}}f_n \otimes y_n\right) \circ T }\\
    &\leq\norm{\left(id_Y-\lambda\sum_{n=1}^{N_{k_0}}f_n \otimes y_n\right) \circ T }+\left|\lambda-\frac{2}{\xi_{k_0}}\right|\norm{\sum_{n=1}^{N_{k_0}}f_n T \otimes y_n}\\
    &\leq 1+\left|\lambda-\frac{2}{\lim_{k}\xi_k}\right|\norm{\sum_{n=1}^{N_{k_0}}f_n T \otimes y_n}+\left|\frac{2}{\lim_{k}\xi_k}-\frac{2}{\xi_{k_0}}\right|\norm{\sum_{n=1}^{N_{k_0}}f_n T \otimes y_n}\\
    &\leq 1+\left|\lambda-\frac{2}{\beta}\right|\|T_{k_0}\|+o(1)\\
    &\leq 1+\left|\lambda-\frac{2}{\beta}\right|\xi_{k_0}+o(1)\\
    &\leq 1+\left|\lambda-\frac{2}{\beta}\right|\beta+o(1)\\
    &=1+\left|\lambda\beta-2\right|+o(1)\\
   % &\leq 2-(\varepsilon-1)\\
    &<2,
  \end{aligned}
  \]
  where $o(1)$ can be arbitrarily small when $\varepsilon_1,\varepsilon_2$ small enough. Then the proof presented in Theorem \ref{thm3.12} can be applied with minor modifications.
  %We need to ensure that $\left|\varepsilon\beta-2\right|<1$, this implies $1<\varepsilon\beta<3$. Since $X$ has $\varepsilon$-RMAP, we have $\varepsilon\beta\leq 2$. Thus $1<\varepsilon\leq\varepsilon\beta\leq 2$ which implies $1<\varepsilon\leq 2$.
\end{proof}

By Corollary \ref{cor 3.6}, we obtain the following corollary.

\begin{cor}
  Let $X$ and $Y$ be Banach spaces with $X^\ast$ and $Y$ separable. If $Y$ has the $\lambda$-RMAP for some $\lambda>1$, then $B(X,Y)$ has the UBCP.
\end{cor}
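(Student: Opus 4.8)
The plan is to derive this directly from Corollary \ref{cor 3.6}; the only gap to close is that Corollary \ref{cor 3.6} is stated for $1<\lambda\le 2$, whereas here $\lambda$ is only assumed to be $>1$. So the first step is the observation that the $\lambda$-RMAP automatically entails $\lambda\le 2$.

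To see this, I would let $\{S_n\}_{n=1}^\infty$ be a $\lambda$-reverse metric approximating sequence for $Y$, so that $\lim_n\|id_Y-\lambda S_n\|=1$. (In particular $Y\neq\{0\}$, for otherwise $id_Y=0$ and the limit would be $0$.) From $\lambda\|S_n\|\le\|id_Y\|+\|id_Y-\lambda S_n\|$ we obtain $\limsup_n\|S_n\|\le 2/\lambda$; on the other hand $\{S_n\}$, being an approximating sequence, converges strongly to $id_Y$, so $\|S_n y\|\to\|y\|$ for every $y\in Y$, and picking $y\in S_Y$ gives $\limsup_n\|S_n\|\ge 1$. Hence $1\le 2/\lambda$, i.e.\ $1<\lambda\le 2$.

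Next I would simply invoke Corollary \ref{cor 3.6} with a fixed $\sigma\in(0,\lambda-1)$, for instance $\sigma=(\lambda-1)/2$: it yields that $B(X,Y)$ has the $(2,(\lambda-1)/2)$-UBCP. Since, by definition, a normed space has the UBCP precisely when it has the $(r,\delta)$-UBCP for some $r>0$ and $\delta>0$, taking $r=2$ and $\delta=(\lambda-1)/2>0$ finishes the argument.

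There is no substantial obstacle in this proof: it is essentially a repackaging of Corollary \ref{cor 3.6}, and the only thing requiring verification is the a priori estimate $\lambda\le 2$ recorded above. (One could alternatively bypass this remark by re-running the final chain of inequalities in the proof of Corollary \ref{cor 3.6} using directly the bound $\limsup_n\|S_n\|\le 2/\lambda$ from \cite{C1991}, but applying Corollary \ref{cor 3.6} as a black box is cleaner.)
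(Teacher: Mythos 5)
Your proposal is correct and follows exactly the paper's route: the paper derives this corollary directly from Corollary \ref{cor 3.6}, and the observation you supply --- that the $\lambda$-RMAP forces $\lambda \le 2/\limsup_n\|S_n\| \le 2$, so the hypothesis $1<\lambda\le 2$ is automatic --- is precisely the estimate already recorded at the start of the proof of Corollary \ref{cor 3.6}. Your write-up just makes this reduction explicit, which is a reasonable filling-in of a step the paper leaves implicit.
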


Also, it is true for the renormed spaces.

\begin{cor}\label{cor 3.8}
   Let $X$ be a Banach space with $X^\ast$ separable, $Y$ be a separable Banach space with the $\lambda$-RMAP and $\{S_n\}_{n=1}^\infty$ be the $\lambda$-reverse metric approximating sequence. If $\lambda$ and $(1+\sqrt{17})/8 <\alpha \leq 1$ satisfy
   \[ \frac{1}{\alpha} <\lambda<\frac{4\alpha-1}{\limsup_n\|S_n\|},\] then the renormed space $Z_\alpha=(B(X,Y),\|\cdot\|_\alpha)$ has the $(2\alpha,\sigma)$-UBCP for all \[0<\sigma<\min\{\alpha\lambda-1, 4\alpha-1-\lambda\limsup_n\|S_n\|\}.\]
\end{cor}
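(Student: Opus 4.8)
The plan is to follow the same blueprint as Corollary \ref{cor 3.6}, but now track how the renorming parameter $\alpha$ interacts with the two error terms that already appeared there, namely $|\lambda\beta-2|$ (coming from the RMAP estimate of $\|(id_Y-\lambda\sum f_n\otimes y_n)\circ T\|$) and the bound on $\beta=\lim_k\xi_k$. As in Corollary \ref{cor 3.6}, since $\{S_n\}$ is a $\lambda$-reverse metric approximating sequence we have $\limsup_n\|S_n\|<\infty$, so $Y$ has the BAP; invoke Lemma \ref{decomposable} to get a Schauder frame $\{(y_n,f_n)\}$ with $\sup_N\|\sum_{n=1}^N f_n\otimes y_n\|\le\limsup_n\|S_n\|+\varepsilon_1$. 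For $T\in B(X,Y)$ with $\|T\|_\alpha=1$ we have $\|T\|\ge1$ and $\|T\|_{Q(X,Y)}\le1$, and by the argument of Theorem \ref{thm3.12} there is a large $k_0$ with the usual two-sided control on $\xi_{k_0}=\|\sum_{n=1}^{N_{k_0}}x^*_{m_n}\otimes u_{l_n}\|$; passing to a subsequence, set $\beta=\lim_k\xi_k$, so that $\|T\|-o(1)\le\beta\le\limsup_n\|S_n\|\cdot\|T\|+o(1)$ and $\lambda\beta\le2$.

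Next I would run the $\|\cdot\|_\alpha$-estimate exactly as in Theorem \ref{thm3.13}: split $T-\tfrac{2}{\xi_{k_0}}\sum x^*_{m_n}\otimes u_{l_n}$ into the small term $\tfrac{2}{\xi_{k_0}}(\sum f_nT\otimes y_n-\sum x^*_{m_n}\otimes u_{l_n})$ plus $(1-\tfrac{2}{\xi_{k_0}})\sum_{n=1}^{N_{k_0}}f_nT\otimes y_n+\sum_{n>N_{k_0}}f_nT\otimes y_n=(id_Y-\tfrac{2}{\xi_{k_0}}\sum_{n=1}^{N_{k_0}}f_n\otimes y_n)\circ T$, and take the $\alpha$-weighted combination of the $B(X,Y)$-norm and the $Q(X,Y)$-norm. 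The $Q(X,Y)$-part contributes at most $(1-\alpha)\|T\|_{Q(X,Y)}\le(1-\alpha)$; the first small term contributes $o(1)$; and by Remark \ref{rem3.14} together with the triangle-inequality chain from Corollary \ref{cor 3.6} the middle term is bounded in operator norm by
\[
\Bigl\|\bigl(id_Y-\lambda\textstyle\sum_{n=1}^{N_{k_0}}f_n\otimes y_n\bigr)\circ T\Bigr\|+\Bigl|\lambda-\tfrac{2}{\xi_{k_0}}\Bigr|\,\|T_{k_0}\|\le \|T\|+|\lambda\beta-2|+o(1).
\]
Hence the whole $\alpha$-norm of the difference is at most $\alpha\bigl(\|T\|+|\lambda\beta-2|\bigr)+(1-\alpha)+o(1)$. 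Since $\|T\|_\alpha=\alpha\|T\|+(1-\alpha)\|T\|_{Q(X,Y)}=1$ one has $\alpha\|T\|\le 1$, so this is at most $1+(1-\alpha)+\alpha|\lambda\beta-2|+o(1)$; I would then compare this with $2\alpha=\|\tfrac{2}{\xi_{k_0}}\sum x^*_{m_n}\otimes u_{l_n}\|_\alpha$ and show the gap $2\alpha-\bigl(1+(1-\alpha)+\alpha|\lambda\beta-2|\bigr)=(4\alpha-2)-\alpha|\lambda\beta-2|$ is bounded below by a fixed $\sigma>0$. Splitting into the two sign cases for $\lambda\beta-2$: when $\lambda\beta\le2$ the gap is $(4\alpha-2)-\alpha(2-\lambda\beta)=\alpha\lambda\beta-2\ge\alpha\lambda\|T\|-2\ge\alpha\lambda-2$... more carefully, using $\beta\ge\|T\|\ge1$ gives $\ge\alpha\lambda-2$; here one uses the hypothesis $\lambda>1/\alpha$ so $\alpha\lambda>1$, and combined with $4\alpha-2>0$ (guaranteed by $\alpha>(1+\sqrt{17})/8>1/2$) a positive lower bound of the form $\min\{\alpha\lambda-1,\dots\}$ emerges; when $\lambda\beta>2$ one instead uses $\beta\le\limsup_n\|S_n\|\|T\|$ together with $\alpha\|T\|\le1$, so $\alpha\lambda\beta\le\lambda\limsup_n\|S_n\|$, and the hypothesis $\lambda<(4\alpha-1)/\limsup_n\|S_n\|$ yields the bound $4\alpha-1-\lambda\limsup_n\|S_n\|>0$. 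Taking $\sigma$ below the minimum of the two quantities and $\varepsilon_1,\varepsilon_2$ small enough to absorb the $o(1)$ finishes the estimate, so the displayed countable set is a set of BCP points and $Z_\alpha$ has the $(2\alpha,\sigma)$-UBCP.

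The main obstacle I anticipate is bookkeeping the case split cleanly so that the final lower bound on the gap genuinely matches $\min\{\alpha\lambda-1,\,4\alpha-1-\lambda\limsup_n\|S_n\|\}$ — in particular making sure the $o(1)$ terms (from $\varepsilon_1$ in the frame bound, from $\varepsilon_2$ in the approximation of $f_nT$ and $y_n$, and from $\xi_{k_0}\to\beta$) are genuinely negligible uniformly in $T$, and verifying that the condition $(1+\sqrt{17})/8<\alpha$ is exactly what is needed to keep $\xi_{k_0}$ (equivalently $\beta$) inside the interval where the Theorem \ref{thm3.12}-style estimates $\xi_{k_0}<17/4$ and $\xi_{k_0}>1/2$ are available while still forcing both gap quantities positive. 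Everything else is a routine repetition of the computations in Theorems \ref{thm3.12}, \ref{thm3.13} and Corollary \ref{cor 3.6}.
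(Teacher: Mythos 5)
Your overall strategy is the paper's: reduce via Remark \ref{rem3.14} to bounding $\norm{\left(id_Y-\tfrac{2}{\xi_{k_0}}\sum_{n\le N_{k_0}}f_n\otimes y_n\right)\circ T}_\alpha$, insert $\lambda\sum_{n\le N_{k_0}}f_n\otimes y_n$ to exploit the RMAP, and split on the sign of $\lambda\beta-2$, the two branches producing the two terms of the minimum (and the compatibility of $1/\alpha<\lambda<(4\alpha-1)/\limsup_n\|S_n\|$ with $\limsup_n\|S_n\|\ge1$ is indeed what forces $\alpha>(1+\sqrt{17})/8$). But the central estimate as you wrote it does not close. You bound the quotient-norm contribution by $(1-\alpha)\|T\|_{Q(X,Y)}\le 1-\alpha$ and, separately, $\alpha\|T\|\le 1$, arriving at $2-\alpha+\alpha|\lambda\beta-2|+o(1)$. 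The gap to $2\alpha$ is then $3\alpha-2-\alpha|\lambda\beta-2|$, not $(4\alpha-2)-\alpha|\lambda\beta-2|$ as you wrote; in the branch $\lambda\beta\le2$ it equals $\alpha\lambda\beta-(2-\alpha)\ge\alpha\lambda-(2-\alpha)$, and the hypothesis $\alpha\lambda>1$ does not make this positive for $\alpha<1$ (your own intermediate bound ``$\ge\alpha\lambda-2$'' is negative, and the step ``$(4\alpha-2)-\alpha(2-\lambda\beta)=\alpha\lambda\beta-2$'' is also false). So the claimed minimum does not emerge from your chain of inequalities.

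The missing idea is that the two terms must be combined exactly rather than estimated separately: since $\sum_{n\le N_{k_0}}f_n\otimes y_n$ is finite rank, the quotient norm of the whole expression is exactly $\|T\|_{Q(X,Y)}$, and then
\[
\alpha\|T\|+(1-\alpha)\|T\|_{Q(X,Y)}=\|T\|_\alpha=1
\]
on the nose. This is what the paper does, yielding the bound $1+\left|\alpha\lambda\beta-2\alpha\right|+o(1)$ and the gap $2\alpha-1-\left|\alpha\lambda\beta-2\alpha\right|$, which equals $\alpha\lambda\beta-1\ge\alpha\lambda-1$ when $\lambda\beta\le2$ (using $\beta\ge\|T\|\ge1$) and $4\alpha-1-\alpha\lambda\beta\ge 4\alpha-1-\lambda\limsup_n\|S_n\|$ when $\lambda\beta>2$ (using $\alpha\|T\|\le1$ and $\beta\le\limsup_n\|S_n\|\|T\|+o(1)$), exactly matching the stated minimum. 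One further point: you assert $\lambda\beta\le2$ when setting up $\beta$, carried over from Corollary \ref{cor 3.6}; that inequality relied on $\|T\|=1$ and is unavailable here, where $\|T\|$ may be as large as $1/\alpha$. Your case split is therefore genuinely needed and the a priori claim $\lambda\beta\le2$ should be deleted.
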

\begin{proof}
  If $T \in Z_\alpha$ with $\|T\|_\alpha=1$, then $1 \leq \|T\| \leq \alpha^{-1}$.
  By Remark \ref{rem3.14} and Corollary \ref{cor 3.6}, it is sufficient to show that for some $\sigma>0$ the following inequality holds \[\norm{\left(id_Y-\frac{2}{\xi}\sum_{n=1}^{N_{k_0}}f_n \otimes y_n\right) \circ T }_\alpha < 2\alpha-\sigma.\]
  Actually we have
  \[
  \begin{aligned}
    &\quad\norm{\left(id_Y-\frac{2}{\xi}\sum_{n=1}^{N_{k_0}}f_n \otimes y_n\right) \circ T }_\alpha\\
    &= \alpha\norm{\left(id_Y-\frac{2}{\xi}\sum_{n=1}^{N_{k_0}}f_n \otimes y_n\right) \circ T }+(1-\alpha)\norm{\left(id_Y-\frac{2}{\xi}\sum_{n=1}^{N_{k_0}}f_n \otimes y_n\right) \circ T }_{Q(X,Y)}\\
    &=\alpha\norm{\left(id_Y-\frac{2}{\xi}\sum_{n=1}^{N_{k_0}}f_n \otimes y_n\right) \circ T }+(1-\alpha)\|T \|_{Q(X,Y)}\\
    &\leq \alpha\|T\|+\alpha\left|\lambda-\frac{2}{\beta}\right|\beta+(1-\alpha)\|T \|_{Q(X,Y)}+o(1)\\
    &=1+\alpha\left|\lambda-\frac{2}{\beta}\right|\beta+o(1)\\
    &=1+\left|\alpha\lambda\beta-2\alpha\right|+o(1)\\
    &<2\alpha,
  \end{aligned}
  \]
where $o(1)$ can be arbitrarily small. Then the proof presented in Theorem \ref{thm3.13} can be applied with minor modifications.
%Since \[ \frac{1}{\alpha\|T\|} \leq \frac{1}{\alpha} <\varepsilon<\frac{4\alpha-1}{\limsup_n\|S_n\|} \leq \frac{4\alpha-1}{\alpha\|T\|\limsup_n\|S_n\|},\]
%we have \[\alpha\left|\varepsilon\limsup_n\|S_n\|\|T\|-2\right| \leq 2\alpha-1 -\min\{\alpha\varepsilon\limsup_n\|S_n\|-1, 4\alpha-1-\varepsilon\limsup_n\|S_n\|\}.\]

\end{proof}

Note that the inequality (\ref{ieq1}) and inequality (\ref{ieq3}) hold if the upper bound of the reflection of approximating operator $id_Y-2S_n$ can be controlled.

\begin{cor}\label{cor 3.9}
  Let $X$ and $Y$ be Banach spaces with $X^\ast$ and $Y$ separable. If $Y$ has an approximating sequence $\{S_n\}_{n=1}^\infty$ such that $\lim_n\|id_Y-2S_n\|=\lambda$ for some $\lambda>0$, then for all \[\frac{\lambda}{4}+\frac{\limsup_n\|S_n\|}{2} < \alpha \leq 1,\] the renormed space $Z_\alpha=\left(B(X,Y), \|\cdot\|_\alpha \right)$ has the $(2\alpha,\sigma)$-UBCP for all \[0<\sigma<4\alpha-2\limsup_{n}\|S_n\|-\lambda.\]
\end{cor}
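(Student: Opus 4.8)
\textbf{Proof plan for Corollary \ref{cor 3.9}.}
The plan is to mimic the structure of Corollary \ref{cor 3.8}, but now the role previously played by the $\lambda$-RMAP bound is taken over by the bound on the reflected approximating operators $\|id_Y-2S_n\|$. First I would observe that the hypothesis $\lim_n\|id_Y-2S_n\|=\lambda$ together with the triangle inequality $\|S_n\|\leq\frac12(\|id_Y\|+\|id_Y-2S_n\|)$ shows $\limsup_n\|S_n\|<\infty$, so $Y$ has the BAP and Lemma \ref{decomposable} applies: for every $\varepsilon_1>0$ there is a Schauder frame $\{(y_n,f_n)\}_{n\in\N_+}\subseteq Y\times Y^\ast$ with $y=\sum_n f_n(y)y_n$ for all $y\in Y$ and $\sup_N\|\sum_{n=1}^N f_n\otimes y_n\|\leq\limsup_n\|S_n\|+\varepsilon_1$. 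As in Theorem \ref{thm3.12}, for $T\in Z_\alpha$ with $\|T\|_\alpha=1$ we have $1\leq\|T\|\leq\alpha^{-1}$ and $\|T\|_{Q(X,Y)}\leq1$; setting $T_k=\sum_{n=1}^{N_k}f_nT\otimes y_n$ one finds a large $k_0$ for which $\|T_{k_0}\|$ is close to $\|T\|$ from below, and one picks $x^*_{m_n}\in\mathcal A$, $u_{l_n}\in\mathcal B$ approximating $f_nT$ and $y_n$ well enough that $\xi_{k_0}:=\|\sum_{n=1}^{N_{k_0}}x^*_{m_n}\otimes u_{l_n}\|$ is within a prescribed small error of $\|T_{k_0}\|$, and (by passing to a subsequence) $\xi_k\to\beta$ with $\beta$ lying between roughly $\|T\|$ and $\limsup_n\|S_n\|\cdot\|T\|$.

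The heart of the estimate, following Remark \ref{rem3.14}, is to bound
\[
\norm{\left(id_Y-\frac{2}{\xi_{k_0}}\sum_{n=1}^{N_{k_0}}f_n\otimes y_n\right)\circ T}_\alpha
=\alpha\norm{\left(id_Y-\frac{2}{\xi_{k_0}}\sum_{n=1}^{N_{k_0}}f_n\otimes y_n\right)\circ T}
+(1-\alpha)\|T\|_{Q(X,Y)}.
\]
For the operator-norm term I would split off a copy of $2S_{k_0}$ (or rather the partial-sum operator $\sum_{n=1}^{N_{k_0}}f_n\otimes y_n$, which plays the role of $S_{k_0}$): write $id_Y-\frac{2}{\xi_{k_0}}\sum f_n\otimes y_n=(id_Y-2\sum f_n\otimes y_n)+(2-\frac{2}{\xi_{k_0}})\sum f_n\otimes y_n$, so that after composing with $T$ and taking norms,
\[
\norm{\left(id_Y-\tfrac{2}{\xi_{k_0}}\textstyle\sum_{n=1}^{N_{k_0}}f_n\otimes y_n\right)\circ T}
\leq\lambda\|T\|+o(1)+\Bigl|2-\tfrac{2}{\beta}\Bigr|\,\beta+o(1)
=\lambda\|T\|+|2\beta-2|+o(1),
\]
where $o(1)$ absorbs the errors coming from $\varepsilon_1,\varepsilon_2$ and from $\xi_{k_0}\to\beta$, and where I used $\|(id_Y-2\sum f_n\otimes y_n)\circ T\|\leq\|id_Y-2S_{k_0}\|\|T\|+o(1)\to\lambda\|T\|$ together with $\|T_{k_0}\|\leq\xi_{k_0}+o(1)\to\beta$. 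Multiplying by $\alpha$ and adding $(1-\alpha)\|T\|_{Q(X,Y)}\leq1-\alpha$, and using $\alpha\|T\|+(1-\alpha)\|T\|_{Q(X,Y)}\leq\|T\|_\alpha=1$, gives
\[
\norm{\left(id_Y-\tfrac{2}{\xi_{k_0}}\textstyle\sum_{n=1}^{N_{k_0}}f_n\otimes y_n\right)\circ T}_\alpha
\leq 1+\alpha(\lambda-1)\|T\|+2\alpha|\beta-1|+o(1)
\leq 1+(\lambda-1)+2\alpha(\limsup_n\|S_n\|\cdot\|T\|-1)+o(1),
\]
after estimating $\|T\|\leq\alpha^{-1}$ in the $\alpha(\lambda-1)\|T\|$ term and bounding $|\beta-1|\leq\limsup_n\|S_n\|\|T\|-1$ (for $\alpha$ in the stated range this keeps $\beta>1$, so the absolute value opens correctly); a short arithmetic check using $\|T\|\leq\alpha^{-1}$ reduces the right-hand side to $2\limsup_n\|S_n\|+\lambda-2\alpha+o(1)$, which is $<2\alpha-\sigma$ precisely when $\sigma<4\alpha-2\limsup_n\|S_n\|-\lambda$ and the errors are taken small enough; the hypothesis $\frac{\lambda}{4}+\frac{\limsup_n\|S_n\|}{2}<\alpha$ is exactly what makes this upper bound strictly less than $2\alpha$.

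Having obtained $\norm{T-\frac{2\sum_{n=1}^{N_{k_0}}x^*_{m_n}\otimes u_{l_n}}{\xi_{k_0}}}_\alpha<2\alpha-\sigma<2\alpha=\norm{\frac{2\sum_{n=1}^{N_{k_0}}x^*_{m_n}\otimes u_{l_n}}{\xi_{k_0}}}_\alpha$, the rest is verbatim from Theorem \ref{thm3.13}: the countable set $\{2\sum_{i=1}^m g_i^*\otimes s_i/\|\sum_{i=1}^m g_i^*\otimes s_i\|:m\in\N_+,\ g_i^*\in\mathcal A,\ s_i\in\mathcal B\}$ is a ball-covering of the unit sphere of $Z_\alpha$ by balls of radius $<2\alpha-\sigma$ all centered at norm $2\alpha$, hence all missing the ball of radius $\sigma$ about the origin, giving the $(2\alpha,\sigma)$-UBCP. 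I expect the only real obstacle to be the bookkeeping in the chain of inequalities above — specifically, making sure that the various $o(1)$ errors (controlled by $\varepsilon_1$, by $\varepsilon_2$, and by the speed of $\xi_k\to\beta$ and $\|id_Y-2S_k\|\to\lambda$) can all be absorbed without spoiling the strict inequalities, and checking that the constraint $\alpha>\frac{\lambda}{4}+\frac{\limsup_n\|S_n\|}{2}$ indeed both guarantees $\beta>1$ (so the absolute values open the intended way) and makes the final bound strictly below $2\alpha$; this is routine but needs care, exactly as in the proofs of Corollaries \ref{cor 3.6} and \ref{cor 3.8}.
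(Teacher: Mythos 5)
Your proposal follows essentially the same route as the paper's proof: the same splitting $id_Y-\frac{2}{\xi_{k_0}}\sum_{n=1}^{N_{k_0}}f_n\otimes y_n=(id_Y-2\sum_{n=1}^{N_{k_0}}f_n\otimes y_n)+(2-\frac{2}{\xi_{k_0}})\sum_{n=1}^{N_{k_0}}f_n\otimes y_n$ keyed to Remark \ref{rem3.14}, the same passage to $\beta=\lim_k\xi_k$ with the errors absorbed into $o(1)$, and the same final arithmetic $\lambda+|2\alpha\beta-2\alpha|+o(1)<2\alpha$ under the stated constraint on $\alpha$. Your write-up is in fact slightly more explicit than the paper's about the closing arithmetic, and the details are correct.
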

\begin{proof}
    By Remark \ref{rem3.14} and Corollary \ref{cor 3.8}, we have \begin{align*}
    &\quad\norm{\left(id_Y-\frac{2}{\xi_{k_0}}\sum_{n=1}^{N_{k_0}}f_n \otimes y_n\right) \circ T }_\alpha\\
    &\leq \alpha\norm{\left(id_Y-2\sum_{n=1}^{N_{k_0}}f_n \otimes y_n\right) \circ T }+\alpha\left|2-\frac{2}{\xi_{k_0}}\right|\norm{\sum_{n=1}^{N_{k_0}}f_n T \otimes y_n}+(1-\alpha)\|T\|_{Q(X,Y)}\\
    &\leq \alpha\norm{\left(id_Y-2\sum_{n=1}^{N_{k_0}}f_n \otimes y_n\right) \circ T }+\alpha\left|2-\frac{2}{\lim_{k}\xi_k}\right|\norm{\sum_{n=1}^{N_{k_0}}f_n T \otimes y_n}\\
    & \quad +\alpha\left|\frac{2}{\lim_{k}\xi_k}-\frac{2}{\xi_{k_0}}\right|\norm{\sum_{n=1}^{N_{k_0}}f_n T \otimes y_n}+(1-\alpha)\|T\|_{Q(X,Y)}\\
    &\leq\alpha\lambda\|T\|+(1-\alpha)\|T\|_{Q(X,Y)}+\alpha\left|2-\frac{2}{\beta}\right|\|T_{k_0}\|+o(1)\\
    &= 1+\alpha(\lambda-1)\|T\|+\alpha\left|2-\frac{2}{\beta}\right|\|T_{k_0}\|+o(1)\\
    &\leq 1+\alpha(\lambda-1)\|T\|+\alpha\left|2-\frac{2}{\beta}\right|\xi_{k_0}+o(1)\\
    &\leq \lambda+\alpha\left|2-\frac{2}{\beta}\right|\beta+o(1)\\
    &=\lambda+\left|2\alpha\beta-2\alpha\right|+o(1)\\
    &<2\alpha,
    \end{align*}
where $o(1)$ can be arbitrarily small. Then the proof presented in Theorem \ref{thm3.13} can be applied with minor modifications.
%So $Z_\alpha$ has the $(2\alpha,\sigma)$-UBCP for all $0<\sigma<4\alpha-2\limsup_{n}\|S_n\|-\varepsilon$.
\end{proof}

\begin{cor}
     Let $X$ and $Y$ be Banach spaces with $X^\ast$ and $Y$ separable. If $Y$ has an approximating sequence $\{S_n\}_{n=1}^\infty$ such that $\lim_n\|id_Y-2S_n\| < 3/2$, then  $B(X,Y)$ has the UBCP.
\end{cor}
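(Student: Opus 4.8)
The plan is to deduce this corollary directly from Corollary~\ref{cor 3.9} by choosing the parameter $\alpha=1$ and checking that the hypothesis $\lim_n\|id_Y-2S_n\|<3/2$ makes the range of admissible $\alpha$ in Corollary~\ref{cor 3.9} nonempty and include $\alpha=1$. Concretely, set $\lambda:=\lim_n\|id_Y-2S_n\|<3/2$. First I would record the elementary estimate relating $\limsup_n\|S_n\|$ to $\lambda$: from $2S_n=id_Y-(id_Y-2S_n)$ we get $\|2S_n\|\le 1+\|id_Y-2S_n\|$, hence $\limsup_n\|S_n\|\le (1+\lambda)/2$. This is the only quantitative input we need.

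Next I would verify that $\alpha=1$ lies in the interval $\big(\tfrac{\lambda}{4}+\tfrac{\limsup_n\|S_n\|}{2},\,1\big]$ required by Corollary~\ref{cor 3.9}. Using $\limsup_n\|S_n\|\le(1+\lambda)/2$, the left endpoint is bounded above by $\tfrac{\lambda}{4}+\tfrac{1+\lambda}{4}=\tfrac{1+2\lambda}{4}$, and since $\lambda<3/2$ this is strictly less than $\tfrac{1+3}{4}=1$. Therefore $\alpha=1$ is admissible. Applying Corollary~\ref{cor 3.9} with this $\alpha$, the renormed space $Z_1=(B(X,Y),\|\cdot\|_1)=B(X,Y)$ (since $\|\cdot\|_1=\|\cdot\|_{B(X,Y)}$ by \eqref{renorm}) has the $(2,\sigma)$-UBCP for all $0<\sigma<4-2\limsup_n\|S_n\|-\lambda$. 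It remains to check this last interval is nonempty: $4-2\limsup_n\|S_n\|-\lambda\ge 4-(1+\lambda)-\lambda=3-2\lambda>0$ because $\lambda<3/2$. Hence such a $\sigma$ exists, and $B(X,Y)$ has the $(2,\sigma)$-UBCP, in particular the UBCP.

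I would also note the symmetric case: the hypothesis is stated as ``$X$ or $Y$'' has such an approximating sequence, whereas Corollary~\ref{cor 3.9} is phrased for $Y$. If instead $X$ has the approximating sequence $\{S_n\}$, one can either invoke the analogue of Remark~\ref{rem3.14} on the $X$-side (composing with $T$ on the right, i.e. controlling $\|T\circ(id_X-\tfrac{2}{\xi}\sum f_n\otimes x_n)\|$ via a block unconditional Schauder frame for $X$ obtained from Lemma~\ref{decomposable}, which applies since $X$ has the BAP when it has an approximating sequence), or observe that the whole argument of Theorem~\ref{thm3.12} and Corollary~\ref{cor 3.9} is symmetric in the two factors up to replacing left composition by right composition and $Y$-separability assumptions by the corresponding ones (here $X^\ast$ separable already ensures $X$ is separable, so the frame construction on $X$ goes through). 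The routine bookkeeping is identical.

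The main obstacle is not any single hard estimate but making sure the constants line up: one must be careful that the $o(1)$ terms coming from the approximations $\varepsilon_1,\varepsilon_2$ in Corollary~\ref{cor 3.9} can indeed be absorbed, which is guaranteed precisely because $3-2\lambda>0$ leaves a strictly positive margin. So the proof is essentially: take $\lambda=\lim_n\|id_Y-2S_n\|<3/2$, use $\limsup_n\|S_n\|\le(1+\lambda)/2$ to place $\alpha=1$ in the admissible range of Corollary~\ref{cor 3.9}, and conclude that $B(X,Y)=Z_1$ has the $(2,\sigma)$-UBCP for any $0<\sigma<3-2\lambda\le 4-2\limsup_n\|S_n\|-\lambda$, hence the UBCP; the ``$X$ or'' case follows by the symmetric argument.
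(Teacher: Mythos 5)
Your proposal is correct and follows essentially the same route as the paper: both deduce the result from Corollary~\ref{cor 3.9} with $\alpha=1$, using the elementary bound $\limsup_n\|S_n\|\le(1+\lambda)/2\le 5/4$ to see that $4-2\limsup_n\|S_n\|-\lambda>3-2\lambda>0$, so an admissible $\sigma$ exists. Your extra verification that $\alpha=1$ lies in the admissible range, and the remark on the symmetric case, are harmless additions to what the paper records.
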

\begin{proof}
    Since $\lim_n\|id_Y-2 S_n\|<3/2$, we have $\limsup_n\|S_n\| \leq 5/4$. Thus $4-2\limsup_n\|S_n\|-\lim_n\|id_Y-2 S_n\| > 4-5/2-3/2=0$ and  $B(X,Y)$ has the UBCP by Corollary \ref{cor 3.9}.
\end{proof}

Let $X$ be a separable Banach space with the BAP, then $X$ has a Schauder frame $\{(x_n,f_n)\}_{n=1}^\infty \subseteq X\times X^*$. Thus for all $x\in X$ we have $x=\sum_{n=1}^{\infty} f_n(x) x_n$.
Then for all $x \in X$ and $T \in B(X,Y)$, we have
\[Tx=T\left(\sum_{n=1}^{\infty}f_n(x)x_n\right)=\sum_{n=1}^{\infty}\left(f_n\otimes T x_n\right)(x).\] Therefore, we obtain the symmetric version of Theorem \ref{thm3.13} and its following corollaries.

\begin{thm}\label{thm3.19}
  Let $X$ and $Y$ be Banach spaces with $X^\ast$ and $Y$ separable. If $X$ or $Y$ has the $(2-\varepsilon)$-UBAP for any $\varepsilon>0$, then for all $1-\varepsilon/2 < \alpha \leq 1$, the renormed space $Z_\alpha=\left(B(X,Y), \|\cdot\|_\alpha \right)$ has the $(2\alpha,2\alpha+\varepsilon-2-\sigma)$-UBCP for all $0<\sigma<2\alpha+\varepsilon-2$.
\end{thm}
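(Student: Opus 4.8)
The plan is to split the statement into the two cases ``$Y$ has the $(2-\varepsilon)$-UBAP'' and ``$X$ has the $(2-\varepsilon)$-UBAP''. The first is precisely Theorem~\ref{thm3.13}, so the work is to handle the second by running the proof of Theorem~\ref{thm3.13} with a frame of $X$ in place of a frame of $Y$. Fix $\varepsilon>0$; since $X$ has the $(2-\varepsilon)$-UBAP, Lemma~\ref{decomposable} produces, for every small $\varepsilon_1>0$, a block unconditional Schauder frame $\{(x_n,f_n)\}_{n\in\N_+}\subseteq X\times X^\ast$ with $\|x_n\|=1$, $\|f_n\|\le1$, a partition $\{I_k\}$ of $\N_+$ with block endpoints $\{N_k\}$, and both the frame bound $\sup_N\|\sum_{n\le N}f_n\otimes x_n\|$ and the block unconditional frame bound at most $2-\varepsilon+\varepsilon_1$. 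For $T\in B(X,Y)$ one has $Tx=\sum_{n=1}^\infty(f_n\otimes Tx_n)(x)$, and the device that makes the whole argument go through is the left factorization
\[
\sum_{k=1}^{N}\theta_k\sum_{n\in I_k}f_n\otimes Tx_n \;=\; T\circ\Big(\sum_{k=1}^{N}\theta_k\sum_{n\in I_k}f_n\otimes x_n\Big)
\]
(and similarly for ordinary partial sums). Writing $B_k:=\sum_{n\in I_k}f_n\otimes x_n$ for the frame blocks in $X$, the estimate in the proof of Lemma~\ref{lem3.4} gives $\|\sum_kc_kB_k\|\le\|\{c_k\}\|_\infty(2-\varepsilon+\varepsilon_1)$ for every $\{c_k\}\in\ell^\infty$, hence $\|T\circ\sum_kc_kB_k\|\le\|T\|\,\|\{c_k\}\|_\infty(2-\varepsilon+\varepsilon_1)$; this plays the role that Lemma~\ref{lem3.4} played in Theorem~\ref{thm3.13}.

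With this in hand, the rest transcribes Theorems~\ref{thm3.12} and \ref{thm3.13}. Fix $T$ with $\|T\|_\alpha=1$, so $\|T\|\ge1$ and $\|T\|_{Q(X,Y)}\le1$. The truncations $T_k:=\sum_{n=1}^{N_k}f_n\otimes Tx_n=T\circ\big(\sum_{n=1}^{N_k}f_n\otimes x_n\big)$ converge to $T$ in the strong operator topology and satisfy $\|T_k\|\le\|T\|(2-\varepsilon+\varepsilon_1)$, so $\|T\|\le\liminf_k\|T_k\|$, and as in Theorem~\ref{thm3.12} there is $k_0$ with $\max\{\|T\|-\varepsilon_2/16,\,3/4\}<\|T_{k_0}\|\le\|T\|(2-\varepsilon+\varepsilon_1)$. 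Using countable dense sets $\mathcal A\subseteq X^\ast$ and $\mathcal B\subseteq Y$ (available by separability), pick $x^\ast_{m_n}\in\mathcal A$, $u_{l_n}\in\mathcal B$ ($1\le n\le N_{k_0}$) so that $\|\sum_{n=1}^{N_{k_0}}f_n\otimes Tx_n-\sum_{n=1}^{N_{k_0}}x^\ast_{m_n}\otimes u_{l_n}\|<\min\{1/4,\varepsilon_2/32\}$, and set $\xi_{k_0}=\|\sum_{n=1}^{N_{k_0}}x^\ast_{m_n}\otimes u_{l_n}\|$. Then, with $c_k=1-2/\xi_{k_0}$ for $k\le k_0$ and $c_k=1$ otherwise,
\[
T-\frac{2\sum_{n=1}^{N_{k_0}}x^\ast_{m_n}\otimes u_{l_n}}{\xi_{k_0}}=\frac{2}{\xi_{k_0}}\Big(\sum_{n=1}^{N_{k_0}}f_n\otimes Tx_n-\sum_{n=1}^{N_{k_0}}x^\ast_{m_n}\otimes u_{l_n}\Big)+T\circ\Big(\sum_kc_kB_k\Big),
\]
and, applying $\|\cdot\|_\alpha=\alpha\|\cdot\|+(1-\alpha)\|\cdot\|_{Q(X,Y)}$ and the vanishing of $\|\cdot\|_{Q(X,Y)}$ on finite-rank operators, one reproduces verbatim the chain (\ref{ieq4})--(\ref{ieq6}) of Theorem~\ref{thm3.13}, getting $\big\|T-2\xi_{k_0}^{-1}\sum_{n=1}^{N_{k_0}}x^\ast_{m_n}\otimes u_{l_n}\big\|_\alpha<2-\varepsilon+\sigma<2\alpha$, whereas the center has $\|\cdot\|_\alpha$-norm $2\alpha$. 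Hence the countable set $\big\{2\sum_{i=1}^mg_i^\ast\otimes s_i/\|\sum_{i=1}^mg_i^\ast\otimes s_i\|:g_i^\ast\in\mathcal A,\,s_i\in\mathcal B\big\}$ is a family of BCP points, and the bookkeeping of the UBCP constants $(2\alpha,2\alpha+\varepsilon-2-\sigma)$ (and the treatment of $\alpha=1$) is identical to Theorems~\ref{thm3.12} and \ref{thm3.13}.

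The only point that is not routine --- the sole difference from Theorem~\ref{thm3.13} --- is checking that the ``reflection'' operator $\big(1-2/\xi_{k_0}\big)\sum_{n=1}^{N_{k_0}}f_n\otimes Tx_n+\sum_{n>N_{k_0}}f_n\otimes Tx_n$ still obeys the bound $\|T\|(2-\varepsilon+\varepsilon_1)\max\{|1-2/\xi_{k_0}|,1\}$. In Theorem~\ref{thm3.13} this came from Lemma~\ref{lem3.4} applied to the blocks of the approximating sequence of $Y$; here it follows instead from the factorization above, since that operator equals $T\circ\sum_kc_kB_k$ with $\|\{c_k\}\|_\infty=\max\{|1-2/\xi_{k_0}|,1\}$ and the blocks $B_k$ of the frame of $X$ satisfy $\sup_{N,\theta_k}\|\sum_{k\le N}\theta_kB_k\|\le2-\varepsilon+\varepsilon_1$. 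Once this is verified, no further computation is needed: one simply replaces $f_nT\otimes y_n$ by $f_n\otimes Tx_n$ and $\sum_{n=1}^{N_{k_0}}f_n\otimes y_n$ (acting on $Y$) by $\sum_{n=1}^{N_{k_0}}f_n\otimes x_n$ (acting on $X$) throughout the proof of Theorem~\ref{thm3.13}.
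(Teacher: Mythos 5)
Your proposal is correct and follows essentially the same route as the paper: reduce the $Y$-case to Theorem~\ref{thm3.13}, and for the $X$-case run the same chain of estimates with $f_n\otimes Tx_n$ in place of $f_nT\otimes y_n$, the key reflection bound coming from the left factorization $T\circ\bigl(id_X-\tfrac{2}{\xi_{k_0}}\sum_{n\le N_{k_0}}f_n\otimes x_n\bigr)$ and the block-unconditional bound of the frame of $X$ (exactly the point isolated in Remark~\ref{rem3.14}). The only quibble is a harmless off-by-one in your block coefficients ($c_k=1-2/\xi_{k_0}$ should hold for $k\le k_0-1$, since $\sum_{n\le N_{k_0}}f_n\otimes x_n$ consists of the first $k_0-1$ blocks), which does not affect the argument.
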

\begin{proof}
The case $Y$ with the $(2-\varepsilon)$-UBAP for any $\varepsilon>0$ has been proved in Theorem \ref{thm3.13}.
If $X$ has the $(2-\varepsilon)$-UBAP for any $\varepsilon>0$, then by Lemma \ref{decomposable}, for all $0<\varepsilon_1<\sigma/2$, there exists a block unconditional Schauder frame $\{(x_n,f_n)\}_{n\in\mathbb{N}} \subseteq X\times X^\ast$ such that $$x=\sum_{n=1}^{\infty}f_n(x) x_n=\sum_{n=1}^{\infty}(f_n\otimes x_n)(x)$$ for all $x\in X$ and there exists a subsequence $\{N_k\}_{k=1}^{\infty}$ of $\N_+$ such that $$\sup_{N\in\N,\theta_k=\pm1}\left\|\sum_{k=1}^{N}\theta_k \sum_{n=N_k+1}^{N_{k+1}}f_n\otimes x_n\right\|\leq 2-\varepsilon+\varepsilon_1 \text{ and } \sup_{N\in\mathbb{N}_+}\left\|\sum_{n=1}^{N}f_n \otimes x_n\right\|\leq
2-\varepsilon+\varepsilon_1.$$

For all $T \in B(X,Y)$ with $\|T\|_\alpha=1$ and for all $x \in X$, we have
\[Tx=T\left(\sum_{n=1}^{\infty}f_n(x)x_n\right)=\sum_{n=1}^{\infty}\left(f_n\otimes T x_n\right)(x).\]
By similar proof with Theorem \ref{thm3.13}, we can show that the countable set
   \[\left\{\frac{2\sum_{i=1}^{m} g_{i}^{\ast}\otimes s_{i}}{\norm{\sum_{i=1}^{m} g_{i}^{\ast}\otimes s_{i}}}: m\in \mathbb{N}_+, g_{i}^{\ast}\in\mathcal{A}\ (1\leq i\leq m), s_{i}\in\mathcal{B}\ (1\leq i\leq m)\right\}\] is a set of BCP points for $Z_\alpha$ ($1-\varepsilon/2 < \alpha \leq 1$).

 Actually we have
 \begin{align}
     &\quad\norm{T - \frac{2\sum_{n=1}^{N_{k_0}} x^{*}_{m_n}\otimes u_{l_n}}{\norm{\sum_{n=1}^{N_{k_0}} x^{*}_{m_n}\otimes u_{l_n}}}}_\alpha\notag\\
     %&=\alpha\norm{T - \frac{2\sum_{n=1}^{N_{k_0}} x^{*}_{m_n}\otimes u_{l_n}}{\norm{\sum_{n=1}^{N_{k_0}} x^{*}_{m_n}\otimes u_{l_n}}}}+(1-\alpha)\norm{T - \frac{2\sum_{n=1}^{N_{k_0}} x^{*}_{m_n}\otimes u_{l_n}}{\norm{\sum_{n=1}^{N_{k_0}} x^{*}_{m_n}\otimes u_{l_n}}}}_{Q(X,Y)}\notag\\
     &=\alpha\norm{\sum_{n=1}^{N_{k_0}} f_{n} \otimes T x_{n} -\sum_{n=1}^{N_{k_0}} \frac{2}{\xi_{k_0}} x^{*}_{m_n}\otimes u_{l_n}+\sum_{n=N_{k_0}+1}^{\infty} f_{n} \otimes T x_{n}}\notag\\
     &\quad+(1-\alpha)\norm{T - \frac{2\sum_{n=1}^{N_{k_0}} x^{*}_{m_n}\otimes u_{l_n}}{\xi_{k_0}}}_{Q(X,Y)}\notag\\
     &\leq\alpha\norm{\frac{2}{\xi_{k_0}}\left(\sum_{n=1}^{N_{k_0}} f_{n} \otimes T x_{n} -\sum_{n=1}^{N_{k_0}} x^{*}_{m_n}\otimes u_{l_n} \right)}\notag\\
     &\quad+\alpha\norm{\left(1-\frac{2}{\xi_{k_0}}\right)\sum_{n=1}^{N_{k_0}} f_{n} \otimes T x_{n}+\sum_{n=N_{k_0}+1}^{\infty} f_{n} \otimes T x_{n}}\notag\\
     &\quad+(1-\alpha)\norm{T - \frac{2\sum_{n=1}^{N_{k_0}} x^{*}_{m_n}\otimes u_{l_n}}{\xi_{k_0}}}_{Q(X,Y)}\notag\\
     %&=\norm{\sum_{n=1}^{N_{k_0}} f_{n} T \otimes y_{n} -\sum_{n=1}^{N_{k_0}} \frac{2 }{\xi} x^{*}_{m_n}\otimes u_{l_n}+\sum_{n=N_{k_0}+1}^{\infty} f_{n} T\otimes y_{n}}_\alpha\notag\\
     %&\leq \norm{\frac{2}{\xi}\left(\sum_{n=1}^{N_{k_0}} f_{n} T\otimes y_{n} -\sum_{n=1}^{N_{k_0}} x^{*}_{m_n}\otimes u_{l_n} \right) }_\alpha+\norm{\left(1-\frac{2}{\xi} \right)\sum_{n=1}^{N_{k_0}} f_{n} T\otimes y_{n}+\sum_{n=N_{k_0}+1}^{\infty} f_{n} T\otimes y_{n} }_\alpha \notag\\
     &\leq \alpha\norm{\frac{2}{\xi_{k_0}}\left(\sum_{n=1}^{N_{k_0}} f_{n} \otimes T x_{n} -\sum_{n=1}^{N_{k_0}} x^{*}_{m_n}\otimes u_{l_n} \right)}\notag\\
     &\quad+\alpha\left(2-\varepsilon+\varepsilon_1\right)\max\left
     \{\left|1-\frac{2}{\xi_{k_0}}\right|,1\right\}\|T\|+(1-\alpha)\|T\|_{Q(X,Y)}\notag\\
     &< \frac{\varepsilon_2}{8\xi_{k_0}}+\left(2-\varepsilon+\varepsilon_1\right)
     \left(\frac{2}{\xi_{k_0}}-1\right)\alpha\|T\|+(1-\alpha)\|T\|_{Q(X,Y)}\notag\\
     &\leq \frac{\varepsilon_2}{8\xi_{k_0}}+\left(2-\varepsilon+\varepsilon_1\right)\left(\frac{2}{\xi_{k_0}}-1\right)\notag\\
     %&=2-\varepsilon+\varepsilon_1+\frac{\varepsilon_2}{8\xi}+(2-\varepsilon+\varepsilon_1)
     %\left(\frac{2}{\xi}-2\right)\notag\\
     &< 2-\varepsilon+\frac{\sigma}{2}+\frac{\sigma}{8}+\frac{3\varepsilon_2}{4}\notag\\
     &< 2-\varepsilon+\sigma \notag\\
     &= 2\alpha-\left(2\alpha+\varepsilon-2-\sigma\right)\notag\\
     &< 2\alpha.\notag
     %&=\norm{\frac{2\sum_{n=1}^{N_{k_0}} x^{*}_{m_n}\otimes u_{l_n}}{\norm{\sum_{n=1}^{N_{k_0}} x^{*}_{m_n}\otimes u_{l_n}}}}_\alpha.\notag
   \end{align}
Thus the renormed space $Z_\alpha$ ($1-\varepsilon/2 < \alpha \leq 1$) has the $(2\alpha,2\alpha+\varepsilon-2-\sigma)$-UBCP.
\end{proof}
\begin{cor}
   Let $X$ and $Y$ be Banach spaces with $X^\ast$ and $Y$ separable, and one of them has the $\lambda$-RMAP. Let $\{S_n\}_{n=1}^\infty$ be the $\lambda$-reverse metric approximating sequence. If $\lambda$ and $(1+\sqrt{17})/8 <\alpha \leq 1$ satisfy
   \[ \frac{1}{\alpha} <\lambda<\frac{4\alpha-1}{\limsup_n\|S_n\|},\] then the renormed space $Z_\alpha=(B(X,Y),\|\cdot\|_\alpha)$ has the $(2\alpha,\sigma)$-UBCP for all \[0<\sigma<\min\{\alpha\lambda-1, 4\alpha-1-\lambda\limsup_n\|S_n\|\}.\]
\end{cor}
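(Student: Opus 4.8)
The plan is to reduce to results already established. If $Y$ has the $\lambda$-RMAP, the statement is exactly Corollary \ref{cor 3.8}, so it suffices to treat the case in which $X$ has the $\lambda$-RMAP, running the argument of Corollary \ref{cor 3.8} on the ``symmetric side'' in the same way that Theorem \ref{thm3.19} extends Theorem \ref{thm3.13}.

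So assume $X$ has the $\lambda$-RMAP with reverse metric approximating sequence $\{S_n\}_{n=1}^\infty$; then $\limsup_n\|S_n\|\le 2/\lambda<\infty$, so $X$ has the BAP. Proceeding as in the proof of Corollary \ref{cor 3.6}, I would use Lemma \ref{decomposable} to obtain a Schauder frame $\{(x_n,f_n)\}_{n\in\N_+}\subseteq X\times X^\ast$ with $\sup_N\|\sum_{n=1}^{N}f_n\otimes x_n\|\le\limsup_n\|S_n\|+\varepsilon_1$, built from $\{S_n\}_{n=1}^\infty$ so that $\|id_X-\lambda\sum_{n=1}^{N_k}f_n\otimes x_n\|\to1$. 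For $T\in B(X,Y)$ with $\|T\|_\alpha=1$ (hence $1\le\|T\|\le\alpha^{-1}$ and $\|T\|_{Q(X,Y)}\le1$), I would use the identity $Tx=\sum_{n=1}^\infty(f_n\otimes Tx_n)(x)$ from Theorem \ref{thm3.19}, select the index $k_0$ and the rational approximants $x^\ast_{m_n}\in\mathcal A$, $u_{l_n}\in\mathcal B$ exactly as in Corollary \ref{cor 3.8}, and put $\xi_{k_0}=\|\sum_{n=1}^{N_{k_0}}x^\ast_{m_n}\otimes u_{l_n}\|$. The only structural change is that the quantity to control, by the analogue of Remark \ref{rem3.14}, is the right composition $\|T\circ(id_X-\tfrac{2}{\xi_{k_0}}\sum_{n=1}^{N_{k_0}}f_n\otimes x_n)\|_\alpha$; and since $T\circ(\sum_{n=1}^{N_{k_0}}f_n\otimes x_n)$ is finite rank, hence compact, the $Q(X,Y)$-norm of $T\circ(id_X-\tfrac{2}{\xi_{k_0}}\sum_{n=1}^{N_{k_0}}f_n\otimes x_n)$ equals $\|T\|_{Q(X,Y)}$, just as in Corollary \ref{cor 3.8}.

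The core of the proof is then the chain
\[
\begin{aligned}
\Bigl\|T\circ\Bigl(id_X-\tfrac{2}{\xi_{k_0}}\textstyle\sum_{n=1}^{N_{k_0}}f_n\otimes x_n\Bigr)\Bigr\|_\alpha
&\le \alpha\Bigl\|T\circ\Bigl(id_X-\lambda\textstyle\sum_{n=1}^{N_{k_0}}f_n\otimes x_n\Bigr)\Bigr\|\\
&\quad+\alpha\Bigl|\lambda-\tfrac{2}{\xi_{k_0}}\Bigr|\,\Bigl\|T\circ\textstyle\sum_{n=1}^{N_{k_0}}f_n\otimes x_n\Bigr\|+(1-\alpha)\|T\|_{Q(X,Y)}\\
&\le \alpha\|T\|+\alpha\Bigl|\lambda-\tfrac{2}{\beta}\Bigr|\beta+(1-\alpha)\|T\|_{Q(X,Y)}+o(1)\\
&=1+\bigl|\alpha\lambda\beta-2\alpha\bigr|+o(1),
\end{aligned}
\]
where $\beta=\lim_k\xi_k$ along a subsequence, $\alpha\lambda\beta\le2$ (since $\lambda\beta\le2\|T\|$ and $\alpha\|T\|\le1$), and $o(1)$ is made small by $\varepsilon_1,\varepsilon_2$; this is verbatim the display in the proof of Corollary \ref{cor 3.8}, with $id_Y,y_n$ and left composition replaced by $id_X,x_n$ and right composition. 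The hypotheses $(1+\sqrt{17})/8<\alpha\le1$, $1/\alpha<\lambda<(4\alpha-1)/\limsup_n\|S_n\|$ and $0<\sigma<\min\{\alpha\lambda-1,\,4\alpha-1-\lambda\limsup_n\|S_n\|\}$ are precisely what forces $1+|\alpha\lambda\beta-2\alpha|<2\alpha-\sigma$ for every admissible $\beta$ (and keeps the bounds on $\xi_{k_0}$ consistent), after which the proof of Theorem \ref{thm3.13} applies with the obvious modifications: the countable set $\{2\sum_{i=1}^m g_i^\ast\otimes s_i/\|\sum_{i=1}^m g_i^\ast\otimes s_i\|\}$ is a ball-covering family witnessing the $(2\alpha,\sigma)$-UBCP of $Z_\alpha$.

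The proof is essentially bookkeeping once the variant identity for $X$-frames is in place; the only substantive point is the estimate $\|T\circ(id_X-\lambda\sum_{n=1}^{N_{k_0}}f_n\otimes x_n)\|\le\|T\|+o(1)$, which is where the $\lambda$-RMAP of $X$ is genuinely used --- one needs the frame from Lemma \ref{decomposable} to be built so that $id_X-\lambda\sum_{n=1}^{N_k}f_n\otimes x_n$ is asymptotically $id_X-\lambda S_k$, and then bounds it by $\|id_X-\lambda S_k\|\,\|T\|\to\|T\|$. I expect this (already handled in Corollary \ref{cor 3.6}) to be the most delicate point; everything else repeats Corollaries \ref{cor 3.6} and \ref{cor 3.8} mutatis mutandis.
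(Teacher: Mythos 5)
Your proposal is correct and follows essentially the same route the paper intends: the paper states this corollary without proof immediately after Theorem \ref{thm3.19}, leaving it to the reader to combine the symmetric frame identity $Tx=\sum_n(f_n\otimes Tx_n)(x)$ with the estimates of Corollaries \ref{cor 3.6} and \ref{cor 3.8}, which is exactly what you do. Your verification that the hypotheses on $\alpha$, $\lambda$ and $\sigma$ force $1+|\alpha\lambda\beta-2\alpha|<2\alpha-\sigma$ for every admissible $\beta$, and your observation that the quotient norm is unchanged by subtracting the finite-rank term, match the paper's argument in all essentials.
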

\begin{cor}
  Let $X$ and $Y$ be Banach spaces with $X^\ast$ and $Y$ separable. If $X$ or $Y$ has the $\varepsilon$-RMAP for some $\varepsilon>1$, then $B(X,Y)$ has the UBCP.
\end{cor}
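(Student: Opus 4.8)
The plan is to obtain this as a quick corollary of Corollary~\ref{cor 3.6} and its symmetric companion (the $X$-side version built into Theorem~\ref{thm3.19}), once we observe that the RMAP constant is automatically at most $2$. Suppose $Z \in \{X,Y\}$ has the $\varepsilon$-RMAP with reverse metric approximating sequence $\{S_n\}_{n=1}^\infty$, i.e.\ $\lim_n \|id_Z - \varepsilon S_n\| = 1$. Then $\varepsilon\|S_n\| \le 1 + \|id_Z - \varepsilon S_n\| \to 2$, so $\limsup_n \|S_n\| \le 2/\varepsilon$; since $\{S_n\}$ converges strongly to $id_Z$ we also have, for any fixed $z \ne 0$, $\|S_n\| \ge \|S_n z\|/\|z\| \to 1$, whence $\limsup_n \|S_n\| \ge 1$. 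Combining these, $1 < \varepsilon \le 2$, which is exactly the range in which Corollary~\ref{cor 3.6} operates (with $\lambda = \varepsilon$).

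\emph{Case $Y$ has the $\varepsilon$-RMAP.} Corollary~\ref{cor 3.6} applies directly and gives that $B(X,Y)$ has the $(2,\sigma)$-UBCP for every $0 < \sigma < \varepsilon - 1$. Choosing any such $\sigma$ — say $\sigma = (\varepsilon-1)/2$ — we conclude that $B(X,Y)$ has the $(2,\sigma)$-UBCP, hence the UBCP, since by definition the UBCP only asks for the existence of \emph{some} $r,\delta > 0$ realising an $(r,\delta)$-uniform ball-covering.

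\emph{Case $X$ has the $\varepsilon$-RMAP.} Here I would rerun the proof of Corollary~\ref{cor 3.6} through the $X$-side frame expansion used in the proof of Theorem~\ref{thm3.19}: take the block unconditional Schauder frame $\{(x_n,f_n)\}$ of $X$ furnished by Lemma~\ref{decomposable}, so that $Tx = \sum_n (f_n \otimes Tx_n)(x)$, and note that $\sum_{n=1}^{N_k} f_n \otimes x_n = S_k$. By Remark~\ref{rem3.14} the quantity to be controlled is now $\|T \circ (id_X - \frac{2}{\xi_{k_0}}\sum_{n=1}^{N_{k_0}} f_n \otimes x_n)\|$ rather than $\|(id_Y - \frac{2}{\xi_{k_0}}\sum_{n=1}^{N_{k_0}} f_n \otimes y_n) \circ T\|$; using $\|T\|=1$, the estimate $\|T\circ(id_X - \lambda S_{k_0})\| \le \|id_X - \lambda S_{k_0}\|\,\|T\| \to 1$, and the $\lambda$-RMAP of $X$ in place of that of $Y$, the same chain of inequalities as in Corollary~\ref{cor 3.6} yields the $(2,\sigma)$-UBCP of $B(X,Y)$ for all $0 < \sigma < \varepsilon - 1$, hence the UBCP.

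I do not anticipate a genuine obstacle: the content is already contained in Corollary~\ref{cor 3.6} together with the $X$/$Y$ symmetry exploited when passing from Theorem~\ref{thm3.13} to Theorem~\ref{thm3.19}. The only points needing care are the direction of composition in the symmetric case and making sure Lemma~\ref{lem3.4} is invoked for the $X$-frame; these are precisely the substitutions carried out in the proof of Theorem~\ref{thm3.19}, so the argument transfers with no new ideas.
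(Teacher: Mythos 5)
Your proposal is correct and takes essentially the same route as the paper: the $Y$-case is exactly Corollary~\ref{cor 3.6} (whose proof already contains the observation that the RMAP forces $\lambda\le 2/\limsup_n\|S_n\|\le 2$, which you usefully make explicit via $\limsup_n\|S_n\|\ge 1$), and the $X$-case is the same estimate run through the $X$-side frame of Theorem~\ref{thm3.19}, where the controlled quantity becomes $\|T\circ(id_X-\tfrac{2}{\xi_{k_0}}\sum_{n=1}^{N_{k_0}}f_n\otimes x_n)\|$ and is bounded by $\|id_X-\lambda S_{k_0}\|\,\|T\|$ plus the same error terms. No genuine differences from the paper's (implicit) argument.
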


\begin{cor}
  Let $X$ and $Y$ be Banach spaces with $X^\ast$ and $Y$ separable. If $X$ or $Y$ has an approximating sequence $\{S_n\}_{n=1}^\infty$ such that $\lim_n\|id-2S_n\|=\lambda$ for some $\lambda>0$, then for all \[\frac{\lambda}{4}+\frac{\limsup_n\|S_n\|}{2} < \alpha \leq 1,\]the renormed space $Z_\alpha=\left(B(X,Y), \|\cdot\|_\alpha \right)$ has the $(2\alpha,\sigma)$-UBCP for all \[0<\sigma<4\alpha-2\limsup_{n}\|S_n\|-\lambda.\]
\end{cor}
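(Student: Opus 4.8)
The corollary is the ``both-sides'' version of Corollary \ref{cor 3.9}, so the plan is to split into two cases. If $Y$ carries the approximating sequence $\{S_n\}_{n=1}^\infty$ with $\lim_n\|id_Y-2S_n\|=\lambda$, then the statement is exactly Corollary \ref{cor 3.9} and there is nothing to do; so I would concentrate on the case in which $X$ has such a sequence. The first step is to record that $\limsup_n\|S_n\|<\infty$ (indeed $\|S_n\|\le\tfrac12(1+\|id_X-2S_n\|)$), and also that $\liminf_n\|id_X-2S_n\|\ge 1$ by lower semicontinuity of the norm under strong convergence, so $\lambda\ge1$; hence $X$ has the BAP. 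Applying Lemma \ref{decomposable} to this approximating sequence, exactly as in the proof of Corollary \ref{cor 3.6}, yields a Schauder frame $\{(x_n,f_n)\}_{n\in\N_+}\subseteq X\times X^\ast$ with $\|x_n\|=1$, $\|f_n\|\le1$, a breakpoint subsequence $\{N_k\}$, the bound $\sup_N\norm{\sum_{n=1}^N f_n\otimes x_n}\le\limsup_n\|S_n\|+\varepsilon_1$, and such that the partial sums $\sum_{n=1}^{N_k}f_n\otimes x_n$ reproduce the approximating operators $S_k$, so that $\lim_k\norm{id_X-2\sum_{n=1}^{N_k}f_n\otimes x_n}=\lambda$.

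The second step is to run the argument of Theorem \ref{thm3.19} in its ``$X$-side'' form together with the estimates of Corollary \ref{cor 3.9}. For $T\in B(X,Y)$ with $\|T\|_\alpha=1$ one has $1\le\|T\|\le1/\alpha$ and $\|T\|_{Q(X,Y)}\le1$, and $Tx=\sum_{n=1}^\infty(f_n\otimes Tx_n)(x)$ for every $x$. Using the separability of $X^\ast$ and $Y$, fix countable dense sets $\mathcal{A}$ and $\mathcal{B}$ in the bounded balls $B_{X^\ast}$ and $\alpha^{-1}B_Y$ respectively, the latter radius being forced by $\|Tx_n\|\le\|T\|\le\alpha^{-1}$. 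As in Theorem \ref{thm3.12} one first picks $k_0$ with $\norm{\sum_{n=1}^{N_{k_0}}f_n\otimes Tx_n}$ bounded below by $\max\{\|T\|-\varepsilon_2/16,\,3/4\}$; then one approximates each $f_n$ by some $x^\ast_{m_n}\in\mathcal{A}$ and each $Tx_n$ by some $u_{l_n}\in\mathcal{B}$ finely enough that $\norm{\sum_{n=1}^{N_{k_0}}f_n\otimes Tx_n-\sum_{n=1}^{N_{k_0}}x^\ast_{m_n}\otimes u_{l_n}}$ is negligible, and sets $\xi_{k_0}=\norm{\sum_{n=1}^{N_{k_0}}x^\ast_{m_n}\otimes u_{l_n}}$. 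The claim, as in Theorems \ref{thm3.12}--\ref{thm3.19}, is that the countable set of normalized points $2\sum_i g_i^\ast\otimes s_i/\norm{\sum_i g_i^\ast\otimes s_i}$ ($g_i^\ast\in\mathcal{A}$, $s_i\in\mathcal{B}$) is a set of BCP points for $Z_\alpha$, and by the reduction of Remark \ref{rem3.14} this comes down to bounding
\[\norm{T\circ\left(id_X-\frac{2}{\xi_{k_0}}\sum_{n=1}^{N_{k_0}}f_n\otimes x_n\right)}_\alpha,\]
the only change from Corollary \ref{cor 3.9} being that the operator $id_Y-\tfrac2{\xi}\sum f_n\otimes y_n$ composed on the left of $T$ is replaced by $id_X-\tfrac2{\xi}\sum f_n\otimes x_n$ composed on the right.

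Finally, I would estimate this quantity exactly as in Corollary \ref{cor 3.9}: split off the $\alpha$- and $(1-\alpha)$-parts of $\|\cdot\|_\alpha$, use contractivity of composition in $\|\cdot\|_{Q(X,Y)}$, and write $id_X-\tfrac{2}{\xi_{k_0}}\sum f_n\otimes x_n=(id_X-2\sum f_n\otimes x_n)+(2-\tfrac{2}{\xi_{k_0}})\sum f_n\otimes x_n$ to obtain the upper bound
\[\alpha\|T\|\,\norm{id_X-2\sum_{n=1}^{N_{k_0}}f_n\otimes x_n}+\alpha\Big|2-\tfrac{2}{\xi_{k_0}}\Big|\,\|T\|\,\norm{\sum_{n=1}^{N_{k_0}}f_n\otimes x_n}+(1-\alpha)\|T\|_{Q(X,Y)}.\]
Feeding in $\norm{id_X-2\sum_{n=1}^{N_{k_0}}f_n\otimes x_n}\le\lambda+o(1)$, $\norm{\sum_{n=1}^{N_{k_0}}f_n\otimes x_n}\le\limsup_n\|S_n\|+\varepsilon_1$, $\|T\|\le1/\alpha$, $\|T\|_{Q(X,Y)}\le1$, and (after passing to a subsequence) $\xi_{k_0}\to\beta$ with $\max\{\|T\|,1/2\}-o(1)\le\beta\le\|T\|\limsup_n\|S_n\|+o(1)$, the bound collapses to $\lambda+|2\alpha\beta-2\alpha|+o(1)$, which is strictly less than $2\alpha$ precisely when $\alpha>\lambda/4+\limsup_n\|S_n\|/2$; quantifying the gap and choosing $\varepsilon_1,\varepsilon_2$ small relative to $\sigma$ gives the $(2\alpha,\sigma)$-UBCP for every $0<\sigma<4\alpha-2\limsup_n\|S_n\|-\lambda$. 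I expect the only genuine care needed --- hence the main, though mild, obstacle --- is the bookkeeping of the chain of inequalities relating $\xi_{k_0}$, $\beta$, $\|T\|$ and $\limsup_n\|S_n\|$ once $\|T\|$ is allowed to exceed $1$, together with checking that the Lemma \ref{decomposable} frame built from $\{S_n\}$ genuinely reproduces the approximating operators at the breakpoints so that the hypothesis $\lim_n\|id_X-2S_n\|=\lambda$ is usable in the estimate; everything else is the routine computation already carried out in Theorem \ref{thm3.19} and Corollary \ref{cor 3.9}.
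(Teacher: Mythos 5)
Your proposal is correct and follows essentially the same route the paper intends: the paper states this corollary without proof as the symmetric version of Corollary \ref{cor 3.9}, obtained by replacing the frame expansion of $Y$ with that of $X$ via $Tx=\sum_n(f_n\otimes Tx_n)(x)$ (as in Theorem \ref{thm3.19}) and then repeating the estimate of Corollary \ref{cor 3.9} with $\bigl(id_X-\tfrac{2}{\xi}\sum f_n\otimes x_n\bigr)$ composed on the right of $T$. Your bookkeeping (including the observation that the dense set in $Y$ must accommodate $\|Tx_n\|\le 1/\alpha$, and that the breakpoint partial sums of the Lemma \ref{decomposable} frame equal the $S_k$) matches what the paper's argument requires.
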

\begin{cor}
     Let $X$ and $Y$ be Banach spaces with $X^\ast$ and $Y$ separable.  If $X$ or $Y$ has an approximating sequence $\{S_n\}_{n=1}^\infty$ such that $\lim_n\|id-2S_n\| < 3/2$, then  $B(X,Y)$ has the UBCP.
\end{cor}

The mutual symmetry between the two theorems suggests that we need to minimize the influence of the domain and range and instead shift our focus to operators. Here we introduce the notion of (unconditional) bounded approximation property of operators.
\begin{de}\label{defn3.22}(\cite{LSZ2023})
  Let $X$ and $Y$ be separable Banach spaces. Let $\lambda >0$ and $S \in B(X,Y)$. Then $S$ is said to have the $\lambda$-bounded approximation property ($\lambda$-BAP) if there exists  $\left\{S_{n}\right\}_{n \in \N_+}\subseteq B(X,Y)$ with
\[\sup_{n \in \N_+}\|S_{n}\| \leq \lambda\] such that both $\dim(S_{n}X)<\infty$ for all $n \in \N_+$ and $\lim_{n}\|S_{n}x-S x\|=0$ for all $x \in X$. We say $S$ has the bounded approximation property (BAP) if there exists a $\lambda>0$ such that $S$ has the $\lambda$-bounded approximation property.
\end{de}

\begin{de}\label{defn3.23}
  Let $X$ and $Y$ be separable Banach spaces. Let $\lambda >0$ and $S \in B(X,Y)$. Then $S$ is said to have the $\lambda$-unconditional bounded approximation property ($\lambda$-UBAP) if for all $\varepsilon>0$ there exists  $\left\{R_{n}=S_n-S_{n-1}\right\}_{n \in \N_+}\subseteq B(X,Y)$ and $S_0=0$ with
\[\sup_{N \in \N_+, \varepsilon_n=\pm1}\left\|\sum_{n=1}^{N}\varepsilon_n R_{n}\right\| \leq \lambda+\varepsilon\] such that both $\dim(S_{n}X)<\infty$ for all $n \in \N_+$ and $\lim_{n}\|S_{n}x-S x\|=0$ for all $x \in X$.
\end{de}

\begin{de}
  Let $X$ be a separable Banach space, $Y$ be a Banach space and $T \in B(X,Y)$. Then $T$ is said to be linear decomposable if there exists $\left\{(y_{n},f_{n})\right\}_{n\in\N}\subseteq Y \times X^* $ such that $T x=\sum_{n=1}^{\infty} f_{n}(x) y_{n}$ for all $x \in X$.  And $T$ is said to be block unconditional decomposable if there exists a subsequence
  $\{N_k\}_{k=1}^\infty$ of $\N_+$ such that $$\sup_{N\in\mathbb{N_+},\theta_k=\pm1}\left\|\sum_{k=1}^{N}\theta_k \sum_{n=N_k+1}^{N_{k+1}}f_n\otimes y_n\right\| < \infty.$$
\end{de}

It was proved in \cite{LSZ2023} that $T$ has the BAP if and only if $T$ is linear decomposable for all $T \in B(X,Y)$.

The method of proof in Lemma \ref{decomposable} carries over to the relationship between the $\lambda$-UBAP and block unconditional decomposition for all $T \in B(X,Y)$.
\begin{lem}
  Let $T\in B(X,Y)$ with the $\lambda$-UBAP and $\varepsilon > 0$. Then there exists a (block unconditional) decomposition  $\{(y_n,f_n)\}_{n\in\mathbb{N}+} \subseteq Y\times X^\ast$ such that for all $x\in X$ we have
  $$Tx=\sum_{n=1}^{\infty}f_n(x) y_n.$$  And there exists a subsequence
  $\{N_k\}_{k=1}^{\infty}$ of $\N_+$ such that
  $$\sup_{N\in\mathbb{N}_+,\theta_k=\pm1}\left\|\sum_{k=1}^{N}\theta_k \sum_{n=N_k+1}^{N_{k+1}}f_n \otimes y_n\right\|\leq \lambda+\varepsilon  \text{ and } \sup_{N\in\mathbb{N}_+}\left\|\sum_{n=1}^{N}f_n \otimes y_n\right\|\leq \lambda+\varepsilon.$$
  In addition, we can assume $\|y_n\| = 1$ and $\|f_n\| \leq 1$  for all $n \in \N_+$.
\end{lem}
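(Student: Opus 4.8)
The plan is to reproduce the proof of Lemma~\ref{decomposable} almost verbatim, the only structural change being that the approximating operators now act from $X$ into $Y$ and the finite-dimensional building blocks live in $Y$ rather than in $X$. First I would invoke Definition~\ref{defn3.23} to fix, for the given $\varepsilon>0$, a sequence $\{R_n=S_n-S_{n-1}\}_{n\in\N_+}$ with $S_0=0$, each $S_n\in B(X,Y)$ of finite rank, $\lim_n\|S_nx-Tx\|=0$ for all $x\in X$, and $\sup_{N,\theta_n=\pm1}\norm{\sum_{n=1}^N\theta_nR_n}\le\lambda+\varepsilon/2$. Taking all $\theta_n=1$ and telescoping (using $S_0=0$) gives $\|S_N\|\le\lambda+\varepsilon/2$ for every $N$, and $Tx=\sum_{n=1}^\infty R_nx$ for all $x$. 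Then set $E_n:=R_nX\subseteq S_nX+S_{n-1}X\subseteq Y$, which is finite-dimensional, and $d_n:=\dim E_n$.

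Next, for each $n$ I would take a normalized Schauder basis $\{z_1^n,\dots,z_{d_n}^n\}$ of $E_n$ with basis constant $C_n$, extend its biorthogonal functionals $\{z_j^{n*}\}$ from $E_n^*$ to $Y^*$ by Hahn--Banach (so $R_nx=\sum_{j=1}^{d_n}z_j^{n*}(R_nx)z_j^n$ still holds for all $x$), and pick $M_n\in\N_+$ large enough that $M_n\ge2\lambda+\varepsilon$, that $C_n/M_n\le\varepsilon/(4\lambda+2\varepsilon)$, and that $\|z_j^{n*}\circ R_n\|\le M_n$ for $1\le j\le d_n$. Then, exactly as in Lemma~\ref{decomposable}, I set $y_i:=z_j^n$ and $f_i:=(z_j^{n*}\circ R_n)/M_n$ whenever $i=\sum_{k=1}^{n-1}M_kd_k+rd_n+j$ with $0\le r\le M_n-1$ and $1\le j\le d_n$; this gives $\|y_i\|=1$ and $\|f_i\|\le1$, and, since each term $z_j^{n*}(R_nx)z_j^n/M_n$ now occurs $M_n$ times in the sum, $Tx=\sum_{n=1}^\infty R_nx=\sum_{i=1}^\infty f_i(x)y_i$ for all $x$, i.e. $\sum_i f_i\otimes y_i$ converges strongly to $T$.

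For the two bounds I would run the same estimate as in Lemma~\ref{decomposable}. For a partial-sum index $s=\sum_{k=1}^{n-1}M_kd_k+rd_n+j$ one rewrites $\sum_{i=1}^s f_i(x)y_i$ as $\frac{r}{M_n}S_nx+(1-\frac{r}{M_n})S_{n-1}x+\frac{1}{M_n}\sum_{j'=1}^{j}z_{j'}^{n*}(R_nx)z_{j'}^n$ and estimates it, using the triangle inequality, the convexity of the coefficients, the bounds $\|S_n\|,\|S_{n-1}\|\le\lambda+\varepsilon/2$, the basis-constant inequality $\norm{\sum_{j'\le j}z_{j'}^{n*}(R_nx)z_{j'}^n}\le C_n\|R_nx\|$, and $\|R_nx\|\le\|S_nx\|+\|S_{n-1}x\|\le(2\lambda+\varepsilon)\|x\|$, to obtain $(\lambda+\varepsilon/2)\|x\|+\frac{C_n}{M_n}(2\lambda+\varepsilon)\|x\|\le(\lambda+\varepsilon)\|x\|$; hence $\sup_N\norm{\sum_{n=1}^N f_n\otimes y_n}\le\lambda+\varepsilon$. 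Finally, with $N_1:=0$ and $N_k:=\sum_{i=1}^{k-1}d_iM_i$ for $k\ge2$, the block of indices $(N_k,N_{k+1}]$ reassembles $R_k$, so $\sum_{n=N_k+1}^{N_{k+1}}f_n\otimes y_n=R_k$ and the block-unconditional bound collapses to $\sup_{N,\theta_k=\pm1}\norm{\sum_{k=1}^N\theta_kR_k}\le\lambda+\varepsilon/2\le\lambda+\varepsilon$.

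The one point worth flagging is that, unlike Lemma~\ref{lem1}, Definition~\ref{defn3.23} provides no ``nested'' relation among the $S_n$ --- they map $X\to Y$, so a composition $S_mS_n$ is not even defined --- so one cannot copy the setup of Lemma~\ref{decomposable} literally. The mild reassurance is that that proof never actually uses the identity $S_mS_n=S_n$: it uses only the telescoping identities $\sum_{k\le n}R_k=S_n$, the uniform bound $\|S_n\|\le\lambda+\varepsilon/2$, and the unconditional bound on $\sum\theta_nR_n$, all of which are available here. So I expect no genuine obstacle; the sole real adjustment is the bookkeeping, with the blocks $E_n$ and vectors $y_i$ now living in $Y$ while the coefficient functionals $f_i=z_j^{n*}\circ R_n$ live in $X^*$.
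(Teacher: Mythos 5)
Your proposal is correct and is exactly what the paper intends: the paper gives no separate proof of this lemma, stating only that ``the method of proof in Lemma~\ref{decomposable} carries over,'' and your write-up is precisely that carry-over, with the blocks $E_n=R_nX$ now sitting in $Y$ and the coefficient functionals $z_j^{n*}\circ R_n$ in $X^*$. The one point you flag --- that Definition~\ref{defn3.23} supplies no nesting relation $S_mS_n=S_n$ --- is correctly resolved, since the argument in Lemma~\ref{decomposable} only uses the telescoping identity $\sum_{k\le n}R_k=S_n$, the uniform bound on $\|S_n\|$ obtained by taking all signs equal to $1$, and the unconditional bound on $\sum\theta_nR_n$, all of which survive in the operator setting.
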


Let $T \in B(X,Y)$ with the UBAP, then define
$$\textbf{UBAP}(T)=\inf\{\lambda: T \text{ has the } \lambda\text{-UBAP}\}.$$ Thus if $X$ has the UBAP then
\[\textbf{UBAP}(id_X)=\textbf{UBAP}(X).\]
And if either $\textbf{UBAP}(X)=\lambda$ or $\textbf{UBAP}(Y)=\lambda$, then for all $T \in B(X,Y)$, we have $\textbf{UBAP}\left(T/\|T\|\right) \leq \lambda$. Therefore we have the following result.
\begin{thm}\label{thm3.26}
  Let $X$ and $Y$ be Banach spaces with $X^\ast$ and $Y$ separable. For any $\varepsilon>0$, define \[\mathcal{Z}_\varepsilon=\left\{T \in B(X,Y): \textbf{UBAP}\left(\frac{T}{\|T\|}\right) \leq 2-\varepsilon \right\}.\]
  Then for all $1-\varepsilon/2 < \alpha \leq 1$, $\left(\mathcal{Z}_\varepsilon, \|\cdot\|_\alpha \right)$ has the $(2\alpha,2\alpha+\varepsilon-2-\sigma)$-UBCP for all $0<\sigma<2\alpha+\varepsilon-2$.
\end{thm}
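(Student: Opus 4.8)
The plan is to transcribe the proof of Theorem~\ref{thm3.19}, the only structural change being that its single ambient block unconditional Schauder frame (which there came from the $(2-\varepsilon)$-UBAP of $X$ or of $Y$) is replaced by a \emph{separate} decomposition for each operator $T\in\mathcal Z_\varepsilon$. The membership condition $\textbf{UBAP}(T/\norm{T})\le 2-\varepsilon$ is exactly what Definition~\ref{defn3.23} asks, so the preceding lemma (the operator analogue of Lemma~\ref{decomposable}) applies to $T/\norm{T}$ and supplies, for every small $\varepsilon_1>0$, a block unconditional decomposition $\{(y_n,f_n)\}_{n\in\N_+}\subseteq Y\times X^*$ with $\norm{y_n}=1$, $\norm{f_n}\le1$, $(T/\norm{T})x=\sum_n f_n(x)y_n$ for all $x$, and a subsequence $\{N_k\}_{k=1}^\infty$ of $\N_+$ along which both the block unconditional frame bound and the frame bound are $\le 2-\varepsilon+\varepsilon_1$; equivalently $Tx=\sum_n(\norm{T}f_n\otimes y_n)(x)$. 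Using separability of $X^*$ and $Y$, I would fix once and for all countable dense subsets $\mathcal A\subseteq X^*$ and $\mathcal B\subseteq Y$ and take as candidate covering centers the countable, $T$-independent set
\[\mathcal C=\left\{\frac{2\sum_{i=1}^m g_i^*\otimes s_i}{\norm{\sum_{i=1}^m g_i^*\otimes s_i}}:\ m\in\N_+,\ g_i^*\in\mathcal A,\ s_i\in\mathcal B,\ \sum_{i=1}^m g_i^*\otimes s_i\ne 0\right\}.\]
Every $W\in\mathcal C$ is finite rank, hence compact, so $\norm{\cdot}_{Q(X,Y)}$ vanishes on it and $\norm{W}_\alpha=2\alpha$; moreover $\textbf{UBAP}(W/\norm{W})=1\le 2-\varepsilon$ (if $\varepsilon>1$ then $\mathcal Z_\varepsilon=\emptyset$ and there is nothing to prove), so $\mathcal C\subseteq\mathcal Z_\varepsilon$, and one shows $\mathcal C$ is a set of BCP points realising the stated parameters.

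Fix $T\in\mathcal Z_\varepsilon$ with $\norm{T}_\alpha=1$. As in Theorem~\ref{thm3.13}, $\norm{\cdot}_{Q(X,Y)}\le\norm{\cdot}$ forces $\norm{T}\ge1$ and $\norm{T}_{Q(X,Y)}\le1$. Since the partial sums $T_{k}:=\sum_{n=1}^{N_k}\norm{T}f_n\otimes y_n$ converge to $T$ in the strong operator topology, lower semicontinuity of the operator norm gives $\norm{T}\le\liminf_k\norm{T_k}$, hence, for any prescribed small $\varepsilon_2>0$, a block boundary index $k_0$ with $\max\{\norm{T}-\varepsilon_2/16,\ 3/4\}<\norm{T_{k_0}}\le(2-\varepsilon+\varepsilon_1)\norm{T}$. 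Approximating each $\norm{T}f_n$ within $\mathcal A$ and each $y_n$ within $\mathcal B$ ($1\le n\le N_{k_0}$) finely enough, one obtains $g_n^*\in\mathcal A$, $s_n\in\mathcal B$ with $\norm{T_{k_0}-\sum_{n=1}^{N_{k_0}}g_n^*\otimes s_n}$ as small as desired; putting $\xi_{k_0}:=\norm{\sum_{n=1}^{N_{k_0}}g_n^*\otimes s_n}$ (so $\xi_{k_0}$ is squeezed close to $\norm{T_{k_0}}$), the center $W_T:=\frac{2}{\xi_{k_0}}\sum_{n=1}^{N_{k_0}}g_n^*\otimes s_n$ lies in $\mathcal C$, and one splits
\[T-W_T=\frac{2}{\xi_{k_0}}\left(T_{k_0}-\sum_{n=1}^{N_{k_0}}g_n^*\otimes s_n\right)+\norm{T}\left(\Big(1-\frac{2}{\xi_{k_0}}\Big)\sum_{n=1}^{N_{k_0}}f_n\otimes y_n+\sum_{n=N_{k_0}+1}^{\infty}f_n\otimes y_n\right).\]
The first summand is arbitrarily small, while the second equals $T-\frac{2}{\xi_{k_0}}T_{k_0}$ and, by the $\ell^\infty$-multiplier inequality of Lemma~\ref{lem3.4} applied to the block decomposition (cf.\ Remark~\ref{rem3.14}), has norm at most $\norm{T}(2-\varepsilon+\varepsilon_1)\max\{|1-2/\xi_{k_0}|,1\}$. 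Feeding in $\norm{T}\ge1$, $\norm{T}_{Q(X,Y)}\le1$, the (harmless) normalisation $\xi_{k_0}\le1$, and $\varepsilon_1<\sigma/2$, $\varepsilon_2<\sigma/4$, the very same chain of inequalities as in the proof of Theorem~\ref{thm3.13} yields
\[\norm{T-W_T}_\alpha<2-\varepsilon+\sigma=2\alpha-(2\alpha+\varepsilon-2-\sigma)<2\alpha=\norm{W_T}_\alpha,\]
so $\{B(W,2-\varepsilon+\sigma):W\in\mathcal C\}$ covers the unit sphere of $(\mathcal Z_\varepsilon,\norm{\cdot}_\alpha)$ with all radii $\le 2\alpha$ and every ball disjoint from $B(0,2\alpha+\varepsilon-2-\sigma)$.

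Since essentially every inequality is a copy of one already established, the task is organisational rather than new; the one genuinely new point, and the place I expect the main difficulty, is to make sure that replacing a fixed ambient frame by an operator-by-operator decomposition does not destroy the uniformity of the covering, i.e.\ that all centers come from one fixed countable family. This is precisely why one must absorb the scalar $\norm{T}$ into $T/\norm{T}$ \emph{before} invoking the decomposition lemma, so that the normalisation defining $W_T$ cancels $\norm{T}$ and leaves a center built purely from $\mathcal A$ and $\mathcal B$; it also explains the up-front check that finite rank operators belong to $\mathcal Z_\varepsilon$ (hence $\mathcal C\subseteq\mathcal Z_\varepsilon$). The remaining bookkeeping --- the two-sided control of $\xi_{k_0}$, and the case $\xi_{k_0}>1$, which is the easier one --- is handled exactly as in the proof of Theorem~\ref{thm3.13}.
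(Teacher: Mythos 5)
Your proposal follows exactly the route the paper intends: the paper states Theorem~\ref{thm3.26} as an immediate consequence of the operator-level decomposition lemma applied to $T/\norm{T}$ together with the argument of Theorems~\ref{thm3.13} and \ref{thm3.19}, which is precisely what you carry out. Your additional checks --- that the centers form a single $T$-independent countable family built from $\mathcal A$ and $\mathcal B$, that they lie in $\mathcal Z_\varepsilon$, and that $\mathcal Z_\varepsilon=\emptyset$ when $\varepsilon>1$ --- are correct and only make explicit what the paper leaves implicit.
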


Next we consider the case when either $X$ or $Y$ has the UMAP. The UMAP is associated with the lattice structure of compact operator space $K(X)$ and $B(X)$. Casazza and Kalton \cite{C1991} introduced the notion of u-ideal which is a generalization of M-ideal \cite{AE1972,HW1993} and it is well known that if $X$ is an M-ideal in $Y$ then $X$ is a u-ideal.

\begin{de}[\cite{C1991}]
  Let $X$ be a subspace of a Banach space $Y$. Then $X$ is a u-ideal in $Y$ if there is a projection $P$ from $Y^*$ onto $X^{\bot}$ (the kernel of $P$) such that $\|id-2P\|=1$.
\end{de}

The following theorem demonstrates the important relationship between the UMAP and u-ideals.

\begin{thm}[\cite{C1991}]
Let $X$ be a separable reflexive Banach space with the approximation property. Then $X$ has the UMAP if and only if $K(X)$ is a u-ideal in $B(X)$.
\end{thm}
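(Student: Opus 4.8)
The plan is to prove the two implications separately. The forward direction (UMAP $\Rightarrow$ $K(X)$ is a u-ideal) will be obtained by lifting an unconditional approximating sequence for $X$ to left-multiplication operators on $B(X)$ and passing to a weak-$*$ limit of their adjoints; this argument uses neither reflexivity nor the approximation property. The converse is the delicate direction, and it is where the hypotheses ``reflexive'' and ``AP'' get used: I would first identify $B(X)$ with $K(X)^{**}$, reduce the statement to the UMAP of $K(X)$, and then transfer it down to $X$, which sits $1$-complemented in $K(X)$.

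For the forward implication, suppose $X$ has the UMAP. By the characterisation of the UMAP recalled above (Casazza--Kalton, \cite{C1991}), fix an approximating sequence $\{S_n\}_{n=1}^\infty$ on $X$ with $S_mS_n=S_n$ for $n<m$, $\sup_n\|S_n\|<\infty$, $S_n\to id_X$ strongly and $\|id_X-2S_n\|\to 1$. Each $S_n$ is finite rank, so the maps $L_n\colon B(X)\to B(X)$, $L_n(T)=S_nT$, take values in $K(X)$, and since $(id_{B(X)}-2L_n)(T)=(id_X-2S_n)\circ T$ one has $\|id_{B(X)}-2L_n\|\le\|id_X-2S_n\|\to 1$. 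Put $M_n=(id_{B(X)}-2L_n)^{*}$ on $B(X)^{*}$. Then $\|M_n\|\le\|id_X-2S_n\|\to1$; for $\varphi\in K(X)^{\perp}$ one computes $M_n\varphi(T)=\varphi(T-2S_nT)=\varphi(T)$ because $S_nT\in K(X)$, so $M_n$ fixes $K(X)^{\perp}$ pointwise; and for a compact operator $K$, uniform boundedness of $\{S_n\}$ together with strong convergence gives $\|S_nK-K\|\to0$, whence $M_n\varphi(K)=\varphi(K)-2\varphi(S_nK)\to-\varphi(K)$ for every $\varphi\in B(X)^{*}$. By weak-$*$ compactness of balls and Tychonoff's theorem the bounded sequence $\{M_n\}$ has a subnet converging in the point-weak-$*$ topology to a bounded linear operator $Q$ on $B(X)^{*}$, and weak-$*$ lower semicontinuity of the norm gives $\|Q\|\le1$; by the two computations, $Q\varphi=\varphi$ on $K(X)^{\perp}$ and $Q\varphi|_{K(X)}=-\varphi|_{K(X)}$ for every $\varphi$. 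Now set $P:=\tfrac12(id_{B(X)^{*}}+Q)$: then $P\varphi|_{K(X)}=0$, i.e.\ $P\varphi\in K(X)^{\perp}$, for every $\varphi$, while $P\varphi=\varphi$ for $\varphi\in K(X)^{\perp}$, so $P$ is a projection of $B(X)^{*}$ onto $K(X)^{\perp}$; and $id-2P=-Q$, so $\|id-2P\|=\|Q\|\le1$, hence $=1$ (the reverse inequality being clear from $\varphi\in K(X)^{\perp}$). Thus $K(X)$ is a u-ideal in $B(X)$.

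For the converse, suppose $K(X)$ is a u-ideal in $B(X)$. Because $X$ is separable, reflexive and has the AP, I would use Grothendieck's theorem \cite{G1955} to conclude that $X$ and $X^{*}$ have the MAP, and the tensor-product theory to conclude that $K(X)$ is separable with the MAP and, most importantly, that $B(X)=K(X)^{**}$ isometrically, with $K(X)^{\perp}$ the usual annihilator of $K(X)$ inside $K(X)^{***}=B(X)^{*}$. Hence $K(X)$ is a separable u-ideal, with the MAP, in its own bidual; by the theory of unconditional ideals (Casazza--Kalton \cite{C1991}, and Godefroy--Kalton--Saphar) such a u-ideal is strict, so one may take its u-ideal projection to be the canonical projection $\rho\colon K(X)^{***}\to K(X)^{*}$, i.e.\ $\|id_{K(X)^{***}}-2\rho\|=1$; together with separability and the MAP this forces $K(X)$ to have the UMAP. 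Finally, picking a norming pair $x_0^{*}\in S_{X^{*}}$, $x_0\in S_X$ with $x_0^{*}(x_0)=1$, the maps $x\mapsto x_0^{*}\otimes x$ and $T\mapsto Tx_0$ present $X$ as a $1$-complemented subspace of $K(X)$; compressing a UMAP approximating sequence of $K(X)$ to this copy of $X$, followed by a routine blocking to restore the relation $S_mS_n=S_n$ and a diagonalisation over the unconditionality constant, produces an approximating sequence on $X$ with $\|id_X-2S_n\|\to1$, so $X$ has the UMAP by the characterisation above.

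I expect the converse to be the main obstacle, for two reasons. First, establishing $B(X)=K(X)^{**}$ and the automatic strictness of a separable u-ideal in its bidual (so that the given u-ideal projection may be replaced by the canonical one) is the step that genuinely consumes ``reflexive'' and ``AP''. Second, transferring the UMAP from $K(X)$ down to the $1$-complemented copy of $X$ without destroying the telescoping structure of an approximating sequence requires the standard blocking lemma, since the UMAP is not transparently inherited by complemented subspaces at the level of the defining sequences. The forward implication is, by contrast, essentially a diagram chase once the point-weak-$*$ cluster point is in hand.
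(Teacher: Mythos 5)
First, note that the paper does not prove this statement at all: it is quoted verbatim from Casazza--Kalton \cite{C1991} as a known result, so there is no in-paper argument to compare yours against. Judged on its own merits, your forward implication is correct and is essentially the standard argument: left multiplication $L_n(T)=S_nT$ maps $B(X)$ into $K(X)$, $\|id_{B(X)}-2L_n\|\leq\|id_X-2S_n\|\to1$, the adjoints fix $K(X)^\perp$ and tend to $-\varphi$ on $K(X)$ (using that $\|S_nK-K\|\to0$ for compact $K$), and a point-weak$^\ast$ cluster point yields the u-ideal projection. This direction indeed needs neither reflexivity nor the AP.

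The converse, however, contains a genuine gap at the step ``$K(X)$ is a separable u-ideal in its bidual with the MAP, hence strict, hence has the UMAP.'' There is no such general principle, and as stated it is false: every reflexive space $Z$ is trivially a \emph{strict} u-ideal in its own bidual ($Z^{\perp}=\{0\}$ in $Z^{***}=Z^*$, the canonical projection is the identity, and $\|id-2\,id\|=1$), yet separable reflexive spaces with the MAP need not have the UMAP. For instance, the Schatten class $S_p$ ($1<p<\infty$, $p\neq2$) is separable, reflexive and has the MAP, but fails local unconditional structure and therefore cannot embed complementably into a space with a $1$-UFDD; by the Godefroy--Kalton theorem reproduced as Theorem 2.15 of this paper, it fails the UMAP (indeed every $\lambda$-UBAP). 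So the implication you invoke would already be refuted by applying it to $Z=S_p$, and nothing about $Z=K(X)$ rescues it. The actual Casazza--Kalton/Godefroy--Kalton--Saphar argument does not pass through ``$K(X)$ has UMAP'' at all: one restricts the u-ideal structure to $\operatorname{span}(K(X),id_X)$, applies the three-ball intersection characterisation (Theorem 3.29/\ref{BIP} in this paper) at the point $id_X$ to produce compact operators $K$ with $\|id_X-2K\|\leq1+\varepsilon$ that approximate $id_X$ on prescribed finite sets, upgrades compact to finite rank using the AP, and then blocks to obtain an approximating sequence with $\|id_X-2S_n\|\to1$; reflexivity is what identifies $B(X)$ with $K(X)^{**}$ and controls the weak$^\ast$ limits in this local argument. (Your final step --- compressing a UMAP sequence to the $1$-complemented copy of $X$ --- would in fact be fine via the $\lim_n\|id-2S_n\|=1$ characterisation, but it is moot because the preceding step fails.)
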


Also, u-ideals can be characterized by the ball intersection property.

\begin{thm}[\cite{LA2007}]\label{BIP}
Let $X$ be a closed subspace of a Banach space $Y$ and let $y \in Y \backslash X$ and $Z=\operatorname{span}(X,\{y\})$. The following statements are equivalent.
\begin{enumerate}
\item[(1)] $X$ is a u-ideal in $Z$.
\item[(2)] $X^{\perp \perp} \bigcap_{x \in X} B_{Z^{* *}}(y+x,\|y-x\|) \neq \emptyset$.
\item[(3)] $X \cap \bigcap_{i=1}^n B_Z\left(y+x_i,\left\|y-x_i\right\|+\varepsilon\right) \neq \emptyset$ for every finite collection $\left(x_i\right)_{i=1}^n \subseteq X$ and $\varepsilon>0$.
\item[(4)] $X \cap \bigcap_{i=1}^3 B_Z\left(y+x_i,\left\|y-x_i\right\|+\varepsilon\right) \neq \emptyset$ for every collection of three points $\left(x_i\right)_{i=1}^3 \subseteq X$ and $\varepsilon>0$.
\end{enumerate}
\end{thm}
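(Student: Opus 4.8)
The plan is to establish the pairwise equivalences $(1)\Leftrightarrow(2)$, $(2)\Leftrightarrow(3)$ and $(3)\Leftrightarrow(4)$, which together yield all the implications and isolate the single genuinely hard arrow. For $(1)\Rightarrow(2)$ I would pass to the bidual: let $P$ be a projection on $Z^{*}$ witnessing that $X$ is a u-ideal in $Z$, normalised so that $\ker P=X^{\perp}$ (this is the one-dimensional space $X^{\perp}$, since $Z=\operatorname{span}(X,\{y\})$ with $y\notin X$). The adjoint $R:=P^{*}$ is then a $w^{*}$-continuous projection on $Z^{**}$ with range $(\ker P)^{\perp}=X^{\perp\perp}$ and $\|\mathrm{id}_{Z^{**}}-2R\|=\|\mathrm{id}_{Z^{*}}-2P\|=1$. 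Put $\eta:=2R\hat{y}\in X^{\perp\perp}$. For $x\in X$ one has $\hat{x}\in X^{\perp\perp}=\mathrm{ran}\,R$, hence $R\hat{x}=\hat{x}$, and applying the inequality $\|\mathrm{id}-2R\|\le1$ to the vector $\widehat{y-x}$, together with the identity $\widehat{y-x}-2R(\widehat{y-x})=\widehat{y+x}-2R\hat{y}=\widehat{y+x}-\eta$, gives $\|\eta-\widehat{y+x}\|\le\|y-x\|$, which is exactly $(2)$.

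For the converse $(2)\Rightarrow(1)$, fix $\phi\in Z^{*}$ spanning $X^{\perp}$ with $\phi(y)=1$, so that $X^{\perp\perp}=\{z^{**}\in Z^{**}:z^{**}(\phi)=0\}$ and $\eta(\phi)=0$. Set $e:=\hat{y}-\tfrac12\eta$ (so $e(\phi)=1$) and $P\xi:=\xi-\langle e,\xi\rangle\phi$; a direct check shows $P$ is a projection on $Z^{*}$ with $\ker P=X^{\perp}$. To verify $\|\mathrm{id}-2P\|\le1$ one evaluates $(\mathrm{id}-2P)\xi$ on an arbitrary $x+ty\in B_{Z}$ (with $x\in X$), writes $t\eta(\xi)=t\xi(y)+t\xi(x')+t\bigl(\eta(\xi)-\xi(y)-\xi(x')\bigr)$ for a free parameter $x'\in X$, takes $x'=-x/t$ when $t\neq0$, and bounds the remainder via $|\eta(\xi)-\xi(y)-\xi(x')|=|\langle\eta-\widehat{y+x'},\xi\rangle|\le\|y-x'\|\,\|\xi\|$; the outcome is $|(\mathrm{id}-2P)\xi(x+ty)|\le\|\xi\|\,\|x+ty\|$. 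Since $\|\mathrm{id}-2P\|\ge1$ always holds for a nontrivial idempotent, equality follows and $X$ is a u-ideal in $Z$.

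Next, $(2)\Leftrightarrow(3)$. For $(2)\Rightarrow(3)$ I would use the principle of local reflexivity: given $x_{1},\dots,x_{n}\in X$ and $\varepsilon>0$, apply it to $E=\operatorname{span}\{\eta,\hat{y},\hat{x}_{1},\dots,\hat{x}_{n}\}\subseteq Z^{**}$ and $F=\operatorname{span}\{\phi\}\subseteq Z^{*}$ to obtain a $(1+\varepsilon)$-isomorphism $T:E\to Z$ fixing $\hat{y}$ and the $\hat{x}_{i}$ with $\langle T\eta,\phi\rangle=\langle\eta,\phi\rangle=0$; then $x_{0}:=T\eta\in X$ and $\|x_{0}-(y+x_{i})\|=\|T(\eta-\widehat{y+x_{i}})\|\le(1+\varepsilon)\|y-x_{i}\|$, giving $(3)$ after rescaling $\varepsilon$. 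For $(3)\Rightarrow(2)$, each set $K_{F,\varepsilon}:=X^{\perp\perp}\cap\bigcap_{x\in F}B_{Z^{**}}(y+x,\|y-x\|+\varepsilon)$ (with $F\subseteq X$ finite and $\varepsilon>0$) is $w^{*}$-compact, being a bounded intersection of $w^{*}$-closed sets, and is nonempty by $(3)$ (the point $x_{0}$ it furnishes satisfies $\hat{x}_{0}\in K_{F,\varepsilon}$); this family is closed under finite intersections, so $\bigcap_{F,\varepsilon}K_{F,\varepsilon}\neq\emptyset$, and any point in it witnesses $(2)$.

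Finally, $(3)\Leftrightarrow(4)$, where $(3)\Rightarrow(4)$ is trivial and $(4)\Rightarrow(3)$ is the main obstacle: it is the u-ideal counterpart of the restricted $3$-ball property for M-ideals and requires reducing an arbitrary finite family of balls to three. The structural fact to exploit is that every ball $B_{Z}(y+x_{i},\|y-x_{i}\|)$ has the single point $2y$ on its boundary, so the whole family passes through the common point $2y\notin X$; one then argues by induction on $n$, at each step replacing two of the balls by one whose intersection with $X$ still certifies the smaller instance, with the case $n=3$ as the engine. I expect this merging step — choosing the replacement centre and radius so that all the required approximate $X$-intersections, with the correct $\varepsilon$-losses, propagate through the induction — to carry essentially all the difficulty; once the adjoint and local-reflexivity machinery of the earlier steps is in place, the remaining implications are routine.
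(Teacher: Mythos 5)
First, a point of comparison: the paper gives no proof of this theorem at all --- it is quoted verbatim from Lima--Lima \cite{LA2007} as an imported result --- so your attempt can only be judged on its own merits. Your treatment of $(1)\Leftrightarrow(2)$, $(2)\Leftrightarrow(3)$ and $(3)\Rightarrow(4)$ is essentially correct and standard: the adjoint computation $(\mathrm{id}-2P^{*})(\widehat{y-x})=\widehat{y+x}-2P^{*}\hat{y}$ does yield $(2)$ from $(1)$; the rank-one perturbation $P\xi=\xi-\langle e,\xi\rangle\phi$ with $e=\hat{y}-\tfrac12\eta$ (using that $X^{\perp}$ is one-dimensional because $Z/X$ is one-dimensional) gives the converse, and your substitution $x'=-x/t$ correctly produces the bound $\|(\mathrm{id}-2P)\xi\|\le\|\xi\|$; local reflexivity and the $w^{*}$-compactness/finite-intersection argument for $(2)\Leftrightarrow(3)$ are both sound.

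The genuine gap is $(4)\Rightarrow(3)$, which you explicitly leave as a hope rather than a proof. This is not a routine loose end: the reduction from three balls to $n$ balls is the entire content and raison d'\^etre of the cited paper (its title is ``A three-ball intersection property for u-ideals''), and the observation that all the balls $B_{Z}(y+x_i,\|y-x_i\|)$ pass through $2y$ does not by itself furnish the inductive ``merging'' of two balls into one. In Lima--Lima the argument does not proceed by merging balls at all; it runs $(4)\Rightarrow(2)$ via a $w^{*}$-compactness and Helly-type/separation argument in $Z^{**}$, exploiting that $\dim(Z/X)=1$ so that the relevant convex sets live, modulo $X^{\perp\perp}$, in a low-dimensional quotient where a three-set intersection condition already forces the full finite-intersection property. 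Without some such mechanism your induction has no engine, so as written the proposal does not establish the theorem; it establishes only the equivalence of $(1)$, $(2)$, $(3)$ and the trivial half of $(4)$.
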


%\begin{thm}[\cite{LA2007}]\label{thm3.29}
%Let $X$ be a closed subspace of a Banach space $Y$. Then the following statements are equivalent. Define $H_n(X)=\{(x_1, \cdots,x_n): \{x_i\}_{i=1}^n \subseteq X \text{ and } \sum_{i=1}^{n}x_i=0\}$.
%\begin{enumerate}
%\item[(1)] $X$ is a u-ideal in $Y$.
%\item[(2)] For every $n \in \N_+$ and $y \in H_n(Y)$, we have
%\[H_n(X) \cap \bigcap_{i=1}^m B_{\ell_1^n(Y)}(y+x_i,\|y-x_i\|+\varepsilon)\neq \emptyset\]
%for every finite collection $\{x_i\}_{i=1}^m \subseteq \ell_1^n(X)$ and $\varepsilon>0$.
%\item[(3)] For every $y \in H_3(Y)$, we have
%\[H_3(X) \cap \bigcap_{i=1}^3 B_{\ell_1^3(Y)}(y+x_i,\|y-x_i\|+\varepsilon)\neq \emptyset\]
%for every collection of three points $\{x_i\}_{i=1}^3 \subseteq \ell_1^3(X)$ and $\varepsilon>0$.
%\end{enumerate}
%\end{thm}

Thus we get the following corollary.
\begin{cor}\label{u-ideal and BIP}
  Let $X$ and $Y$ be separable reflexive Banach spaces with the approximation property.
  \begin{enumerate}
    \item[(1)]   If either $K(X)$ is a u-ideal in $B(X)$ or $K(Y)$ is a u-ideal in $B(Y)$,  then for all $1/2 < \alpha \leq 1$, the renormed space $Z_\alpha=\left(B(X,Y), \|\cdot\|_\alpha \right)$ has the $(2\alpha,2\alpha-1-\sigma)$-UBCP for all $0<\sigma<2\alpha-1$.
    \item[(2)] If either $K(X)$ or $K(Y)$ has the ball intersection property defined in Theorem \ref{BIP}, then for all $1/2 < \alpha \leq 1$, the renormed space $Z_\alpha=\left(B(X,Y), \|\cdot\|_\alpha \right)$ has the $(2\alpha,2\alpha-1-\sigma)$-UBCP for all $0<\sigma<2\alpha-1$.
  \end{enumerate}
\end{cor}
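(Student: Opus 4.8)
The plan is to recognize that, for a separable reflexive Banach space $W$ with the approximation property, each of the two hypotheses imposed on $K(W)$ is merely a reformulation of the statement that $W$ has the UMAP, and then to feed this into the symmetric estimate of Theorem \ref{thm3.19} with the single parameter choice $\varepsilon=1$.

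First I would dispose of part (1). Let $W$ denote whichever of $X$, $Y$ satisfies the hypothesis; by assumption $W$ is separable, reflexive and has the approximation property. By the theorem of Casazza and Kalton \cite{C1991} quoted above, $K(W)$ is a u-ideal in $B(W)$ if and only if $W$ has the UMAP, that is, the $1$-UBAP. In particular $W$ has the $(2-\varepsilon)$-UBAP for $\varepsilon=1$ (indeed for every $\varepsilon\in(0,1]$). Now apply Theorem \ref{thm3.19} with $\varepsilon=1$: the range restriction $1-\varepsilon/2<\alpha\le 1$ becomes $1/2<\alpha\le 1$, and the conclusion ``$(2\alpha,\,2\alpha+\varepsilon-2-\sigma)$-UBCP for all $0<\sigma<2\alpha+\varepsilon-2$'' becomes precisely the $(2\alpha,\,2\alpha-1-\sigma)$-UBCP for all $0<\sigma<2\alpha-1$. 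This is exactly the assertion of part (1).

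For part (2) the argument is identical once one observes that ``$K(W)$ has the ball intersection property of Theorem \ref{BIP}'' is equivalent to ``$K(W)$ is a u-ideal in $B(W)$''. Indeed, Theorem \ref{BIP} (from \cite{LA2007}) establishes the equivalence of the u-ideal property with the ball-intersection formulations $(3)$ and $(4)$; combining this with the fact that these ideal properties are of local character — they are determined by the one-dimensional extensions $\operatorname{span}(K(W),\{y\})$ with $y\in B(W)\setminus K(W)$ — one again arrives at the UMAP of $W$, and the conclusion follows from Theorem \ref{thm3.19} as in part (1).

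I expect the only delicate point to be this last bookkeeping in part (2): matching the ``local'' ball-intersection conditions of Theorem \ref{BIP}, which are phrased inside a single one-dimensional extension $Z=\operatorname{span}(X,\{y\})$, with the ``global'' statement that $K(W)$ is a u-ideal in all of $B(W)$; this is where one invokes the localization property of u-ideals. Everything else is a direct substitution $\varepsilon=1$ into Theorem \ref{thm3.19}, so no new quantitative estimates are needed.
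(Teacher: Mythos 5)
Your proposal is correct and follows exactly the route the paper intends: translate the u-ideal hypothesis into the UMAP via the quoted Casazza--Kalton theorem, translate the ball intersection property into the u-ideal property via Theorem \ref{BIP}, and then substitute $\varepsilon=1$ into Theorem \ref{thm3.19}. Your remark on reconciling the one-codimensional formulation of Theorem \ref{BIP} with the global u-ideal statement is in fact more careful than the paper, which states the corollary without proof immediately after the two cited theorems.
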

For classsical sequence space, by Corollary \ref{u-ideal and BIP}, we have the following results.
\begin{cor}\label{cor3.31}
  $B(\ell_p(F_n),\ell_q(H_n))$ has the UBCP for all finite-dimensional Banach space sequences $\{F_n\}_{n=1}^\infty$ and $\{H_n\}_{n=1}^\infty$ where $1< p,q <\infty$.
\end{cor}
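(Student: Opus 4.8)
The plan is to reduce the statement to Corollary \ref{u-ideal and BIP}(1) by exhibiting a u-ideal structure in one of the relevant operator algebras, the only real work being to check that the coordinate spaces satisfy the hypotheses. First I would record the structural facts about the coordinate spaces. For $1<p<\infty$ the space $X=\ell_p(F_n)$ is separable (finite sequences with entries in a fixed countable dense subset of each $F_i$ are dense), reflexive (an $\ell_p$-sum with $1<p<\infty$ of the reflexive spaces $F_n$ is reflexive, since $\ell_p(F_n)^{\ast}=\ell_{p'}(F_n^{\ast})$ and iterating returns $\ell_p(F_n)$), and carries the canonical finite-dimensional decomposition $\{F_n\}_{n=1}^\infty$; in particular the partial-sum projections are finite rank and converge strongly to $id_X$, so $X$ has the approximation property. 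The same applies verbatim to $Y=\ell_q(H_n)$ for $1<q<\infty$.

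Next I would verify that $X=\ell_p(F_n)$ has the UMAP. Let $S_N$ be the projection onto $F_1\oplus\cdots\oplus F_N$ and $A_N=S_N-S_{N-1}$ the projection onto $F_N$ (with $S_0=0$). Then $\{S_N\}_{N=1}^\infty$ is an approximating sequence with $S_M S_N=S_N$ for $N<M$, and for every choice of signs $\theta_i=\pm1$ the operator $\sum_{i=1}^N\theta_i A_i$ sends $(x_n)_n$ to $(\theta_1 x_1,\dots,\theta_N x_N,0,\dots)$, whose norm equals $\big(\sum_{i=1}^N\|x_i\|_{F_i}^p\big)^{1/p}\le\|(x_n)\|$. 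Hence $\sup_{N\in\N_+,\theta_i=\pm1}\big\|\sum_{i=1}^N\theta_i A_i\big\|\le 1$, so by Definition \ref{UMAP} the space $X$ has the $1$-UMAP; the identical argument applies to $Y$. (Equivalently, $X$ and $Y$ have the $2$-RMAP.)

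Now I would invoke the Casazza--Kalton theorem quoted above relating the UMAP and u-ideals: since $X=\ell_p(F_n)$ is separable reflexive with the approximation property and has the UMAP, $K(X)$ is a u-ideal in $B(X)$ (and likewise $K(Y)$ is a u-ideal in $B(Y)$). Applying Corollary \ref{u-ideal and BIP}(1) to this pair $X,Y$ with $\alpha=1$ — noting that $\|\cdot\|_1=\|\cdot\|_{B(X,Y)}$, so $Z_1=B(X,Y)$ — we conclude that $B(\ell_p(F_n),\ell_q(H_n))$ has the $(2,1-\sigma)$-UBCP for every $0<\sigma<1$, and in particular the UBCP.

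There is no substantial obstacle here, the corollary being an assembly of earlier results; the one point that needs care is the verification in the second step that the natural FDD of $\ell_p(F_n)$ is $1$-unconditional and therefore yields the UMAP (so that the hypothesis of Corollary \ref{u-ideal and BIP} is met through the Casazza--Kalton theorem). Once that is in place, the conclusion is immediate, and one may alternatively bypass the u-ideal route entirely and apply the corollary following Theorem \ref{thm3.19} that gives the UBCP of $B(X,Y)$ whenever $X$ or $Y$ has the $\varepsilon$-RMAP for some $\varepsilon>1$.
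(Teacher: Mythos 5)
Your proposal is correct and follows essentially the same route as the paper, which derives this corollary directly from Corollary~\ref{u-ideal and BIP} (equivalently, from the UMAP of $\ell_p(F_n)$ via the Casazza--Kalton u-ideal theorem); you simply make explicit the routine verifications that $\ell_p(F_n)$ is separable, reflexive, has the approximation property, and that its canonical FDD is $1$-unconditional. The alternative you mention via the $2$-RMAP (i.e.\ $\|id-2S_n\|=1$ for the partial-sum projections) is likewise consistent with the paper's framework.
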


\begin{cor}
  Let $C_p$ denote the space $\ell_p(F_n)$ where $F_n$ is a sequence of finite-dimensional Banach spaces dense in the Banach-Mazur sense in the collection of all finite-dimensional Banach spaces. Then $B(C_p,C_q)$ has the UBCP for $1< p,q <\infty$.
\end{cor}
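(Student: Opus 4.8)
The plan is to observe that the spaces $C_p$ and $C_q$ fall directly within the scope of Corollary~\ref{cor3.31}. Indeed, by its very definition $C_p=\ell_p(F_n)$, where $\{F_n\}_{n=1}^\infty$ is a sequence of finite-dimensional Banach spaces chosen dense in the Banach--Mazur sense, and similarly $C_q=\ell_q(H_n)$ for such a sequence $\{H_n\}_{n=1}^\infty$. Since $1<p,q<\infty$, Corollary~\ref{cor3.31} applies to the pair $\bigl(\{F_n\},\{H_n\}\bigr)$ and yields at once that $B(C_p,C_q)=B(\ell_p(F_n),\ell_q(H_n))$ has the UBCP. So at the level of this paper the corollary is an immediate specialization, and the ``proof'' is one line.

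For completeness one can also run the argument from scratch. First note that $C_p$ is separable and, since $1<p<\infty$ and each $F_n$ is finite-dimensional, reflexive; its natural finite-dimensional decomposition $\{F_n\}_n$ is $1$-unconditional because the $\ell_p$-sum norm is unchanged when the blocks are multiplied by signs. Hence the partial-sum projections $S_n$ are contractive and satisfy $\|id_{C_p}-2S_n\|=1$, so by the Casazza--Kalton theorem quoted above $C_p$ has the UMAP; in particular $C_p$ has the approximation property, and therefore $K(C_p)$ is a u-ideal in $B(C_p)$ by the corresponding Casazza--Kalton theorem. The same facts hold for $C_q$. Now Corollary~\ref{u-ideal and BIP}(1), applied with $X=C_p$ and $Y=C_q$, shows that for every $1/2<\alpha\le 1$ the renormed space $Z_\alpha=(B(C_p,C_q),\|\cdot\|_\alpha)$ has the $(2\alpha,2\alpha-1-\sigma)$-UBCP for all $0<\sigma<2\alpha-1$; taking $\alpha=1$ gives the UBCP of $B(C_p,C_q)$.

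There is essentially no obstacle here, since the statement is a specialization of Corollary~\ref{cor3.31}. The only points that deserve a line of verification are the structural facts about $C_p$ and $C_q$ --- reflexivity of the $\ell_p$-sum of finite-dimensional spaces for $1<p<\infty$, and the $1$-unconditionality of the canonical FDD --- both of which are routine consequences of the definition of the $\ell_p$-sum, and both of which are already implicitly used in the proof of Corollary~\ref{cor3.31}.
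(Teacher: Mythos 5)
Your proposal is correct and takes essentially the same route as the paper: the statement is an immediate specialization of Corollary~\ref{cor3.31} to the particular dense-in-Banach--Mazur sequences defining $C_p$ and $C_q$, which is all the paper intends. Your supplementary ``from scratch'' argument (the canonical FDD of $\ell_p(F_n)$ is $1$-unconditional, hence $C_p$ is separable, reflexive, and has the UMAP, so Corollary~\ref{u-ideal and BIP}(1) applies with $\alpha=1$) is exactly the chain underlying Corollary~\ref{cor3.31} itself, so nothing further is needed.
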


%\begin{cor}
%$B((\oplus_{n=1}^\infty F_n )_{\ell_{p_n}},(\oplus_{n=1}^\infty H_n)_{\ell_{q_n}})$ has the UBCP for all finite-dimensional Banach space sequences $\{F_n\}_{n=1}^\infty$ and $\{H_n\}_{n=1}^\infty$ where $\{p_k,q_k\}_{k=1}^\infty \subseteq (1,\infty)$.
%\end{cor}

\begin{exa}
  In \cite{S1987}, Szarek constructed a Banach space $\ell_2(W_n)$ which has the $1$-UFDD thus has the $1$-UBAP but fails to have a basis, where $W_n$ is a sequence of normed spaces. By Corollary \ref{cor3.31}, if $W_n$ is a sequence of finite-dimensional Banach spaces, then $B(\ell_2(W_n))$ has the UBCP.
\end{exa}

In \cite{LGK1996}, Godefroy, Kalton and Li gave a description of the subspace with the UMAP of $L^1[0,1]$.

\begin{thm}[\cite{LGK1996}]
  Let $X$ be a closed subspace of $L^1[0,1]$ with the approximation property. The following assertions are equivalent.
  \begin{enumerate}
    \item[(1)] The unit ball of $X$ is compact locally convex in the topology of convergence in measure.
    \item[(2)] $X$ has the UMAP and the $1$-strong Schur property.
    \item[(3)] For any $\varepsilon>0$, there is a quotient $Y_\varepsilon$ of $c_0$ such that
    $d(X, Y_{\varepsilon}^\ast) < 1 + \varepsilon$.
  \end{enumerate}
\end{thm}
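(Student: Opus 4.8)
The plan is to prove the circle $(3)\Rightarrow(1)\Rightarrow(2)\Rightarrow(3)$, reserving the $\lambda$-UBAP and block unconditional Schauder frame machinery of Section~2 for the structural step $(2)\Rightarrow(3)$. For $(3)\Rightarrow(1)$: if $Y_\varepsilon$ is a quotient of $c_0$ then $Y_\varepsilon^\ast$ is isometric to a weak$^\ast$-closed subspace of $\ell_1=c_0^\ast$, so $B_{Y_\varepsilon^\ast}$ is weak$^\ast$-compact and on it weak$^\ast$-convergence is coordinatewise. Transporting this topology onto $B_X$ through a $(1+\varepsilon)$-isomorphism and using the nearly disjointly supported $\ell_1$-blocks that $Y_\varepsilon^\ast\subseteq\ell_1$ induces inside $L^1$, one checks that on the bounded set $B_X$ the transported weak$^\ast$-topology coincides with the topology of convergence in measure, so the latter makes $B_X$ compact and locally convex; letting $\varepsilon\to0$ and intersecting over a sequence of approximating quotients gives $(1)$ for $X$ itself.

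For $(1)\Rightarrow(2)$: from the compact convex set $K=B_X$ in the (now locally convex) measure topology I would realize $X$ as a dual space $X=E^\ast$, $E$ being the predual of measure-continuous affine functions on $K$, and then read the two conclusions off the Choquet structure of $K$ together with the geometry of the $L^1$-norm. The $L^1$-modulus measures near-disjointness of functions, and this should yield (i) an approximating sequence $\{S_n\}$ with $\lim_n\|id_X-2S_n\|=1$, whence $X$ has the UMAP by the Casazza--Kalton theorem recalled above, and (ii) the $1$-strong Schur property, by showing that a normalized $(2-\delta)$-separated sequence in $X$ admits, after a subsequence and a small perturbation inside $K$, an essentially disjointly supported replacement, which is then $(1+\delta')$-equivalent to the $\ell_1$-basis.

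For $(2)\Rightarrow(3)$: by the extended Godefroy--Kalton theorem above, the UMAP lets me place $X$ $(1+\varepsilon)$-complementably in a space with a $1$-UFDD and thereby decompose $X\subseteq L^1$ into finite-dimensional blocks $E_k$ with $(1+\varepsilon)$-unconditional joint behaviour; since each $E_k$ is a finite-dimensional subspace of $L^1$, it is $(1+\varepsilon)$-isometric to a subspace of some $\ell_1^{m_k}$, so $E_k^\ast$ is a $(1+\varepsilon)$-quotient of $\ell_\infty^{m_k}$. A normalized block sequence cannot have a uniformly integrable subsequence (by the Schur property consequence of $(2)$ it would norm-converge to a vector killed by every block, hence to $0$), so by the Kadec--Pe\l czy\'nski dichotomy it has, after a subsequence, essentially disjoint supports in $L^1$, hence is nearly $2$-separated, and the $1$-strong Schur property then makes it $(1+\varepsilon)$-equivalent to the $\ell_1$-basis. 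A standard skipped-blocking and diagonalization argument upgrades this to: after blocking, the decomposition is an $\ell_1$-FDD, so $X$ is $(1+\varepsilon)$-isometric to $\left(\bigoplus_k E_k\right)_{\ell_1}=Z^\ast$, where $Z:=\left(\bigoplus_k E_k^\ast\right)_{c_0}$. As the $c_0$-sum of the $\ell_\infty^{m_k}$'s is $c_0$, the $c_0$-sum of the quotient maps $\ell_\infty^{m_k}\twoheadrightarrow E_k^\ast$ exhibits $Z$ as a $(1+\varepsilon)$-quotient $Y_\varepsilon$ of $c_0$, giving $d(X,Y_\varepsilon^\ast)<1+\varepsilon$.

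The main obstacle will be $(1)\Rightarrow(2)$: extracting the exact isometric data --- that the UMAP constant is $1$ and the strong Schur constant is optimal --- from a purely topological hypothesis while never losing the ambient $L^1$-structure; in particular the passage from a separated sequence to an almost disjointly supported one is a Kadec--Pe\l czy\'nski-type splitting that must be run inside the measure-compact convex set $K$. A secondary difficulty is bookkeeping: in $(2)\Rightarrow(3)$ the many $\varepsilon$-losses --- the UFDD approximation, the blocking needed to reach an $\ell_1$-FDD, the identification of $E_k$ inside $\ell_1^{m_k}$, and the gluing of the quotient maps --- have to be consolidated so that the final Banach--Mazur estimate still stays below $1+\varepsilon$.
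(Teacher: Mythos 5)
First, a point of comparison that matters: the paper does not prove this theorem. It is stated with the citation \cite{LGK1996} and imported as a black box from Li, Godefroy and Kalton's paper \emph{On subspaces of $L^1$ which embed into $\ell_1$}; the only thing the present paper does with it is feed implication $(1)\Rightarrow(2)$ into its own UBCP machinery to obtain the corollary that follows. So there is no in-paper proof to measure your proposal against, and what you have written must stand on its own as a reconstruction of a substantial external result.

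Judged that way, the architecture $(3)\Rightarrow(1)\Rightarrow(2)\Rightarrow(3)$ is reasonable and your $(2)\Rightarrow(3)$ sketch (UMAP $\to$ $1$-UFDD via the extended Godefroy--Kalton theorem, almost-isometric embedding of finite-dimensional subspaces of $L^1$ into $\ell_1^{m_k}$, Kadec--Pe{\l}czy\'nski plus the $1$-strong Schur property to force an $\ell_1$-FDD, then dualize to a quotient of $c_0$) is the most credible part. But the other two legs have genuine gaps. In $(3)\Rightarrow(1)$, the topology of convergence in measure on $B_X$ depends on the concrete position of $X$ inside $L^1[0,1]$ and is not an isomorphic invariant, so transporting the weak$^\ast$ topology of $B_{Y_\varepsilon^\ast}\subseteq B_{\ell_1}$ through a $(1+\varepsilon)$-isomorphism says nothing a priori about convergence in measure; your appeal to ``the nearly disjointly supported $\ell_1$-blocks that $Y_\varepsilon^\ast\subseteq\ell_1$ induces inside $L^1$'' assumes precisely what must be proved, namely that the isomorphism carries coordinate blocks to almost disjointly supported functions. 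Worse, $(1)$ is an exact topological statement about $X$ itself, and ``letting $\varepsilon\to 0$ and intersecting over a sequence of approximating quotients'' is not an argument: compactness and local convexity in measure do not visibly pass to a space that is merely $(1+\varepsilon)$-isomorphic, for every $\varepsilon$, to spaces enjoying an analogous property. In $(1)\Rightarrow(2)$, which you yourself flag as the main obstacle, the Choquet-theoretic duality for $K=B_X$ does not by itself produce an approximating sequence with $\lim_n\|id_X-2S_n\|=1$, nor the optimal strong Schur constant; the known proof goes through the theory of nicely placed subspaces (unit ball closed in measure), the subsequence splitting lemma in $L^1$, and $L$-projections in the bidual, none of which appear in your outline. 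As written, two of the three implications are programmes rather than proofs, so the proposal cannot be accepted as a proof of the theorem --- though, to be fair, the paper under review never intended to supply one either.
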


Therefore, we get the following sufficient condition for $B(X,Y)$ to have the UBCP.
\begin{cor}
  Let $X$ be a Banach space with $X^\ast$ separable and $Y$ be a closed subspace of $L^1[0,1]$ with the approximation property. If the unit ball of $Y$ is compact locally convex in the topology of convergence in measure, then $B(X,Y)$ has the UBCP.
\end{cor}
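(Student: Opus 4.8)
The plan is to extract the unconditional metric approximation property for $Y$ from the Godefroy--Kalton--Li theorem quoted just above, and then feed it into the operator-space machinery already developed in this section. First I would record the ambient structure: since $Y$ is a closed subspace of $L^1[0,1]$, which is separable, $Y$ is itself separable; together with the assumed approximation property of $Y$, this means the Godefroy--Kalton--Li theorem applies to $Y$. The hypothesis that the unit ball of $Y$ is compact and locally convex in the topology of convergence in measure is precisely assertion~(1) of that theorem, so the implication $(1)\Rightarrow(2)$ yields that $Y$ has the UMAP (the $1$-strong Schur property produced there is not needed here).

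Next I would translate the UMAP into a form handled by the results above. Recall from the discussion preceding Definition~\ref{RMAP} that the UMAP coincides with the $2$-RMAP. Since $2>1$ and both $X^\ast$ and $Y$ are separable, the corollary stating that $B(X,Y)$ has the UBCP whenever $X$ or $Y$ has the $\varepsilon$-RMAP for some $\varepsilon>1$ applies directly to $Y$ and gives the conclusion. Alternatively, one may route the argument through Theorem~\ref{thm3.19}: the UMAP is the $1$-UBAP, hence in particular the $(2-\varepsilon)$-UBAP for every $0<\varepsilon\le 1$, and taking $\alpha=1$ in Theorem~\ref{thm3.19} shows that $B(X,Y)=Z_1$ has the $(2,\varepsilon-\sigma)$-UBCP for all $0<\sigma<\varepsilon$, so in particular $B(X,Y)$ has the UBCP.

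There is essentially no analytic obstacle here: the proof is a chain of results already available. The two points that deserve a line of justification are the inheritance of separability of $Y$ from $L^1[0,1]$ (needed so that both the Godefroy--Kalton--Li theorem and the operator-space corollaries apply), and the essential use of the \emph{full} hypothesis on the unit ball of $Y$ --- it is exactly this that distinguishes the subspaces of $L^1[0,1]$ enjoying the UMAP from $L^1[0,1]$ itself, which has the MAP but fails the UMAP; without it the conclusion would not be available from the present methods.
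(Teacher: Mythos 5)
Your proposal is correct and follows essentially the same route as the paper: the corollary is intended as an immediate consequence of the Godefroy--Kalton--Li theorem (implication $(1)\Rightarrow(2)$ gives the UMAP of $Y$), after which the UMAP, being the $1$-UBAP and hence the $(2-\varepsilon)$-UBAP for $0<\varepsilon\le 1$, feeds into Theorem~3.19 (equivalently, the $2$-RMAP corollary) to yield the UBCP of $B(X,Y)$. Your added remarks on the separability of $Y$ and on why the full hypothesis on the unit ball is needed are accurate but do not change the argument.
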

Following Proposition 4.5 in \cite{LGK1996} which qualifies the UMAP, we obtain two spaces of bounded linear operators whose range spaces are concrete natural spaces have the UBCP.
\begin{exa}
 Let $Y$ be a separable predual of a von Neumann algebra. Then $Y$ has the UMAP if and only if $Y$ is isometric to $\ell_1(N(H_n))$, where $N(H_n)$ is a space of nuclear operators on the finite-dimensional Hilbert space $H_n$ for every $n$. Thus if $X$ is a Banach space with $X^*$ separable and $Y$ is isometric to $\ell_1(N(H_n))$, then $B(X,Y)$ has the UBCP.
\end{exa}

\begin{exa}
  There exist infinite subsets $\Lambda$ of $\Z$ such that $L_{\Lambda}^1(\mathbb{T})$ has the UMAP. Thus if $X$ is a Banach space with $X^\ast$ separable, then $B(X, L_{\Lambda}^1(\mathbb{T}))$ has the UBCP.
\end{exa}

\end{document}